\documentclass{scrartcl}

\usepackage[utf8]{inputenc}
\usepackage[T1]{fontenc}
\usepackage{lmodern}
\usepackage{microtype}
\usepackage{pifont}
\usepackage[toc,page]{appendix}

\usepackage[english]{babel}

\usepackage[l2tabu, orthodox, experimental]{nag}

\usepackage{hyperref}
\hypersetup{unicode}

\usepackage{csquotes}

\usepackage{bm}

\usepackage{enumitem}
\setenumerate[1]{label=\textup{(\alph*)}}
\setenumerate[2]{label=\textup{(\roman*)}}

\usepackage{xspace}

\usepackage{makeidx}

\usepackage[dvipsnames]{xcolor}

\makeatletter
\g@addto@macro\bfseries{\boldmath}
\makeatother

\usepackage{todonotes}

\usepackage{amsmath, amsfonts, amssymb, amsthm, mathtools}
\usepackage{cancel}
\usepackage{tikz, pgf}
\usetikzlibrary{calc, cd, shapes,arrows.meta,shapes.geometric,positioning,fit}

\usepackage{cleveref}
\crefname{theorem}{Theorem}{Theorems}
\crefname{lemma}{Lemma}{Lemmata}
\crefname{corollary}{Corollary}{Corollaries}
\crefname{definition}{Definition}{Definitions}


\renewcommand*{\vec}[1]{{\mathbf{#1}}}

\newcommand*{\dotcup}{\mathbin{\dot\cup}}

\newcommand{\N}{\mathbb{N}}

\newcommand*{\FO}{\mathrm{FO}}

\newcommand{\cequiv}[1]{\equiv_{C_{#1}}}
\newcommand{\IFPC}{\mathrm{FP+C}}

\newcommand{\bpk}{\text{BP}_k}
\newcommand{\bp}[1]{\text{BP}_{#1}}

\newcommand{\subd}[1]{{#1}^{(s,<)}}
\newcommand{\atp}{\text{atp}}



\newcommand{\dimwl}{\dim_{\mathrm{WL}}}


\DeclareMathOperator{\rw}{rw}
\DeclareMathOperator{\tww}{tww}
\newcommand*{\red}{{\mathrm{red}}}
\DeclareMathOperator{\rdeg}{red-deg}

\DeclareMathOperator{\rk}{rk}


\newcommand*{\multil}[1][]{#1\{\mskip-3mu#1\{}
\newcommand*{\multir}[1][]{#1\}\mskip-3mu#1\}}

\theoremstyle{definition}
\newtheorem{definition}{Definition}[section]

\theoremstyle{plain}
\newtheorem{lemma}[definition]{Lemma}

\newtheorem{theorem}[definition]{Theorem}	
\newtheorem{corollary}[definition]{Corollary}

\newtheorem*{claim}{Claim}

\newenvironment{claimproof}[1][\proofname]{\begin{proof}[#1]}{\end{proof}}
\newenvironment{proof_sketch}{\renewcommand{\proofname}{Proof sketch.}\proof}{\endproof}


\makeatletter
\let\@vareps\varepsilon
\let\varepsilon\epsilon
\let\epsilon\@vareps
\makeatother

\renewcommand{\phi}{\varphi}

\title{Weisfeiler-Leman on graphs of small twin-width}
\author{Irene Heinrich\thanks{TU Darmstadt. The author has received funding from the European Research Council (ERC) under the European Union’s Horizon 2020 research and innovation programme (EngageS: grant agreement No.\ 820148).
	Views and opinions expressed are however those of the author(s) only and do not necessarily reflect those of the European Union or the European Research Council. Neither the European Union nor the granting authority can be held responsible for them.}\\[1mm]
	Klara Pakhomenko\thanks{Hasselt University. This author is supported by the Flanders AI Research Program.}\and Moritz Lichter\thanks{RWTH Aachen University. The author received funding from the European Research Council (ERC)
		under the European Union’s Horizon 2020 research and innovation programme (SymSim: grant
		agreement No. 101054974). Views and opinions expressed are however those of the author(s) only and
		do not necessarily reflect those of the European Union or the European Research Council. Neither
		the European Union nor the granting authority can be held responsible for them.}\\[1mm] Simon Raßmann\thanks{TU Darmstadt.}}
\makeatletter
\hypersetup{pdftitle="\@title"}
\makeatother

\begin{document}
\maketitle

\begin{abstract}
Twin-width is a graph parameter introduced in the context of first-order model checking, and has since become a central
parameter in algorithmic graph theory. While many algorithmic problems become easier on arbitrary classes of bounded twin-width, graph isomorphism on graphs of twin-width~\(4\) and above is as hard as the general isomorphism problem.
For each positive number $k$, the $k$-dimensional Weisfeiler-Leman algorithm is an iterative color refinement algorithm that encodes structural similarities and serves as a fundamental tool for distinguishing non-isomorphic graphs.
We show that the graph isomorphism problem for graphs of twin-width~\(1\)
can be solved by the purely combinatorial \(3\)-dimensional Weisfeiler-Leman algorithm, while 
there is no fixed $k$ such that the $k$-dimensional
Weisfeiler-Leman algorithm solves the graph isomorphism problem for graphs of twin-width~\(4\).

Moreover, we prove the conjecture of Bergougnoux, Gajarský, Guspiel, Hlinený, Pokrývka, and Sokolowski that stable graphs of twin-width~\(2\) have bounded rank-width.
This in particular implies that isomorphism of these graphs can be decided by a fixed dimension of the Weisfeiler-Leman algorithm.
\end{abstract}

\section{Introduction}
Twin-width is a graph parameter introduced in~\cite{tww1}, and has since become a central parameter in algorithmic graph theory
due to its significance in model checking:
First-order model checking on graphs becomes fixed-parameter tractable when witnesses
of small twin-width are provided. This generalizes similar results for classical graph classes like classes of bounded tree-width, bounded rank-width,
bounded genus, classes excluding a minor, or map graphs. Even more strongly, bounded twin-width exactly describes the boundary of tractable first-order model checking
on ordered graphs~\cite{tww4}, tournaments~\cite{tww_tournaments}, interval graphs~\cite{tww8}, and permutation graphs~\cite{tww1}.

Twin-width is based on repeatedly merging pairs of vertices with similar neighborhoods, while keeping the total error at every merged set of vertices bounded (we postpone a formal definition to \Cref{sec:preliminaries}). For example, since every cograph can be contracted to a single vertex by repeatedly merging pairs of twins, that is,
vertices with identical closed or open neighborhoods, cographs can be contracted without creating any errors, and thus have twin-width 0. In fact, cographs are exactly the graphs of twin-width~\(0\). In particular, graphs of twin-width~\(0\) can be recognized efficiently, and many algorithmic problems, including the graph isomorphism problem and even graph canonization, can be solved in polynomial time on this class
\cite{cograph_isomorphism}.

The recognition results also hold for twin-width~\(1\) graphs, where a polynomial-time recognition algorithm was given in~\cite{tww_polynomial_kernels}, which was later improved
to linear-time in~\cite{tww_one}. Moreover, in~\cite{tww_polynomial_kernels} it was also shown that graphs of twin-width~\(1\) have bounded rank-width, which
implies that isomorphism and canonization can be solved in polynomial time~\cite{rank-width_isomorphism, rank-width}.
In contrast to these positive results, recognition of graphs of twin-width~\(4\) is known to be NP-hard~\cite{twwleq4NPharda}.
Moreover, since the \(\Omega(\log(n))\)-subdivision of every graph has twin-width at most \(4\)~\cite{twwleq4NPharda},
the global structure of twin-width~\(4\) graphs is essentially unrestricted, and in particular,
graph isomorphism on the class of graphs of twin-width at most \(4\) is as hard as the isomorphism problem on general graphs. This leaves open the cases of twin-width~\(2\) and~\(3\), whose structural complexity is as of yet not well-understood, and where the complexity of both recognition and isomorphism is open.
Note that in contrast to hardness of isomorphism on graphs of twin-width~\(4\) or larger, isomorphism of tournaments parameterized by their twin-width is fixed-parameter tractable~\cite{tww_tournament_isomorphism},
and thus in particular polynomial-time solvable on every class of tournaments of bounded twin-width.

In this paper, we try to gain a better understanding of the combinatorial complexity of graphs
of small twin-width.
One natural framework for this is provided by the Weisfeiler-Leman algorithm,
which is a fundamental combinatorial approach to graph isomorphism that 
measures how much ‘distinguishing power’ is needed to tell graphs apart.
The Weisfeiler-Leman algorithm is a family of purely combinatorial polynomial-time algorithms which subsume most other combinatorial algorithms for the graph isomorphism problem.
The \(1\)-dimensional Weisfeiler-Leman algorithm, which is also called color refinement or naive vertex classification,
starts by coloring each vertex of the graph by its degree, and then repeatedly refines the color of each vertex by the multiset of colors of its neighbors,
until the partition induced by the coloring stabilizes. Then, the multiset of colors can be used as an isomorphism invariant for the graphs. Even this simple
heuristic identifies almost all graphs up to isomorphism~\cite{babai_erdos_selkow}, and in particular, identifies every forest. Moreover, since almost all graphs have twin-width linear in their order~\cite{twwpaley}, the color refinement algorithm can identify graphs of unbounded twin-width.

Generalizing color refinement, the \(k\)-dimensional Weisfeiler-Leman algorithm colors \(k\)-tuples of vertices instead of single vertices. The minimal dimension \(k\) such that the coloring computed by the \(k\)-dimensional Weisfeiler-Leman algorithm identifies every graph from some class \(\mathcal{C}\) up to isomorphism is called the \emph{Weisfeiler-Leman dimension} of \(\mathcal{C}\).
It turns out that many natural graph classes have bounded Weisfeiler-Leman dimension, which implies
that the \(k\)-dimensional Weisfeiler-Leman algorithm correctly decides isomorphism on this class.
This includes graph classes of bounded tree-width or bounded rank-width~\cite{rank-width}, planar graphs and graphs of bounded genus and most generally classes excluding some minor~\cite{grohe_minors}.
However, the class of all graphs has unbounded Weisfeiler-Leman dimension, as exemplified by 
the construction of so-called \emph{CFI graphs} by Cai, Fürer, and Immerman in their seminal paper~\cite{cfi}.
The \(k\)-dimensional Weisfeiler-Leman algorithm has a close correspondence to \((k+1)\)-variable first-order counting logic~\cite{cfi}. This connection can routinely be exploited to translate bounds on the Weisfeiler-Leman dimension into
results that graphs in a specific class can be definably canonized in fixed-point logic with counting \(\IFPC\).
Such definability results imply that \(\IFPC\) can express exactly the polynomial-time decidable
properties on these graphs. \(\IFPC\)-definable canonization is possible for all classes of bounded rank-width~\cite{rank-width} and classes excluding a minor~\cite{grohe_minors}.

\paragraph*{Our results.}
We investigate the Weisfeiler-Leman dimension of graphs of small twin-width.
First, we show that in general, the Weisfeiler-Leman dimension
of a graph cannot be bounded in terms of its twin-width.
Specifically, we show that graphs of twin-width~\(4\) have unbounded Weisfeiler-Leman dimension,
meaning that no fixed-dimensional Weisfeiler-Leman algorithm can correctly distinguish all pairs of non-isomorphic
graphs in this class. Our result is based on subdivisions of CFI graphs.

In contrast to this, we show that the \(3\)-dimensional Weisfeiler-Leman algorithm suffices to identify all graphs of twin-width at most \(1\),
by showing that the algorithm can emulate the recognition algorithm of~\cite{tww_polynomial_kernels}
in order to construct a canonical \(1\)-contraction sequence for the graph,
which can then be used to identify the graph.
While it is known that graphs of twin-width~\(1\) have bounded rank-width, and thus that some fixed-dimensional Weisfeiler-Leman algorithm correctly decides isomorphism, our argument provides the precise dimension \(3\) and is much more explicit. Moreover, we show how our result translates to a more efficient \(\IFPC\)-definable canonization algorithm for graphs of twin-width~\(1\).

Finally, we consider graphs of twin-width~\(2\).
It was shown in~\cite{sparse_tww2_bounded_tw} that sparse graphs of twin-width~\(2\) have bounded tree-width,
and conjectured that stable graphs (that is, graphs excluding some fixed semi-induced half-graph \(H_t\)) of twin-width~\(2\) have bounded rank-width.
We prove this conjecture by showing that graphs of twin-width~\(2\) without a semi-induced half-graph \(H_t\) have rank-width~\(O(t)\). To achieve this, we first classify bipartite graphs of twin-width~\(1\), and then leverage this classification to analyze contractions of red paths
in contraction sequences of width~\(2\).

Using the same techniques, we also show that the tree-width of graphs of twin-width~\(2\) without a \(K_{t,t}\)-subgraph is bounded by \(O(t)\),
thus improving the bound given in~\cite{sparse_tww2_bounded_tw}. In both of these cases, it follows that the \(O(t)\)-dimensional Weisfeiler-Leman algorithm
correctly decides isomorphism of these graphs.

\paragraph*{Further related work.}
Twin-width has been extensively studied since its introduction~\cite{tww1}, which among others established bounds in terms of boolean width (and thus rank-width and clique-width) and for minor-free classes. Further connections include tree-width~\cite{bounding-twwa, sparse_tww2_bounded_tw}, and topological parameters such as genus~\cite{tww_genus}. Sharp bounds are known for planar graphs~\cite{tww_planar},
and relations to other graph parameters have also been considered in restricted
graph classes~\cite{comparing_widths}.

For small twin-widths, recognition has already been studied:
While recognition is polynomial-time for twin-width at most 1~\cite{tww_polynomial_kernels, tww_one} but NP-hard for twin-width~\(4\)~\cite{twwleq4NPharda}, the complexity of twin-width~\(2\) and~\(3\) remains open.

This paper studies the relationship of twin-width
to the Weisfeiler-Leman dimension of a graph,
which provides a complementary complexity measure: many algorithmically tractable classes have bounded Weisfeiler-Leman dimension, including graphs of bounded tree-width~\cite{grohe_tw}, rank-width~\cite{rank-width}, planar graphs~\cite{grohe_planar, kiefer_planar}, and minor-free classes~\cite{grohe_minors}. However, the relationship between twin-width and Weisfeiler-Leman dimension was previously explored only for tournaments~\cite{tww_tournament_isomorphism}, where bounded twin-width yields tractable isomorphism despite unbounded Weisfeiler-Leman dimension.

Our study of this relationship for general graphs also resolves the conjecture from~\cite{sparse_tww2_bounded_tw} on rank-width bounds for stable graphs of twin-width~\(2\). Stable graphs of bounded twin-width have been studied independently in~\cite{stable_bounded_tww}.

\section{Preliminaries}\label{sec:preliminaries}
\newcommand{\vminor}{\preceq_{\mathrm{vtx}}}
\newcommand{\adj}{\operatorname{Adj}}
\newcommand{\F}{\mathbb{F}}
\newcommand{\mm}{\operatorname{mm}}

For a natural number $k$ we set $[k] \coloneqq \{1,2, \dots, k\}$.
The rank of a matrix $M$ is denoted by~$\rk(M)$.

\paragraph*{Graphs}
All graphs in this paper are finite and simple.
We say a graph is coconnected if its complement is connected.
For a graph \(G\), we write \(V(G)\) for its vertex set and \(E(G)\) for its edge set.
The \emph{order} of \(G\) is \(|V(G)|\). For a vertex set \(S\subseteq V(G)\),
we write \(G[S]\) for the subgraph induced by~$S$ in~\(G\). For two disjoint
sets \(L,R\subseteq V(G)\), we write \(G[L,R]\) for the \emph{bipartite graph with parts \(L\) and \(R\)
induced by \(G\)}. A bipartite graph \(H[A,B]\) is a \emph{semi-induced subgraph} of \(G\) if
there exists an embedding \(\iota\colon V(H)\to V(G)\) such that the induced map \(H[A,B]\to G[\iota(A),\iota(B)]\)
is an isomorphism. If \(H\) is not a semi-induced subgraph of \(G\), then~\(G\) is \emph{\(H\)-semi-free}.

We denote the neighborhood
of \(v\in V(G)\) in \(G\) by \(N_G(v)\) or just \(N(v)\) if \(G\) is clear from context.
We also write \(N_G^{=d}(v)\), \(N_G^{\leq d}(v)\), and \(N_G^{>d}(v)\) to denote the set of
vertices at distance exactly \(d\), at most \(d\), or greater than \(d\) from \(v\) in \(G\).

For a graph $G$ and a tuple $\vec{v} \in V(G)^k$,
the \emph{atomic type} $\atp_G(\vec{v})$ is the isomorphism type
of the ordered graph \(G[\vec{v}]\), i.e., two tuples \(\vec{v}\in V(G)^k\) and \(\vec{w}\in V(H)^k\)
are of the same atomic type if and only if \(v_i\mapsto w_i\) defines an isomorphism
between \(G[\vec{v}]\) and \(H[\vec{w}]\).

A colored graph $G^c$ is a tuple $(G,c)$
in which $G$ is a graph, and $c$ is a map
defined on $V(G)$, usually into the natural numbers.

\paragraph*{Twin-width}
A \emph{trigraph} $G$ is a graph with edges colored either red or black.
Graphs are interpreted as trigraphs by coloring each edge black.
For $v \in V(G)$ the \emph{red degree} $\rdeg_G(v)$ is the number of red edges incident to~$v$.
A \emph{red component} of a trigraph $G$ is a component of the graph obtained from $G$ by removing the set of all black edges.

Two disjoint vertex subsets $U$ and $W$ of $G$ are \emph{fully connected} if every 2-set \(\{u,w\}\) with \(u\in U\) and \(w\in W\) is a black edge. If no such pair is an edge of~$G$, then \(U\) and \(W\) are \emph{disconnected}. \(U\) and \(W\) are \emph{homogeneously connected} if they are either fully connected or disconnected.
Given a partition~\(\mathcal{P}\) of \(V(G)\), we define the \emph{quotient graph} \(G/\mathcal{P}\) to be the trigraph with $V(G/\mathcal{P}) = \mathcal{P}$ such that two parts $U$ and $W$ of $\mathcal{P}$ are
\begin{align*}
	&\text{joined via a black edge} &\text{ if $U$ and $W$ are fully connected,}\\
	&\text{not adjacent} &\text{ if $U$ and $W$ are disconnected, and}\\
	&\text{joined via a red edge} &\text{ otherwise.}
\end{align*}
A \emph{partition sequence} of an order-$n$ trigraph \(G\) is a sequence \(\mathcal{P}_n, \mathcal{P}_{n-1}, \dots,\mathcal{P}_1\) of partitions 
of \(V(G)\), where \(\mathcal{P}_n\) is the discrete partition and for each \(i \in [n-1]\) the partition \(\mathcal{P}_i\) is obtained
by replacing two distinct parts
$P$ and $Q$ of \(\mathcal{P}_{i+1}\) by~\(P\cup Q\) (we call this a \emph{merge} or a \emph{contraction}).
Equivalently, a partition sequence is given by the sequence of trigraphs~\(G/\mathcal{P}_i\)
or the sequence of pairs of vertices to contract in these trigraphs, both of which are called \emph{contraction sequences}.
As a shorthand to describe the contraction of one pair,
by $G/w,x$ we denote $G/\mathcal{P}$ where
$\mathcal{P} = \{\{w,x\}\} \cup \{ \{v\}: v \in V(G) \}$.
The \emph{width} of a contraction sequence (or its associated partition sequence) is the maximum red degree over all trigraphs~\(G/\mathcal{P}_i\). If a partition or contraction sequence has width at most \(k\), we also call it a \(k\)-partition sequence or \(k\)-contraction sequence respectively.
The \emph{twin-width} \(\tww(G)\) of \(G\) is the minimal width of a contraction sequence of $G$.
The twin-width of a colored graph is the twin-width
of the underlying uncolored graph.

The \emph{component twin-width} of~$G$ is the minimum over all contraction sequences
of the maximal order of red components in the appearing trigraphs. Component twin-width is functionally equivalent
to rank-width~\cite{tww6a} and differs by at most a constant factor from clique-width~\cite{ctww_vs_cw}.

\paragraph*{Ranks, rank-width, and well-linked sets}
Let \(G\) be a graph. Given two vertex sets \(A,B\subseteq V(G)\) the \emph{biadjacency matrix of \(A\) and \(B\)}, denoted by \(\adj_G(A,B)\), is
the \(|A|\times|B|\)-matrix over \(\F_2\) with rows indexed by vertices in \(A\) and columns are indexed by vertices in \(B\)  such that the \((a,b)\)-th entry is \(1\) if and only if \(ab\in E(G)\).
We set \(\rk_G(A,B)\coloneqq\rk(\adj_G(A,B))\).
The rank-function is subadditive: If \(A,B,C\subseteq V(G)\) are pairwise disjoint,
then \(\rk_G(A,B\cup C)\leq\rk_G(A,B)+\rk_G(A,C)\).

Let \(A\) and \(B\) in \(V(G)\) be disjoint sets.
The \emph{rank-connectivity} of $A$ and $B$ in $G$
is
\[\kappa^{\rk}_G(A,B)\coloneqq \min_{A\subseteq X\subseteq V(G)\setminus B}\rk_G(X,V(G)\setminus X).\]

\begin{lemma}[Monotonicity and subadditivity of the rank-connectivity function] \label{lem:properties_of_rk*}
If \(G\) is a graph and $A$, $B$, and $C$ are pairwise disjoint vertex subsets of $G$, then
\begin{enumerate}
\item \(\kappa^{\rk}_G(A,B)\leq \kappa^{\rk}_G(A,B\cup C)\), and
\item \(\kappa^{\rk}_G(A,B\cup C)\leq \kappa^{\rk}_{G-C}(A,B)+\kappa^{\rk}_{G-B}(A,C)\).
\end{enumerate}
\begin{proof}
The first inequality is immediate. For the second one, pick two cuts \(V(G)=A_B'\dotcup B'=A_C'\dotcup C'\)
	with \(A\subseteq A_B'\subseteq V(G)\setminus C\), \(A\subseteq A_C'\subseteq V(G)\setminus B\), \(B\subseteq B'\subseteq V(G)\setminus C\) and \(C\subseteq C'\subseteq V(G)\setminus B\) such that \(\rk(A_B',B')=\kappa^{\rk}_{G-C}(A,B)\) and \(\rk(A_C',C')=\kappa^{\rk}_{G-B}(A,C)\).
	Then
	\begin{align*}
	\kappa^{\rk}_G(A,B\cup C)
	&\leq \rk(A_B'\cap A_C',B'\cup C')\\
	&\leq \rk(A_B',B')+\rk(A_C',C')\\
	&\leq \kappa^{\rk}_{G-C}(A,B)+\kappa^{\rk}_{G-B}(A,C).
		\qedhere
	\end{align*}
\end{proof}
\end{lemma}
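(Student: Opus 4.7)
The plan is to handle the two parts separately, both by peeling back the definition of $\kappa^{\rk}$ and combining it with the elementary subadditivity of matrix rank over disjoint blocks of columns (or rows), which is explicitly stated in the excerpt just above the lemma.

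For part (a), I would argue purely by set inclusion. Any set $X$ with $A \subseteq X \subseteq V(G) \setminus (B \cup C)$ trivially also satisfies $A \subseteq X \subseteq V(G) \setminus B$, so the family of cuts over which the minimum defining $\kappa^{\rk}_G(A, B \cup C)$ is taken is a subfamily of the one defining $\kappa^{\rk}_G(A, B)$. Taking the minimum of $\rk_G(X, V(G)\setminus X)$ over a smaller family yields a value that is at least as large, which is exactly claim (a).

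For part (b), the strategy is to intersect optimal cuts. I pick $X_B$ realizing $\kappa^{\rk}_{G-C}(A, B)$, so $A \subseteq X_B \subseteq V(G) \setminus (B \cup C)$ and $\rk_{G-C}(X_B, Y_B) = \kappa^{\rk}_{G-C}(A,B)$ with $Y_B \coloneqq V(G-C) \setminus X_B \supseteq B$; analogously pick $X_C$ with $Y_C \coloneqq V(G-B) \setminus X_C \supseteq C$. Set $X \coloneqq X_B \cap X_C$. Then $A \subseteq X$ and $X$ avoids both $B$ and $C$, so $X$ is a feasible cut for $\kappa^{\rk}_G(A, B \cup C)$, giving $\kappa^{\rk}_G(A, B \cup C) \leq \rk_G(X, V(G) \setminus X)$. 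The central computation observes that $V(G) \setminus X = Y_B \cup Y_C$, since $B \subseteq Y_B$ and $C \subseteq Y_C$ cover exactly the ``extra'' vertices on each side. Splitting this union as $Y_B \dotcup (Y_C \setminus Y_B)$ and applying subadditivity of $\rk_G$ on disjoint column-blocks yields $\rk_G(X, V(G) \setminus X) \leq \rk_G(X, Y_B) + \rk_G(X, Y_C \setminus Y_B)$. A submatrix monotonicity argument (restricting rows from $X_B$ to $X \subseteq X_B$, and columns from $Y_C$ to $Y_C \setminus Y_B$) bounds this by $\rk_G(X_B, Y_B) + \rk_G(X_C, Y_C)$.

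The last step, which I expect to be the main bookkeeping obstacle, is verifying that $\rk_G(X_B, Y_B) = \rk_{G-C}(X_B, Y_B)$ and likewise $\rk_G(X_C, Y_C) = \rk_{G-B}(X_C, Y_C)$; this holds because all rows and columns of these biadjacency matrices are already indexed by vertices contained in $V(G-C)$ and $V(G-B)$ respectively, so deleting the unused vertex set does not alter the matrix. Chaining everything gives $\rk_G(X, V(G) \setminus X) \leq \kappa^{\rk}_{G-C}(A, B) + \kappa^{\rk}_{G-B}(A, C)$, which finishes the proof.
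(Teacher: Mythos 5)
Your proposal is correct and follows essentially the same route as the paper: part (a) by shrinking the family of feasible cuts, and part (b) by intersecting the two optimal cuts realizing $\kappa^{\rk}_{G-C}(A,B)$ and $\kappa^{\rk}_{G-B}(A,C)$ and then applying subadditivity and submatrix monotonicity of the rank function. Your version is in fact slightly more careful than the paper's in two spots the paper glosses over — disjointifying the column blocks as $Y_B \dotcup (Y_C \setminus Y_B)$ before invoking subadditivity, and noting that the cuts live in $V(G-C)$ and $V(G-B)$ so that the ranks computed there agree with those in $G$ — but these are bookkeeping refinements, not a different argument.
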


A \emph{rank-decomposition} of \(G\) is a rooted binary tree \(T\) together with a bijection between the leaves of this tree and the vertices of \(G\).
This way, every tree edge \(e\in E(T)\) induces a cut of \(V(G)\) into two parts \(A\) and \(B\) by partitioning the vertices according
to the component of \(T-e\) their leafs lie in. The width of the rank decomposition is the maximal rank \(\rk(A,B)\) across all these cuts,
and the rank-width of \(G\) is the minimal width of a rank decomposition.

The notable characterization of bounded rank-width we will need is the existence of well-linked sets of vertices.

A \emph{\(\rk\)-well-linked} set in a graph is a subset \(U\subseteq V(G)\) such that for any two disjoint sets
\(A,B\subseteq U\), we have \(\kappa^{\rk}_G(A,B)\geq \min\{|A|,|B|\}\).
\begin{theorem}[{\cite[Theorem 5.2]{rw_linked_sets}}]\label{thm:prelims:rw_well-linked_sets}
Every graph of rank-width greater than \(k\) contains a \(\rk\)-well-linked set of size \(k\).
\end{theorem}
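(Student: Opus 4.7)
The plan is to prove the contrapositive: if $G$ contains no $\rk$-well-linked set of size $k$, then $G$ admits a rank-decomposition of width at most $k$. The argument constructs such a decomposition recursively, driven by a balanced-separator lemma that exploits symmetric submodularity of the rank connectivity function. This mirrors the general tangle/well-linked-set duality for symmetric submodular connectivity functions due to Oum and Seymour.

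\textbf{Balanced cut lemma.} The core step is to show that whenever a subset $W \subseteq V(G)$ has size at least $k$, there exists a partition $V(G) = X \dotcup Y$ with $\rk_G(X, Y) \leq k$ and $\min(|W \cap X|, |W \cap Y|) \geq c|W|$ for some fixed constant $c > 0$. The hypothesis supplies disjoint $A, B \subseteq W$ with $\kappa^{\rk}_G(A, B) < \min(|A|, |B|) \leq k$, hence an initial low-rank cut separating $A$ from $B$. If the cut is not yet balanced with respect to $W$, I would iteratively rebalance by uncrossing it with a second low-rank cut on the heavy side: another non-well-linked pair inside $W \cap X$ yields such a cut, and combining the two via the subadditivity of $\kappa^{\rk}$ from \Cref{lem:properties_of_rk*}, together with symmetric submodularity of $X \mapsto \rk_G(X, V(G) \setminus X)$, keeps the overall rank at most $k$ while the mass distribution progresses toward balance.

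\textbf{Recursive construction.} With the balanced cut lemma in hand, I would build the rank-decomposition top-down. At each internal node handling some set $S \subseteq V(G)$ with $|S| \geq k$, apply the lemma with $W = S$ to split $S = S_1 \dotcup S_2$ across a cut of rank at most $k$ and with $|S_1|, |S_2| \geq c|S|$; then recurse on each $S_i$. The invariant ``no $\rk$-well-linked set of size $k$'' is hereditary under taking subsets of $V(G)$, so the lemma applies throughout the recursion. Every edge of the resulting binary tree carries a cut of rank at most $k$, establishing the desired width bound.

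\textbf{Main obstacle.} The delicate step is the uncrossing in the balanced cut lemma. A single non-well-linked witness may yield a very unbalanced cut, and combining witnesses iteratively must be controlled so that the rank stays at most $k$ throughout. The classical approach introduces a potential function tracking how far the cut is from balanced and uses a careful case analysis on how two witness cuts cross each other; symmetric submodularity drives the potential down without letting the rank exceed the target bound. A further subtlety is that a direct iteration might produce only a rank-decomposition of width $Ck$ for some constant $C$, and obtaining the precise bound $k$ requires a refined choice of the witness pair at each step and a careful accounting of how much rank is ``spent'' per uncrossing.
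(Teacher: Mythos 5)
The paper does not prove this statement at all: it is imported verbatim from the cited reference \cite{rw_linked_sets} and used as a black box, so there is no in-paper argument to compare against. Judged on its own terms, your proposal follows the standard Oum--Seymour-style duality strategy, but as written it has two genuine gaps.

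First, the \textbf{balanced cut lemma is the entire content of the theorem, and you do not prove it}. The hypothesis only hands you, for each non-well-linked $W$, some pair $A,B\subseteq W$ with a low-rank cut separating them; that cut can be arbitrarily unbalanced with respect to $W$, and turning a supply of such unbalanced cuts into a single cut that is simultaneously balanced \emph{and} of rank at most $k$ is exactly where all the work lies. You acknowledge this and concede that the natural iteration may only yield width $Ck$; but the statement being proved is tight (rank-width greater than $k$ versus a well-linked set of size exactly $k$), so ``a refined choice of witness pair and careful accounting'' is not a proof step --- it is the missing proof. Second, the \textbf{recursive construction conflates two different ranks}. A rank-decomposition requires that for every subtree with leaf-set $S_1$, the cut value $\rk_G(S_1,V(G)\setminus S_1)$ be at most $k$. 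Your lemma bounds $\rk_G(X,Y)$ for the splitting partition $V(G)=X\dotcup Y$, and you set $S_1=S\cap X$; but then
\[
\rk_G(S_1,V(G)\setminus S_1)\;\leq\;\rk_G(S_1,Y)+\rk_G(S_1,X\setminus S_1),
\]
and the second term is not controlled by anything in your argument. The standard proofs handle this by carrying an explicit invariant on the boundary rank $\rk_G(S,V(G)\setminus S)$ of the current part and choosing the refining cut so that both resulting parts again satisfy it (via submodularity); without that invariant the recursion does not establish the width bound. Since the paper treats the theorem as a citation, the honest options are to do likewise or to supply the full uncrossing argument; the present sketch does neither.
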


There exist similar notions of well-linkedness for other connectivity parameters.
For example, a vertex subset \(U\subseteq V(G)\) is \emph{\(\mm\)-well-linked} if each cut \(U=X\dotcup (U\setminus X)\)
contains a matching of size at least \(\min\{|X|,|U\setminus X|\}\). This is equivalent to requiring that every two subsets
\(A,B\subseteq U\) are linked by \(\min\{|A|,|B|\}\) vertex-disjoint paths.
\begin{theorem}[{\cite[Lemma 3.4]{well-linked_sets}}]
Every graph of tree-width greater than \(k\) contains a \(\mm\)-well-linked set of size \(k\).
\end{theorem}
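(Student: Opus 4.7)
This is the tree-width analogue of \Cref{thm:prelims:rw_well-linked_sets}. I would prove the contrapositive: if $G$ contains no $\mm$-well-linked set of size $k$, then $\tw(G) \leq k-1$. The argument is the classical recursive construction of a tree decomposition from small separators, and parallels in structure the proof of \Cref{thm:prelims:rw_well-linked_sets} in \cite{rw_linked_sets}.

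\textbf{Step 1 (Menger duality).} By Menger's theorem, $U \subseteq V(G)$ is $\mm$-well-linked if and only if for every partition $U = A \dot\cup B$ with $|A| \leq |B|$, every $A$--$B$ vertex separator of $G$ has size at least $|A|$. Consequently, if $|U| = k$ and $U$ is not $\mm$-well-linked, there exists such a partition $U = A \dot\cup B$ together with a vertex separator $S \subseteq V(G)$ with $|S| < |A| \leq k/2$.

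\textbf{Step 2 (Recursive decomposition).} Prove by induction on $|V(G)|$ the following strengthening: for every $W \subseteq V(G)$ with $|W| \leq k$, $G$ admits a tree decomposition of width at most $k-1$ in which $W$ lies inside some bag. The base case $|V(G)| \leq k$ is trivial. For the inductive step, extend $W$ to a set $U$ of size $k$, apply Step~1 to obtain a partition $U = A \dot\cup B$ and a separator $S$, and split $G$ into the subgraphs $G[V_A \cup S]$ and $G[V_B \cup S]$, where $V_A$ (resp.\ $V_B$) is the union of the connected components of $G - S$ meeting $A$ (resp.\ $B$). Apply induction to each of these subgraphs with an appropriate boundary set drawn from $W \cap V_A \cup S$ or $W \cap V_B \cup S$, and glue the two resulting tree decompositions along a bag containing $W \cup S$.

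\textbf{Main obstacle.} The principal technical point is the bookkeeping that keeps every bag of size at most $k$ throughout the recursion; naively, the joining bag $W \cup S$ can have size up to $k + |S|$ and thus exceed the budget. The standard resolution is to split $W$ across the two sides of the separator (passing $(W \cap V_A) \cup S$ and $(W \cap V_B) \cup S$ to the two recursive calls) and to exploit the bound $|S| < |A| \leq k/2$, ensuring each boundary set passed down has size at most $k$. A secondary subtlety is guaranteeing that the recursion strictly decreases the order of the graph; this is handled by choosing $S$ to be an inclusion-minimal $A$--$B$ separator, which forces both $V_A$ and $V_B$ to be nonempty.
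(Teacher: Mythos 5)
First, a point of comparison: the paper does not prove this statement at all --- it is quoted from \cite[Lemma 3.4]{well-linked_sets}, exactly as \Cref{thm:prelims:rw_well-linked_sets} is quoted for rank-width --- so there is no in-paper proof to measure your argument against; I am assessing it on its own merits.

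Your overall route (contrapositive, Menger duality, recursive splitting along small separators) is the standard one, and Step~1 together with the termination argument is fine (in fact \(V_A,V_B\neq\emptyset\) already follows from \(|S|<|A|\le|B|\), without appealing to minimality of \(S\)). The genuine gap sits exactly at the point you flag as the ``main obstacle'', and the resolution you propose does not resolve it. The two recursive calls return decompositions \(T_A\), \(T_B\) whose designated bags contain \((W\cap V_A)\cup S\) and \((W\cap V_B)\cup S\). To merge them into a tree decomposition of \(G\) in which the trace of each vertex of \(S\) is connected \emph{and} in which \(W\) lies in a single bag --- which is precisely what your induction hypothesis promises and what the parent level of the recursion relies on --- you must insert a joining bag containing all of \(W\cup S\). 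Splitting \(W\) between the two calls only controls the sizes of the boundary sets passed \emph{down}; it does not shrink this joining bag, and if you instead omit the joining bag, then whenever \(W\) meets both \(V_A\) and \(V_B\) no bag of the glued decomposition contains \(W\), so the invariant you are inducting on fails. Since \(S\) may be disjoint from \(W\), with \(|W|=k\) and \(|S|\) as large as \(\lfloor k/2\rfloor-1\), the joining bag can have size close to \(3k/2\). What your recursion actually establishes is therefore the weaker implication ``no \(\mm\)-well-linked set of size \(k\) implies \(\tw(G)\le\tfrac{3}{2}k\)'', i.e.\ the theorem with \(k\) degraded by a factor of roughly \(3/2\). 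That loss is harmless for the qualitative \(O(t)\) bounds of \Cref{thm:sparse:tww2->bounded_tw}, but it is not the statement as cited (and would perturb the explicit constants in \Cref{appendix:sparse:tww2->bounded_tw}); obtaining the exact constant requires a more delicate invariant in the induction (in the spirit of the Seymour--Thomas bramble duality proof, or the argument of the cited source), not just the bookkeeping you describe.
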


\paragraph*{Weisfeiler-Leman algorithms}
For a graph $G$ the $k${\nobreakdash-}dimensional Weisfeiler-Leman  algorithm (short: $k$-WL)
iteratively refines an isomorphism-invariant coloring
of $V(G)^k$.
The initial coloring
is $\chi^0_{G,k}: \vec{v} \mapsto \atp_G(\vec{v})$.
For $i \geq 0$ we set
\[\chi^{i+1}_{G,k}(\vec{v}) \coloneqq \Bigl(
\chi^i_{G,k}(\vec{v}),\multil[\bigl] \left(\atp_G(\vec{v}w),
\chi^i_{G,k}(\vec{v}\,[w/1]),\ldots,\chi^i_{G,k}(\vec{v}\,[w/k])\right)
\colon w \in V(G) \multir[\bigr]\Bigr),\]
where the notation \(\vec{v}\,[w/i]\) denotes the tuple obtained from \(\vec{v}\)
by replacing its \(i\)-th entry with \(w\).
Note that for some $i \in \N$,
the colorings $\chi^{i}_{G,k}$ and $\chi^{i+1}_{G,k}$ are equivalent, i.e., both colorings have precisely the same color classes.
If \(i\) is the minimal index where this happens, the \emph{stable coloring}
\(\chi_{G,k}^{i+1}\) is denoted by $\chi_{G,k}$ and
can be computed in $\mathcal{O}(k^2n^{k+1}\log n)$ \cite{wlruntime}.
Two graphs $G$ and $H$ are \emph{distinguished by}
$k${\nobreakdash-}WL
if there is a color $c$ such that
\[|\{ \vec{v} \in V(G)^k \colon \chi_{G,k}(\vec{v}) = c \}|
\neq |\{ \vec{w} \in V(H)^k \colon \chi_{H,k}(\vec{w}) = c \}|. \]
A graph $G$ is  \emph{identified} by $k${\nobreakdash-}WL
if the algorithm distinguishes $G$ from every non-isomorphic graph $H$.
The \emph{WL-dimension} of $G$ is the smallest
$k$ such that $k${\nobreakdash-}WL identifies the graph.
The WL-dimension of a graph class $\mathcal{C}$
is the smallest $k$ such that $k${\nobreakdash-}WL identifies
all graphs in~$\mathcal{C}$.
For a more detailed discussion of the algorithm, see for example \cite{cfi, wl_kiefer}.

The logic $C_k$ is the $k$-variable fragment of first-order logic with counting.
For two graphs~$G$ and~$H$ that satisfy exactly the same formulas of $C_k$,
we write $G \equiv_{C_{k}} H$.
For a more detailed definition and relevant examples,
see for example \cite{cfi}.
\begin{theorem}[Theorem~5.2 of~\cite{cfi}]
        Let $k \in \mathbb{N}$. Two graphs $G$ and $H$ are not distinguished by
        		$k${\nobreakdash-}WL precisely if
        		 $G \equiv_{C_{k+1}} H$.
\end{theorem}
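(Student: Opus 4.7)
The plan is to establish the stronger round-by-round correspondence: for every $i\in\N$ and every pair of tuples $\vec v\in V(G)^k$, $\vec w\in V(H)^k$, the equality $\chi^i_{G,k}(\vec v)=\chi^i_{H,k}(\vec w)$ holds if and only if $\vec v$ and $\vec w$ satisfy exactly the same $C_{k+1}$-formulas with free variables among $x_1,\dots,x_k$ and of quantifier rank at most $i$. Since the refinement stabilises after finitely many rounds, specialising this to the empty tuple (or, equivalently, comparing the sizes of the color classes in the stable coloring) yields both implications of the theorem.

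I would proceed by induction on $i$. The base case $i=0$ is immediate, because $\chi^0_{G,k}(\vec v)=\atp_G(\vec v)$ and the atomic type is expressible by a quantifier-free $C_{k+1}$-formula listing the adjacencies and equalities among the $x_j$. For the induction step, assume the claim holds for $i$. For the direction ``same color implies same formulas'', consider a counting quantifier $\exists^{\geq n}x_j.\psi$ with $\psi$ of quantifier rank at most $i$ and free variables among $x_1,\dots,x_{k+1}$. If $\chi^{i+1}_{G,k}(\vec v)=\chi^{i+1}_{H,k}(\vec w)$, then in particular the multisets of colors $\chi^i_{G,k}(\vec v\,[u/j])$ for $u\in V(G)$ and $\chi^i_{H,k}(\vec w\,[u'/j])$ for $u'\in V(H)$ agree. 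Applying the induction hypothesis tuple-wise to each witness, the number of $u$ with $G\models\psi(\vec v\,[u/j])$ equals the corresponding count in $H$, giving the quantifier equivalence. Boolean combinations are straightforward.

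Conversely, for the direction ``same formulas imply same color'', I would show that each class of $\chi^{i+1}_{G,k}$ is defined by some $C_{k+1}$-formula of quantifier rank $i+1$. By the induction hypothesis, each $\chi^i$-color $c$ is defined by a formula $\phi^i_c(x_1,\dots,x_k)$ using only variables from $\{x_1,\dots,x_{k+1}\}$. Translating the refinement
\[
\chi^{i+1}_{G,k}(\vec v)=\Bigl(\chi^i_{G,k}(\vec v),\multil[\bigl]\bigl(\atp_G(\vec v u),\chi^i_{G,k}(\vec v\,[u/1]),\dots,\chi^i_{G,k}(\vec v\,[u/k])\bigr)\colon u\in V(G)\multir[\bigr]\Bigr)
\]
into logic yields a conjunction, over all atomic types $\tau$, tuples $(c_1,\dots,c_k)$ of $\chi^i$-colors, and multiplicities $n$ that occur, of counting formulas of the shape $\exists^{=n} x_{k+1}.\bigl(\tau(x_1,\dots,x_{k+1})\wedge\bigwedge_{j=1}^{k}\phi^i_{c_j}(x_1,\dots,x_{j-1},x_{k+1},x_{j+1},\dots,x_k)\bigr)$.

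The main technical obstacle is the variable bookkeeping under substitution: when plugging $x_{k+1}$ into the $j$-th slot of $\phi^i_{c_j}$, bound variables inside $\phi^i_{c_j}$ may clash with $x_{k+1}$ and must be renamed. Since $\phi^i_{c_j}$ uses only the variables $x_1,\dots,x_{k+1}$, any bound occurrence of $x_{k+1}$ can be safely renamed to $x_j$, which is now available because it no longer occurs free. The cleanest way to discharge this bookkeeping uniformly is to prove the round-wise claim via the bijective $(k+1)$-pebble game characterisation of $C_{k+1}$: each round of pebble-swapping corresponds directly to one probe with the extra vertex $u$ in the WL-refinement, making the matching between logic and algorithm transparent and removing the need to manipulate formulas syntactically.
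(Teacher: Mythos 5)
This statement is a classical result of Cai, F\"urer, and Immerman (building on Immerman--Lander); the paper cites it as Theorem~5.2 of~\cite{cfi} and gives no proof of its own, so there is nothing to compare against except the literature. Your proposal reproduces the standard argument (round-by-round induction matching $\chi^i_{G,k}$ with quantifier-rank-$i$ types of $C_{k+1}$), and it is sound in outline; the final theorem only needs the stable colorings to correspond to full $C_{k+1}$-equivalence, so even the exact rank bookkeeping is not load-bearing.

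One step of your sketch is genuinely thin, and it is the crux of the whole theorem rather than mere bookkeeping. In the direction ``same color implies same formulas'' you consider $\exists^{\geq n}x_j.\psi$ with $\psi$ of rank at most $i$ and free variables among $x_1,\dots,x_{k+1}$, and then say you ``apply the induction hypothesis tuple-wise to each witness.'' But the induction hypothesis speaks only about $k$-tuples, i.e., about formulas with free variables among $x_1,\dots,x_k$; when $j=k+1$ the subformula $\psi$ genuinely has $k+1$ free variables and the hypothesis does not apply to the $(k+1)$-tuple $\vec v u$. What is needed here is one further level of unfolding: write $\psi$ as a Boolean combination of atomic formulas in $x_1,\dots,x_{k+1}$ (handled by $\atp_G(\vec v u)$, which the refinement records) and of counting subformulas $\exists^{\geq m}x_\ell.\psi'$; each of the latter has at most $k$ free variables and rank at most $i$, so after renaming it is evaluated on the tuple $\vec v\,[u/\ell]$ (or on $\vec v$ itself when $\ell=k+1$), where the induction hypothesis and the recorded colors $\chi^i_{G,k}(\vec v\,[u/\ell])$ do apply. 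This is exactly why the refinement stores all $k$ substituted colors together with the atomic type. You do gesture at the right escape hatch --- proving the claim through the bijective $(k+1)$-pebble game, where positions of length $k+1$ are first-class and this case disappears --- so the proposal is repairable as written, but as stated the key inductive step is not yet closed.
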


The \emph{bijective $k$-pebble game},
denoted by $\bpk$,
is a round-based combinatorial game for two players, \emph{Spoiler} and \emph{Duplicator}.
A \emph{position}  is a pair of tuples $(\vec{v},\vec{w})
\in V(G)^\ell \times V(H)^\ell$ with $\ell \in [k] \cup \{0\}$.
The initial position is $((),())$.
Let $((v_1,\ldots,v_\ell),(w_1,\ldots,w_\ell))$  be the position at the start of a round.
If ${\ell > 0}$, then Spoiler can remove one pebble pair by picking some
$i \in [\ell]$ and move to
$((v_1,\ldots,v_{i-1},v_{i+1},\ldots,v_\ell,)(w_1,\ldots,w_{i-1},w_{i+1},\ldots,w_\ell))
\eqqcolon (\vec{v}',\vec{w}')$.
If Spoiler does not remove a pebble pair, then
$(\vec{v}',\vec{w}') \coloneqq (\vec{v},\vec{w})$.
If Spoiler removed a pebble or
$\ell < k$, then Duplicator picks a bijection $b: V(G) \rightarrow V(H)$ and Spoiler picks some $v \in V(G)$ and moves to $(\vec{v}'v,\vec{w}'b(v))$.
Spoiler wins if at any point,
$\atp_G(\vec{v}) \neq \atp_H(\vec{w})$.
Duplicator wins if Spoiler never wins.
    We say $p$ is a \emph{winning position} for Spoiler/Duplicator
if Spoiler/Duplicator has a winning strategy for the game starting
at~$p$.
We say Spoiler/Duplicator \emph{wins}
if $((),())$ is a winning position for Spoiler/Duplicator.
\begin{lemma}[\cite{cfi}]
        For all integers $k \geq 1$, two graphs $G$ and $H$
        are distinguished by $k${\nobreakdash-}WL if and only if
        Spoiler wins the bijective ${(k+1)}${\nobreakdash-}pebble game on $G$ and $H$.
\end{lemma}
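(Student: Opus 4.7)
The plan is to prove a stronger position-wise equivalence by induction on the number of rounds and then specialize to the empty initial position. Specifically, I would show by induction on $r \geq 0$ that for any tuples $\vec v \in V(G)^\ell$ and $\vec w \in V(H)^\ell$ with $0 \leq \ell \leq k+1$, Duplicator has a winning strategy for the first $r$ rounds of $\bp{k+1}$ starting from position $(\vec v, \vec w)$ if and only if a suitable extension of the $r$-round WL coloring $\chi^r_{G,k}$ assigns the same color to $\vec v$ and $\vec w$. Since $\chi^r_{G,k}$ is defined only on $k$-tuples, I would extend it to tuples of length at most $k+1$ by padding shorter tuples with a fresh marker symbol and, for length-$(k+1)$ tuples, recording the atomic type together with the $\chi^r$-colors of all $k$-sub-tuples.

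The base case $r = 0$ is immediate: Duplicator trivially survives zero rounds exactly when $\atp_G(\vec v) = \atp_H(\vec w)$, matching the definition of $\chi^0_{G,k}$. In the inductive step, a single round consists of Spoiler optionally removing a pebble pair (at some position $i$), Duplicator choosing a bijection $b\colon V(G) \to V(H)$, and Spoiler placing a new pebble $(v, b(v))$ for some $v \in V(G)$. By the inductive hypothesis, Duplicator wins the remaining $r$ rounds iff the resulting position has matching extended $\chi^r$-colors. A standard counting argument shows that such a bijection $b$ exists iff the multisets
\[
\multil \chi^r_{G,k}(\vec v'\!u) : u \in V(G) \multir \quad\text{and}\quad \multil \chi^r_{H,k}(\vec w'\!u) : u \in V(H) \multir
\]
are equal, where $\vec v'$ and $\vec w'$ are the (possibly shortened) tuples after Spoiler's removal; this is exactly the refinement condition defining $\chi^{r+1}_{G,k}$. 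Spoiler's freedom to choose which of the $\ell$ pebbles to remove is accommodated by the extended coloring, which records $\chi^r$-information for every $k$-sub-tuple.

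Specialized to the empty initial position $((),())$ and to the stable index of WL, this yields that Duplicator wins the entire game iff $\chi_{G,k}$ and $\chi_{H,k}$ induce the same color-count multiset on $G$ and~$H$. By the definition in \Cref{sec:preliminaries}, this is precisely the failure of $k$-WL to distinguish $G$ and~$H$, so taking contrapositives gives the claimed equivalence between Spoiler winning and $k$-WL distinguishing $G$ and $H$.

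The main technical obstacle is reconciling the length mismatch between the $k$-tuples colored by WL and the variable-length positions of the pebble game, and translating Spoiler's choice of which pebble to remove into the refinement step of WL. Once the extended coloring is set up carefully, the induction itself is essentially bookkeeping, and matches the standard Cai-Fürer-Immerman correspondence between counting logic and bijective pebble games cited just before the statement.
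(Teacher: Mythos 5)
The paper does not prove this lemma: it is quoted directly from \cite{cfi} (Hella's bijective-pebble-game characterization combined with the WL/counting-logic correspondence), so there is no in-paper proof to compare against. Your round-indexed induction — relating $r$-round survivability of Duplicator in $\bp{k+1}$ from a position $(\vec v,\vec w)$ to equality of a suitably extended refinement coloring $\chi^r_{G,k}$, and then specializing to the stable coloring and the empty position — is exactly the standard argument from the cited literature, and the outline is correct. The points that still need to be written out carefully are the ones you already flag as bookkeeping: the permutation argument identifying the $k$-sub-tuples of a $(k+1)$-tuple $\vec v w$ with the substituted tuples $\vec v[w/j]$ appearing in the WL refinement (stable WL colors determine the colors of all permutations of a tuple, but this must be invoked explicitly); the passage from ``Duplicator survives every finite number of rounds'' to ``Duplicator wins the game'' via stabilization of the colorings on a finite graph; and the equivalence between the color-count condition on $k$-tuples in the definition of ``distinguished'' and the bijection condition at the empty initial position.
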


\paragraph*{Half-graphs and stability}
For an integer \(k\), the \emph{half-graph} \(H_k\) is the bipartite graph with vertex set
\(V(H_k)=\{v_i\colon i\in[k]\}\cup\{w_i\colon i\in[k]\} \) such that
\(v_iw_j\in E(H_k)\) if and only if \(i\leq j\).

A class of graphs is monadically stable if it does not transduce the class of all half-graphs.
In particular, this implies that the class is \(H_k\)-semi-free for some \(k\in\N\).
Since we will only use this latter property of monadically stable classes, we will not
formally define \(\FO\)-transductions, and instead refer to \cite{stable_bounded_tww}.

\section{Twin-width 4}

In this section, we show that 
the graphs of twin-width \(4\) do not
have a bounded Weisfeiler-Leman dimension.
To this end, we consider subdivisions of
\emph{Cai-Fürer-Immerman} graphs (short: CFI graphs), cf.~\cite{cfi}.

\begin{lemma}[\cite{cfi}]\label{cfi}
    For all $k \in \N$ with $k \geq 1$,
    there is a pair of non-isomorphic cubic CFI graphs
    of size~$\mathcal{O}(k)$
    which are not distinguished by
    $k$-WL.
\end{lemma}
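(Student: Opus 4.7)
My plan is to follow the classical construction of Cai, Fürer, and Immerman. First, I would fix a family of cubic \emph{base graphs} $G_k$ on $O(k)$ vertices whose treewidth is at least $k+1$. Such graphs exist, for example, as cubic expanders, which have treewidth linear in their order and thus satisfy the treewidth bound already for $|V(G_k)|=O(k)$.

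Next I would build the pair of CFI graphs on top of $G_k$. For each vertex $v\in V(G_k)$ with incident edges $e_1,e_2,e_3$, introduce a vertex gadget consisting of four \emph{inner} vertices $a_v^S$ indexed by the even-size subsets $S\subseteq\{e_1,e_2,e_3\}$, plus two \emph{port} vertices $e_i^v,\bar e_i^v$ per incident edge $e_i$, where $a_v^S$ is joined to $e_i^v$ if $e_i\in S$ and to $\bar e_i^v$ otherwise. For every edge $e=uv$ of $G_k$, add a perfect matching between the port pairs $\{e^u,\bar e^u\}$ and $\{e^v,\bar e^v\}$. The identity matching on every edge yields the graph $X(G_k)$, while swapping the matching on exactly one edge yields the twisted graph $\tilde X(G_k)$. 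A direct degree count shows that both graphs are cubic, and their order is $O(|V(G_k)|)=O(k)$.

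To see that $X(G_k)\not\cong \tilde X(G_k)$, I would use the standard parity argument: automorphisms of a single vertex gadget act on the incident port pairs by flipping an even number of them, so the parity of the total number of twisted edges is invariant under any isomorphism. Since $X(G_k)$ and $\tilde X(G_k)$ differ in this parity, they are non-isomorphic.

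The main obstacle is showing that $k$-WL does not distinguish $X(G_k)$ from $\tilde X(G_k)$. For this I would appeal to the bijective pebble game characterization and exhibit a winning strategy for Duplicator in $\bp{k+1}$. The invariant is that Duplicator maintains a partial isomorphism between $X(G_k)$ and $\tilde X(G_k)$ differing by a single \emph{movable} twist located on some edge of $G_k$. After any round, the $k+1$ pebble positions project to a set $P\subseteq V(G_k)$ of at most $k+1$ base vertices; because $G_k$ has treewidth at least $k+1$, $P$ cannot corner the twist in the associated cops-and-robber game, so Duplicator can slide the twist into an edge of $G_k$ outside the region blocked by $P$ while answering Spoiler's bijection so that all pebbled pairs agree on their atomic types. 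Establishing this routing inside the gadgeted graph, and checking that the bijection on the vertex level can always be completed consistently with the local gadget automorphisms, is the technically delicate part of the argument.
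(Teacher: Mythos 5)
The paper states this lemma as a black-box citation to \cite{cfi} and gives no proof, and your proposal is a faithful reconstruction of exactly that classical argument: the degree-3 CFI gadget over a cubic base graph of linear treewidth, the parity invariant for non-isomorphism, and Duplicator's twist-sliding strategy in the bijective pebble game. The only cosmetic difference is that you phrase the escape condition via treewidth and the cops-and-robber game, whereas Cai, Fürer, and Immerman originally use a separator condition on the base graph; these are equivalent for this purpose, and your quantitative bookkeeping ($k+1$ pebbles versus treewidth at least $k+1$, cubic expanders giving treewidth $\Omega(n)$ on $O(k)$ vertices) is consistent.
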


    Let $G$ be a graph and $s \in \N$.
    The \emph{$s$-subdivision} of $G$, denoted by $G^s$ is the graph obtained from $G$ by replacing
   every edge of $G$ by a path of length $s+1$.
    A \emph{$(\geq s)$-subdivision} of $G$ is an $s'$-subdivision of $G$
    with $s' \geq s$.

\begin{lemma}[\cite{twwleq4NPharda}]\label{tww4-subdivisions}
	If $G$ is a graph on $n$ vertices
	and $s \geq 2 \log n$, then $\tww(G^s) \leq 4$.
\end{lemma}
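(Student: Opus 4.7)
The plan is to construct an explicit contraction sequence for $G^s$ of width at most $4$. Fix an ordering $v_1, \ldots, v_n$ of $V(G)$, and for each edge $e = v_iv_j$ with $i<j$ label the internal vertices of its subdivision path as $x_1^e, \ldots, x_s^e$, where $x_1^e$ is adjacent to $v_i$ and $x_s^e$ is adjacent to $v_j$. The goal is to collapse each $P_e$ into its endpoints in a carefully scheduled way that keeps the red degree bounded by $4$ throughout.

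The main building block is a \emph{doubling schedule} on each path: at level $t \in \{0,1,\ldots,\lceil \log n \rceil\}$, merge pairs of adjacent super-vertices of size $2^t$ on each $P_e$, working symmetrically from the two endpoints toward the middle. The hypothesis $s \geq 2\log n$ provides exactly the slack needed for such a schedule to fit inside each path, since each half of the path admits $\log n$ doubling levels. At every intermediate step, the surviving super-vertices on each $P_e$ form a subpath in the trigraph linked by red edges, so every purely internal super-vertex has red degree at most $2$.

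The main difficulty is controlling the red degree at an original vertex $v \in V(G)$: a naive scheme would leave $v$ red-adjacent to one super-vertex per incident edge, giving red degree $\deg_G(v)$, which can be as large as $n-1$. The resolution is to interleave the per-path doubling with a \emph{binary-tree absorption schedule} at each $v$: the incident edges of $v$ are processed in a balanced binary order so that the super-vertex of $P_e$ closest to $v$ is absorbed into $v$ before too many red edges accumulate. Matching the $\log n$ doubling levels with the $\log n$-depth binary tree is precisely why $s \geq 2\log n$ suffices, and a careful bookkeeping shows that at any moment $v$ has at most $4$ incident red edges: two corresponding to the current branch of the absorption tree, and two from the sibling branch not yet processed.

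The main obstacle I expect is synchronising these two schedules consistently: one must ensure that the super-vertex intended to be absorbed into $v$ at a given time has indeed been formed by the doubling schedule on its path, and that contractions in the middle of a path do not trigger cascading red-degree violations at a different endpoint. Since this statement is cited to \cite{twwleq4NPharda}, I would follow their explicit schedule and invariant analysis, in which the $2\log n$ buffer allows the per-path and per-endpoint schedules to be aligned level by level, certifying the bound $\tww(G^s)\leq 4$.
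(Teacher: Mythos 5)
The paper does not prove this lemma; it quotes it from the cited NP-hardness paper, so your proposal has to be measured against the known argument rather than anything in this text. As written, it has a genuine gap in both of its components. First, the ``doubling schedule'' does not explain the hypothesis $s\geq 2\log n$: contracting a single subdivision path internally needs no length assumption at all, since repeatedly merging consecutive (super-)vertices of one path keeps every red component a subpath and hence the red degree at most $2$, for paths of arbitrary length. (Also, a half-path with $\log n$ vertices admits only about $\log\log n$ doubling levels, not $\log n$.) The bound $s\geq 2\log n$ is needed for an entirely different reason: each half of the path must be long enough to encode in binary the index of the opposite endpoint of its edge, because the real difficulty is merging the $\deg_G(v)$ half-paths incident to a common original vertex $v$ \emph{with each other}.

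Second, the ``absorption into $v$'' step fails outright. In the quotient trigraph, as soon as the part containing $v$ also contains an internal vertex of one incident path $P_e$, that part is adjacent to the first vertex of every other incident path $P_{e'}$ through $v$ but not through the absorbed vertex, so all $\deg_G(v)-1$ of those edges turn red simultaneously; no ordering of the absorptions avoids this, and the claimed invariant of at most $4$ red edges at $v$ cannot hold once $\deg_G(v)\geq 6$ (the lemma is stated for arbitrary $G$, and your own setup allows degree up to $n-1$). The actual proof never contracts path vertices into the original vertices until the surrounding structure has collapsed; instead, for each $v$ it merges the near halves of the incident paths with one another, organized as a binary trie of depth $\log n$ indexed by the binary representation of the opposite endpoint, and the red-degree bound $4$ comes from the bounded branching of that trie together with the one edge toward $v$. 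That mechanism is absent from your proposal, and your closing sentence (``I would follow their explicit schedule and invariant analysis'') defers exactly the part that needs to be supplied.
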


Since two graphs are isomorphic precisely if their subdivisions are isomorphic,
it follows that the general graph isomorphism problem
can be reduced to the graph isomorphism problem on the
 class of graphs of twin-width at most \(4\).
In particular, if the WL-dimension of twin-width \(4\) graphs were bounded, then graph isomorphism would be polynomial-time solvable.
In this section, we prove the dimension to be unbounded.

\begin{lemma} \label{tww4-logics}
    If $G$ and $H$ are graphs with $G \equiv_{C_{2k}} H$, then $G^1 \equiv_{C_k} H^1$.
\end{lemma}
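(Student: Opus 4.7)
I will use the bijective pebble game characterization: $G \cequiv{n} H$ if and only if Duplicator wins $\bp{n}$ on $(G,H)$. Hence it suffices to show that a winning Duplicator strategy in $\bp{2k}$ on $(G,H)$ yields a winning Duplicator strategy in $\bp{k}$ on $(G^1, H^1)$.

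The central observation is that each vertex of $G^1$ is either an \emph{original} vertex $u\in V(G)$ or a \emph{subdivision} vertex $s_{ab}$ representing an edge $ab\in E(G)$. A pebble on an original vertex of $G^1$ will correspond to one pebble in an auxiliary $\bp{2k}$ game on $(G,H)$, while a pebble on a subdivision vertex $s_{ab}$ will correspond to two auxiliary pebbles, one on each endpoint $a,b$. A position with at most $k$ pebbles on $(G^1, H^1)$ therefore lifts to at most $2k$ pebbles on $(G,H)$, fitting the $\bp{2k}$ budget.

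Duplicator in $\bp{k}$ maintains the invariant that the current lifted auxiliary position is Duplicator-winning in $\bp{2k}$. Pebble removal by Spoiler translates to removal of the associated one or two auxiliary pebble pairs. When Duplicator must supply a bijection $b\colon V(G^1)\to V(H^1)$, set $b|_{V(G)} = \beta$, where $\beta\colon V(G)\to V(H)$ is the winning aux bijection; for subdivisions $s_{ab}$, combine $\beta$ with the follow-up bijection $\beta^{(u)}$ that the aux strategy plays after pebbling $(u,\beta(u))$, and set $b(s_{ab}) = s_{\beta(m),\,\beta^{(m)}(M)}$, where $m, M$ are the smaller and larger endpoint of $\{a,b\}$ under a fixed arbitrary linear order on $V(G)$. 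Atomic type preservation in $(G^1,H^1)$ then follows from atomic type preservation of the lifted aux position: adjacency in $G^1$ between an original $u$ and a subdivision $s_{ab}$ reduces to the condition $u\in\{a,b\}$, which is captured by aux-game equalities, and $\beta(m)\beta^{(m)}(M)\in E(H)$ whenever $mM\in E(G)$ by aux-game atomic type preservation.

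The main technical obstacle is verifying that $b$ is a well-defined bijection on subdivision vertices, i.e.\ that the induced map $E(G)\to E(H)$ is a bijection. Cardinalities match since $G\cequiv{2k} H$ implies $|E(G)| = |E(H)|$ (edge count is $C_2$-definable and $2k\geq 2$), so it suffices to establish injectivity. This can be done by choosing the aux Duplicator strategy coherently so that the choice of endpoint order is consistent across edges, or more robustly by applying Hall's theorem to the bipartite graph in which $\{a,b\}\in E(G)$ is connected to every $\{a',b'\}\in E(H)$ such that the lifted aux position extended by $(a,a'),(b,b')$ is Duplicator-winning; the freedom to lift either endpoint first gives enough edges in this graph for Hall's condition to hold.
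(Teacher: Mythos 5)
Your route is genuinely different from the paper's: the paper obtains \(G^1\) from \(G\) via a \(C_{2k}\)-definable \(2\)-dimensional interpretation and invokes Otto's transfer lemma, whereas you simulate \(\bp{2k}(G,H)\) inside \(\bp{k}(G^1,H^1)\) directly. The skeleton of the simulation is sound: one auxiliary pebble per original vertex and two per subdivision vertex fits the budget (before a bijection step at most \(k-1\) pebbles are placed, lifting to at most \(2k-2\), leaving room for the two-step look-ahead), and atomic types in the subdivisions reduce to equalities and memberships among lifted vertices because \(G^1\) has no original--original and no subdivision--subdivision edges.

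The gap sits exactly where you locate it, and neither proposed fix closes it as stated. The map \(\{a,b\}\mapsto\{\beta(m),\beta^{(m)}(M)\}\) can fail to be injective when two edges have distinct smaller endpoints: nothing prevents \(\beta(m)=\beta^{(m')}(M')\) and \(\beta^{(m)}(M)=\beta(m')\), since the follow-up bijections need not agree with \(\beta\) off the pebbled vertices, and ``choosing the strategy coherently'' is not a construction. For the Hall's-theorem variant, note that the compatibility graph is a disjoint union of complete bipartite graphs, one for each equivalence class of pairs under ``the extended auxiliary position is Duplicator-winning''; hence Hall's condition is \emph{equivalent} to the assertion that each such class contains equally many edges of \(G\) and of \(H\). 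This is a counting statement that does not follow from ``the freedom to lift either endpoint first'' and needs its own proof. It is true, and the missing argument is: for a fixed class \(\tau\) and each \(a\in V(G)\), the follow-up bijection \(\beta^{(a)}\) witnesses that \(a\) and \(\beta(a)\) have the same number of partners \(b\) with \((a,b)\) in class \(\tau\); summing over the bijection \(\beta\) yields equal class sizes in \(V(G)^2\) and \(V(H)^2\). One must then still descend from ordered to unordered pairs (pairing each class with its swap) and check that the resulting matching can be chosen to extend the assignments already forced by pebbled subdivision vertices. This counting step is precisely the content of the interpretation lemma the paper uses as a black box; once you supply it, your proof goes through.
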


\begin{proof_sketch}
	1-subdivisions can be defined via 2-dimensional logical interpretations.
	A detailed proof as well as preliminaries on interpretations are in
	Appendix~\labelcref{appendix:interpretations}.
\end{proof_sketch}

This result can be extended to $s$-subdivisions
for $s>1$.

\begin{theorem} \label{tww4-pebbles-restated}
Let $s \in \N$, and $G,H$ be graphs
with $V(G) = V(H)$.
Then for all $k$, if $2k$-WL does not distinguish $G^{3}$
and $H^{3}$,
then $k$-WL does not distinguish $G^s$ and $H^s$.
\end{theorem}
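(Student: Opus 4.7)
The plan is to argue via the bijective pebble game characterization of WL indistinguishability recalled in the preliminaries. Concretely, I would show that if Duplicator has a winning strategy in the bijective \((2k+1)\)-pebble game on \(G^{3}\) and \(H^{3}\), then Duplicator has a winning strategy in the bijective \((k+1)\)-pebble game on \(G^{s}\) and \(H^{s}\). Both graphs are subdivisions of the same underlying graph, and the only essential difference is the length of the internal paths replacing edges, so one can hope to transfer a strategy on the shorter subdivisions to a strategy on the longer ones.

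The core technical idea is a one-or-two pebble encoding. Every vertex \(v\) of \(G^{s}\) is either a branch vertex (an element of \(V(G)\subseteq V(G^{s})\)), or lies on a unique subdivided path corresponding to an edge \(e=\{u,u'\}\in E(G)\), at some position \(i\in[s]\). I would encode such a vertex by at most two ``shadow'' vertices in \(G^{3}\) on the same edge \(e\), namely subdivision vertices that capture whether \(v\) is adjacent to \(u\), adjacent to \(u'\), or lies strictly between them, while branch vertices are encoded by themselves. Adjacency in \(G^{s}\) is a local property — two vertices are adjacent only if they lie on the same edge at consecutive positions — so the atomic type of a tuple in \(G^{s}\) is recoverable from the shadow encoding together with the bookkept position class.

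Given this encoding, Duplicator in the goal game maintains a shadow game on \(G^{3},H^{3}\) using the assumed winning strategy, and constructs its bijection \(V(G^{s})\to V(H^{s})\) from the shadow bijection \(V(G^{3})\to V(H^{3})\): branch vertices are mapped directly, while subdivision vertices on an edge \(e\in E(G)\) are mapped to subdivision vertices on the corresponding image edge in a position-preserving manner. When Spoiler picks a vertex \(v\in V(G^{s})\), this translates into up to two picks in the shadow game, and Duplicator's response is dictated by the shadow strategy. The atomic type in the goal game is then forced by the atomic type of the shadow configuration, so Duplicator survives every round.

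The main obstacle I anticipate is pebble accounting. A naive two-to-one encoding would use \(2(k+1)=2k+2\) shadow pebbles to represent \(k+1\) goal pebbles, exceeding the \(2k+1\) budget by one. Resolving this will require exploiting the round structure and sharing: since Spoiler places at most one new pebble per round, Duplicator may temporarily maintain only one shadow pebble for a freshly placed goal pebble and upgrade to two after a consolidation or removal, and pebbles on the same subdivided edge can share shadow pebbles encoding that edge. Carrying out this refined bookkeeping while verifying that the shadow winning strategy still translates into a Duplicator winning strategy on the goal game is where I expect the bulk of the technical work to lie.
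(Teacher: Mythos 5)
Your proposal is essentially the paper's proof: the paper likewise encodes each pebble on $G^s$ by two shadow pebbles on $G^{3}$ (the middle vertex $s_{e2}$ of the corresponding subdivided edge plus one endpoint $s_{e1}$ or $s_{e3}$ recording the ``direction''), determines that direction by querying the shadow Duplicator with a one-move look-ahead, and maps subdivision vertices along an edge in a position-preserving (or position-reversing) manner, with branch vertices mapped directly. The pebble accounting you flag as the main obstacle is exactly the factor-of-two loss that the statement's $2k$ versus $k$ is there to absorb; the paper handles it by simply reserving shadow indices $2i$ and $2i+1$ for the $i$-th goal pebble rather than by any sharing trick.
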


\begin{proof_sketch}
	A full proof for this can be found in Appendix~\labelcref{tww4:pebbleproof}.
	Intuitively, Duplicator extends its strategy for 3-subdivided graphs to $s$-subdivided
	graphs by copying the bijection
	on the original vertices,
	then looking ahead one move
	to see the “direction” that the path
	vertices should be mapped in in the original game,
	and adapting the bijection accordingly.
\end{proof_sketch}

By applying \cref{tww4-logics} twice
and then \cref{tww4-pebbles-restated},
we obtain the following result.

\begin{corollary}\label{pebblecorollary}
Let $s,k \in \mathbb{N}$ and let $G$ and $H$ be graphs.
If $G \cequiv{8k} H$, then $G^{s} \cequiv{k} H^{s}$.
\end{corollary}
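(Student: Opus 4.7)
The plan is to chain Lemma~\ref{tww4-logics} twice to pass from $G,H$ to their $3$-subdivisions, and then to invoke Theorem~\ref{tww4-pebbles-restated} in order to reach arbitrary $s$-subdivisions.

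The first ingredient is the elementary combinatorial identity $(G^1)^1 \cong G^3$: each $1$-subdivision replaces every edge by a path of length two, so iterating the operation turns each original edge into a path of length four, which is precisely a $3$-subdivision. Hence, repeated applications of Lemma~\ref{tww4-logics} stay within the subdivision framework. Starting from the hypothesis $G \equiv_{C_{8k}} H$, I apply Lemma~\ref{tww4-logics} with parameter $4k$ to obtain $G^1 \equiv_{C_{4k}} H^1$, and then apply it again to the pair $(G^1, H^1)$ with parameter $2k$ to conclude $G^3 \equiv_{C_{2k}} H^3$.

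It remains to transfer this equivalence from the $3$-subdivision to the $s$-subdivision. Using the $k$-WL to $C_{k+1}$ correspondence (Theorem~5.2 of~\cite{cfi}) together with monotonicity in the number of variables, $G^3 \equiv_{C_{2k}} H^3$ implies that $(2k-2)$-WL does not distinguish $G^3$ and $H^3$. Invoking Theorem~\ref{tww4-pebbles-restated} with $k-1$ in place of $k$ then yields that $(k-1)$-WL does not distinguish $G^s$ and $H^s$, which by the same logic-to-WL correspondence is equivalent to $G^s \equiv_{C_k} H^s$, as required.

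The one mild subtlety is the assumption $V(G)=V(H)$ appearing in Theorem~\ref{tww4-pebbles-restated}: for $k\geq 1$, the relation $G \equiv_{C_{8k}} H$ already forces $|V(G)|=|V(H)|$, so we may rename vertices and assume a common vertex set (the case $k=0$ is vacuous). I do not expect any genuine obstacle in this proof beyond bookkeeping for the off-by-one shifts between the logic and WL indexings; the bound $8k$ is comfortably large enough to absorb them, and in fact a slightly tighter constant would also suffice.
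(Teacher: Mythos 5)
Your proposal is correct and follows exactly the route the paper intends: two applications of \Cref{tww4-logics} (using $(G^1)^1\cong G^3$) followed by \Cref{tww4-pebbles-restated}, with the WL-versus-$C_k$ off-by-one and the $V(G)=V(H)$ renaming handled properly. In fact your bookkeeping is more careful than the paper's one-line derivation.
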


Finally, this gives us our theorem.

\begin{theorem}
For all $k \in \N$, there are two non-isomorphic graphs of twin-width \(4\)
and size $\mathcal{O}(k \log k)$ that
are not distinguished by $k$-WL.
\end{theorem}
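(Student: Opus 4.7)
The plan is to combine the CFI construction (\Cref{cfi}) with the twin-width bound on long subdivisions (\Cref{tww4-subdivisions}) and the preservation of $C$-equivalence under subdivision (\Cref{pebblecorollary}), tuning the parameters so that all three conclusions fire simultaneously.

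Concretely, I would first invoke \Cref{cfi} with parameter $m \coloneqq 8k+7$ to obtain a pair of non-isomorphic cubic CFI graphs $G_k$ and $H_k$ of order $n = \mathcal{O}(k)$ that $m$-WL cannot distinguish; equivalently, $G_k \cequiv{8k+8} H_k$. I would then set the subdivision length $s \coloneqq \lceil 2\log n\rceil = \mathcal{O}(\log k)$ and form the $s$-subdivisions $G_k^s$ and $H_k^s$. Three conclusions follow at once: \Cref{tww4-subdivisions} yields $\tww(G_k^s),\tww(H_k^s)\leq 4$; since $s \geq 1$ and $s$-subdivision preserves non-isomorphism, $G_k^s \not\simeqq H_k^s$; and \Cref{pebblecorollary} applied with ambient parameter $k+1$ gives $G_k^s \cequiv{k+1} H_k^s$, which by the $C_{k+1}$/$k$-WL correspondence means that $k$-WL does not distinguish the two subdivided graphs.

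For the size bound, I would observe that each cubic CFI graph has $\tfrac{3n}{2} = \mathcal{O}(k)$ edges, and the $s$-subdivision introduces $s$ fresh vertices per edge, so $|V(G_k^s)| = n + \tfrac{3n}{2}\cdot s = \mathcal{O}(k) + \mathcal{O}(k\log k) = \mathcal{O}(k\log k)$, matching the claimed size.

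The main challenge here is not mathematical — all the heavy lifting is done in \Cref{cfi}, \Cref{tww4-subdivisions}, and \Cref{pebblecorollary} — but rather parameter bookkeeping: the factor-$8$ blow-up forced by \Cref{pebblecorollary} must be absorbed into the CFI dimension, while at the same time $n$ must stay $\mathcal{O}(k)$ so that the subdivision length $s = \lceil 2\log n\rceil$ mandated by \Cref{tww4-subdivisions} remains $\mathcal{O}(\log k)$. Choosing the CFI parameter linear in $k$ threads this needle and delivers the total size $\mathcal{O}(k\log k)$.
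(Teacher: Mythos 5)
Your proposal is correct and follows essentially the same route as the paper's proof: CFI graphs from \Cref{cfi} with a linearly scaled parameter, $\Theta(\log n)$-subdivision to force twin-width at most $4$ via \Cref{tww4-subdivisions}, and \Cref{pebblecorollary} to transfer indistinguishability, with the cubic degree bounding the edge count and hence the total size by $\mathcal{O}(k\log k)$. Your parameter choice $8k+7$ is in fact slightly more careful than the paper's $8k$ about the off-by-one between $C_{k+1}$-equivalence and $k$-WL, but this only affects constants.
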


\begin{proof}
Take the two graphs $G_{8k},H_{8k}$ from \cref{cfi},
and denote their size by $n \coloneqq |V(G_{8k})|$.
Note that $n \in \mathcal{O}(k)$.
Let $s \coloneqq 2 \lceil \log n \rceil$
and consider $G_{8k}^s$ and $H_{8k}^s$.
Since $G_{8k}$ and $H_{8k}$ are cubic,
their number of edges is in $\mathcal{O}(k)$.
In total, the size of their subdivisions is in
$\mathcal{O}(k+ks) = \mathcal{O}(k \log k)$.
By \cref{tww4-subdivisions},
both of them have twin-width at most \(4\),
and since $G_{8k}$ and $H_{8k}$ are not isomorphic,
their subdivisions are also not isomorphic.
Finally, we know by \cref{pebblecorollary} that Duplicator wins
the $k$-pebble game on
the subdivisions,
and therefore $k$-WL does not distinguish the two graphs.
\end{proof}
\newcommand{\cs}{\operatorname{cs}}
\newcommand{\CS}{\operatorname{CS}}
\newcommand{\itp}{\operatorname{itp}}
\newcommand{\ntw}{\operatorname{ntw}}
\newcommand{\ModTree}{\mathrm{ModTree}}

\section{Twin-width 1}

We now know that the class of graphs of twin-width 4 does not have bounded
WL-dimension.
The pressing question is whether the dimension of classes of smaller twin-width is bounded.
The class of twin-width 0 graphs,
i.e., the class of cographs, has WL-dimension 2, see~\cite{2wl-cographs}.
The following is known about twin-width 1 graphs.

\begin{lemma}[\cite{tww_polynomial_kernels}]\label{lem:tww1-sequence}
Graphs of twin-width 1 have a 1-sequence
where there is a single red edge at every step.
\end{lemma}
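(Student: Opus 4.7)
The plan is to prove the following strengthened claim by induction on \(|V(T)|\): every trigraph \(T\) with at most one red edge that admits some \(1\)-contraction sequence also admits a \(1\)-contraction sequence in which every intermediate trigraph carries at most one red edge. The lemma is then the special case when \(T\) is an ordinary graph, and the base case \(|V(T)| = 1\) is trivial.

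If \(T\) has no red edge, the first contraction \(c_1\) of any \(1\)-sequence of \(T\) produces a trigraph in which only the newly merged part can carry red edges, and its red degree is at most \(1\) by the width bound. The induction hypothesis applied to this trigraph, with \(c_1\) prepended, handles this case.

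If \(T\) has a unique red edge \((x, y)\), I would look for a \emph{safe} first contraction, i.e.\ one whose execution leaves at most one red edge in the result. Three types of first contraction are safe: (i) contract \(\{x, y\}\) itself; (ii) merge \(\{x\}\) (or \(\{y\}\)) with a singleton that agrees with \(x\) (resp. \(y\)) on black adjacencies to every part other than \(y\) (resp. \(x\)); or (iii) merge two singletons disjoint from \(\{x, y\}\) that are twins in \(T\). To establish that a safe first contraction always exists, I would fix any \(1\)-sequence \(\sigma\) of \(T\) and examine its first contraction \(c_m\) that involves \(x\) or \(y\). Since \(\{x\}\) has red degree at most \(1\) in every intermediate trigraph of \(\sigma\) and the existing red edge \((x, y)\) already saturates this capacity, \(c_m\) must be of type (i) or (ii); moreover both of the parts merged by \(c_m\) are still singletons in \(T\) because \(\sigma\) did not touch \(x\) or \(y\) before step \(m\), so \(c_m\) can be applied directly to \(T\). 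After \(c_m\) the trigraph has at most one red edge, and the induction hypothesis closes this subcase.

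The main obstacle is to argue that pulling \(c_m\) to the front of \(\sigma\) is consistent with the width bound on the remaining steps. The decisive structural observation is that in a \(1\)-sequence the red components are pairs of vertices, so the contractions \(c_1, \dots, c_{m-1}\) of \(\sigma\) operate on parts disjoint from \(\{x, y\}\) and their red degrees depend only on adjacencies between those parts and the rest of the vertex set. After prepending \(c_m\), the only change is that the part containing \(x\) (and possibly \(y\)) has been enlarged in a controlled way; I would verify case by case that this enlargement does not create any new red edges at the outside parts touched by \(c_1, \dots, c_{m-1}\), so all subsequent steps remain valid. This independence of red-edge life-times on disjoint vertex sets is the point at which the width bound \(1\) is essential and where the technical care of the argument lies.
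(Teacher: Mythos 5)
This statement is imported by the paper from \cite{tww_polynomial_kernels} without proof, so your attempt can only be judged on its own terms, and there it has a genuine gap in the key existence step. The overall inductive framing (strengthening to trigraphs with at most one red edge) is reasonable, but the argument that a safe first contraction exists is broken. You claim that the first contraction $c_m$ of $\sigma$ touching $x$ or $y$ is of type (i) or (ii), involves only singletons, and can therefore be pulled to the front of the sequence. Two things go wrong. First, the part merged with $\{x\}$ at step $m$ need not be a singleton: only the parts \emph{containing} $x$ and $y$ are guaranteed to be singletons at that time, while the partner part may have been assembled by $c_1,\dots,c_{m-1}$. Take $V(T)=\{x,y,a,b\}$ with red edge $xy$ and the single black edge $ab$: a valid $1$-sequence contracts $\{a,b\}$, then $\{a,b,x\}$, then the rest, so the first contraction touching $x$ merges $\{x\}$ with the non-singleton $\{a,b\}$; the substitute contraction of $\{x\}$ with $\{a\}$ in $T$ has red degree $2$ (to $\{y\}$ and to $\{b\}$). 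Second, type (i) is not unconditionally safe when moved to the front: with $V(T)=\{x,y,a,b\}$, red edge $xy$ and black edges $xa$, $xb$, $ab$, the valid sequence is $\{a,b\}$, then $\{x,y\}$, then the rest, but contracting $\{x\}$ with $\{y\}$ \emph{first} in $T$ creates red edges to both $\{a\}$ and $\{b\}$, violating width $1$. In both examples the lemma survives because the earlier contraction $\{a,b\}$ is itself safe, but your case analysis never reaches that option, and when the first contraction of $\sigma$ itself creates a second red edge away from $xy$ (so that it is not of your type (iii)) you are left with no candidate at all.

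What is missing is therefore precisely the substance of the cited result: a proof that a trigraph with one red edge admitting a $1$-sequence always has \emph{some} first contraction after which at most one red edge remains, together with a verification that the residual trigraph still admits a $1$-sequence. You flag the reordering of the tail of $\sigma$ as the main obstacle, but the obstacle appears one step earlier, in identifying a safe first move; the known argument resolves this by analyzing the structure of twin-width-$1$ graphs via modular decomposition and the behaviour of prime graphs (as sketched in Section 4 of this paper), rather than by a local exchange on an arbitrary witnessing sequence. As written, the induction does not go through.
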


\begin{lemma}[\cite{rank-width}]\label{lem:WL-dim_vs_rw}
	For every $k \in \N$,
	the (3$k$+4)-dimensional Weisfeiler-Leman algorithm identifies
	every graph of rank width at most $k$.
\end{lemma}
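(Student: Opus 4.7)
The plan is to use the bijective pebble game characterization of Weisfeiler-Leman and show that Spoiler wins $\bp{3k+5}$ on any two non-isomorphic graphs of rank-width at most $k$. Spoiler's strategy will be guided by a rank-decomposition $T$ of the smaller graph, recursively descending through the tree while pebbling only a bounded number of ``coordinate'' vertices at each active cut.

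The key linear-algebraic fact I would exploit is that for any cut $(A,B)$ with $\rk_G(A,B)\leq k$, there exist at most $k$ row-representatives $a_1,\dots,a_k\in A$ and $k$ column-representatives $b_1,\dots,b_k\in B$ whose rows (resp.\ columns) of $\adj_G(A,B)$ span the entire row (resp.\ column) space over $\F_2$. Consequently, once these $2k$ coordinates are pebbled, the adjacency of any remaining $a\in A$ and $b\in B$ across the cut is fully determined by the bit-patterns of $a$ against $b_1,\dots,b_k$ and of $b$ against $a_1,\dots,a_k$, together with the $k\times k$ ``flip matrix'' formed by the coordinate vertices themselves. Thus, if Duplicator's current bijection maps some $a_i$ to an image whose row-type in $H$ differs from its row-type in $G$, Spoiler can finish with a small constant number of additional pebbles by exhibiting the distinguishing bit; otherwise each of the two sides of the cut can be treated as a vertex-colored subgraph whose colors encode its external interface, and whose rank-width is still at most $k$.

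With this setup, Spoiler picks a cut of $T$ that roughly balances the two sides, pebbles the $2k$ coordinate vertices, checks the types, and recurses into whichever side still witnesses non-isomorphism. The central technical point, which I expect to be the main obstacle, is the pebble re-use argument: a naive recursion would consume $2k$ fresh pebbles at every level, so the dimension would blow up with the depth of $T$. The crucial saving is that from the viewpoint of the currently active cut, the outer world collapses onto the at-most-$k$-dimensional image space of the ambient adjacency map, so only $k$ of the previously placed coordinate pebbles need to be retained as ``summary'' pebbles describing the view from outside; the remaining ones can be retired and reused. Making this amortization precise — and keeping the extra pebbles used for detecting type mismatches and for bookkeeping on $T$ down to exactly $4$ — is the delicate part of the argument, and is where the constant $3k+4$ emerges (namely $k$ for the retained ancestor summary, $2k$ for the current cut's coordinates, plus a constant of $4$ for the mismatch witness and traversal of $T$).
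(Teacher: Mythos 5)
This lemma is not proved in the paper at all: it is quoted verbatim from \cite{rank-width} (Grohe and Neuen's work on canonisation for bounded rank-width), so there is no in-paper argument to compare against. Judged on its own terms, your sketch correctly isolates one genuine ingredient of the known proof -- a cut of rank at most $k$ over $\F_2$ admits $k$ row- and $k$ column-representatives whose individualization determines all cross-adjacencies, so each side splits into at most $2^k$ definable neighborhood classes -- but it has two genuine gaps that are not bookkeeping details.

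First, the recursion ``recurse into whichever side still witnesses non-isomorphism'' is not well-defined on the second graph. The rank decomposition $T$ is a decomposition of $G$ only; when Spoiler pebbles $2k$ coordinate vertices of a cut $(A,B)$ of $G$, the images of these pebbles under Duplicator's bijections do not canonically induce a cut of $H$, let alone one of rank at most $k$ with matching sides. You need an isomorphism-invariant way to recover the partition $(A,B)$ from the pebbled tuple alone (this is exactly what the ``split pair'' machinery of \cite{rank-width} supplies), and without it the phrase ``the side of the cut in $H$'' has no meaning. Second, the pebble-reuse amortization -- retaining only $k$ ``summary'' pebbles from the ancestor cut while freeing the rest -- is asserted rather than argued, and you acknowledge it is the delicate part. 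That amortization is essentially the entire content of the theorem: a naive descent costs $\Theta(k\cdot\mathrm{depth}(T))$ pebbles, and showing that the outer world collapses to a $k$-dimensional interface \emph{that Spoiler can maintain across rounds in which Duplicator re-chooses the bijection} requires a carefully stated game invariant. The fact that your accounting $k+2k+4$ reproduces the constant $3k+4$ is suggestive numerology, not evidence that the invariant can be maintained. As it stands the proposal is a plausible roadmap toward the argument of \cite{rank-width}, not a proof.
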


\begin{corollary}
Graphs of twin-width 1 have WL-dimension at most 10.
\end{corollary}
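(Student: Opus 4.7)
The plan is to combine the two lemmas immediately preceding the corollary. Since $10 = 3\cdot 2 + 4$, Lemma~\ref{lem:WL-dim_vs_rw} applied with $k=2$ reduces the statement to showing that every graph of twin-width~$1$ has rank-width at most~$2$. This rank-width bound is essentially the content of the structural result on twin-width~$1$ graphs obtained in~\cite{tww_polynomial_kernels}, which is alluded to in the introduction.

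To get at this rank-width bound, I would start from a $1$-contraction sequence of $G$ in which each trigraph contains at most one red edge, guaranteed by Lemma~\ref{lem:tww1-sequence}, and attempt to read off a rank-decomposition from the underlying binary contraction tree~$T$ (with leaves identified with $V(G)$). For every node of $T$, let $S$ be the set of leaves below it and consider the step in the sequence at which $S$ first appears as a single part. At that step, $S$ has at most one red neighbour $P^{\ast}$ in the trigraph, and every other part is homogeneously connected to $S$ and hence contributes only all-ones or all-zeros columns to $\adj_G(S, V(G) \setminus S)$. Hence
\[\rk_G(\adj_G(S, V(G) \setminus S)) \leq \rk_G(\adj_G(S, P^\ast)) + 1.\]

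The main obstacle is that $\rk_G(\adj_G(S, P^\ast))$ can already be as large as~$2$: a small explicit example is the six-vertex graph on $\{a_1,a_2,b_1,b_2,c,d\}$ with edges $a_1b_1,\,cb_1,\,cb_2$ together with a dominating vertex~$d$, which admits a $1$-contraction sequence with a single red edge at each step but for which the contraction tree produces a cut of rank~$3$. So the contraction tree by itself does not realise width~$2$, and a more careful rank-decomposition is needed — one that exploits the very restrictive modular-like structure forced by the single-red-edge invariant, for instance by peeling off parts that are universal or isolated with respect to the current red neighbour before recursing on the rest of the sequence. This refined construction is exactly what~\cite{tww_polynomial_kernels} provides, giving rank-width at most~$2$. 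Plugging this into Lemma~\ref{lem:WL-dim_vs_rw} with $k = 2$ then yields the desired WL-dimension bound of~$10$.
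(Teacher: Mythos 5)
Your high-level route is exactly the paper's: combine \Cref{lem:tww1-sequence} with \Cref{lem:WL-dim_vs_rw} for $k=2$, so that everything reduces to showing that twin-width-$1$ graphs have rank-width at most $2$. You are also right that the naive contraction tree of \emph{parts} does not realise width $2$ --- an even cleaner witness than your six-vertex example is the half-graph $H_t$ itself, which by \Cref{lem:half-graph:tww1} admits a $1$-partition sequence whose final two parts are the two sides, so the contraction tree contains a cut of rank $t$. However, at the one point where an actual argument is required you defer to~\cite{tww_polynomial_kernels}, which (as far as this paper records) establishes only \emph{bounded} rank-width, not the explicit bound $2$; the crucial step is thus asserted rather than proved. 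The paper closes this gap by passing to \emph{component twin-width}: a single red edge per step means every red component of every trigraph in the sequence consists of at most two parts, and since no red edge leaves a red component, every part outside it is homogeneously connected to each of its at most two parts, so the cut separating the vertices of a red component from the rest has rank at most $2$. One then takes the contraction tree of the \emph{red components} along the sequence, rather than of the individual parts (this is the construction in Theorem~10 of~\cite{tww6a}), which yields a rank decomposition of width $2$. Replacing your appeal to~\cite{tww_polynomial_kernels} by this red-component argument makes your proof coincide with the paper's.
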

\begin{proof}
By Lemma~\ref{lem:tww1-sequence}, graphs of twin-width 1 have component twin-width 2 and, hence, by Theorem 10 of~\cite{tww6a},
also bounded boolean width
and bounded rank-width~\cite{boolean-width}.
By inspecting the proof of Theorem 10 of~\cite{tww6a},
one can see that graphs of component twin-width \(2\)
have rank-width at most \(2\):
As rank decomposition we consider the contraction tree of the red components
along the contraction sequence of component twin-width \(2\).
The claim then follows from \Cref{lem:WL-dim_vs_rw}.
\end{proof}

Building upon the result in \cite{tww_polynomial_kernels},
one can also show a better bound much more explicitly:
Namely, graphs of twin-width 1 have WL-dimension at most 3.
First, we show this for the case of prime graphs.

\subsection{Prime graphs of twin-width 1}
A vertex $v$ of a graph $G$ is \emph{homogeneously connected} to a set of vertices $S \subseteq V(G)$
if it is adjacent to all or none of $S$ in $G$.
A set $M \subseteq G$ is a \emph{module} if all vertices $v \in V(G) \setminus M$
are homogeneously connected to $M$.
A module of a graph $G$ is \emph{trivial} if it consists of a single vertex or
all vertices of~$G$.
A graph $G$ is \emph{prime} if it has only trivial modules.

In this section,
we further examine
how the proof of \cref{lem:tww1-sequence}
shows that any graph of twin-width 1
has a 1-sequence of the following form:
\begin{enumerate}
\item Guess the first contraction of a valid 1-contraction sequence.
This creates one red edge.
\item Make every possible \enquote{safe} contraction,
that is,
contractions into the ends of the current red edge that create no additional red edges.
\item Contract the ends of the red edge.
If there is only one vertex left, terminate. Otherwise, this will create
exactly one more red edge. Repeat Step~2.
\end{enumerate}

This contraction sequence is \emph{canonical}
in the sense that given the starting
contraction, there is a description
of it that is an invariant
for colored prime graphs of twin-width~1
such that if the invariant is the same on two graphs,
the graphs are isomorphic.
Given a colored prime graph $G^c$ of twin-width~1
and two vertices $u,v$ in it,
we define the function $\cs(G^c,u,v)$ which returns
a description of this sequence starting from contracting
$u$ and $v$ if it exists, and $\texttt{f}$ if it doesn't.
For input graphs with a single vertex,
return the color of that vertex. Otherwise:

\begin{enumerate}
\item Set $i \coloneqq 0$. If $uv \in E(G)$, set
$\mathrm{cs}_0 \coloneqq \texttt{1}c(u)c(v)$, otherwise set $\mathrm{cs}_0 \coloneqq \texttt{0}c(u)c(v)$.
\item Set $G_0 \coloneqq (V(G),E(G),\emptyset)$, set
$u_{G,0} \coloneqq u$, and $v_{G,0} \coloneqq v$.
Next, iterate the following two steps:
\begin{enumerate}
\item If $|V(G_i)| = 2$, then
terminate with $\cs(G^c,u,v) \coloneqq \mathrm{cs}_i$.
Otherwise, let $W$ be the set of vertices in $V(G_i)$ adjacent to
exactly one of $u_{G,i}$ and $v_{G,i}$.
If $|W| \neq 1$, then terminate with $\cs(G^c,u,v) \coloneqq \texttt{f}$.
Otherwise, let $w \in W$,
and set $\mathrm{cs}_{i+1} \coloneqq \mathrm{cs}_{i}
\texttt{\#r0}c(w)$ if $wv_{G,i} \in E(G_i)$,
and $c_{i+1} \coloneqq \mathrm{cs}_{i}
\texttt{\#r1}c(w)$ if $wu_{G,i} \in E(G_i)$.
Set $G_{i+1} \coloneqq G_i/u_{G,i},v_{G,i}$,
set $u_{G,i+1}$ to be the vertex resulting from the contraction,
and $v_{G,i+1} \coloneqq w$. Increase $i$ by one and go to step 2b.
\item
For this step, we define \emph{near-twins}
with a function $\ntw_{G}^{(t,t'),x,c^*}(u,v)$ where $G$ is a trigraph
with some coloring $c$,
$u$ and $v$ are vertices in $G$,
$t$ and $t'$ are atomic types, $x \in \{u,v\}$, and~$c^*$ is a color.
Then the function is defined via
\begin{align*}
	\ntw_G^{(t,t'),x,c^*}(u,v) \coloneqq \{w \in V(G) \mid \,
	&\atp_G(u,w) = t,\atp_G(v,w) = t', \\
	&G/w,x = G \setminus \{w\}, \\
	&c(w) = c^* \}.
\end{align*}
Further, define $t_1$ as the atomic type for an edge
and $t_0$ the atomic type for a non-edge.
Let $k$ be the number of colors used by $c$.
Set $n_i^{(t,t'),x,c} \coloneqq |\ntw_{G_i}^{(t,t'),x,c}(u_{G_i},v_{G_i})|$, and
$\mathcal{T} \coloneqq ((t_0,t_0),(t_0,t_1),(t_1,t_0),(t_1,t_1))$.
We set
\begin{align*}
n_i^x &\coloneqq
n_i^{\mathcal{T}_1,x,1};\dots;n_i^{\mathcal{T}_4,x,1};
\dots;n_i^{\mathcal{T}_4,x,k},\\
\mathrm{cs}_{i+1} &\coloneqq \mathrm{cs}_i
\texttt{\#}n_i^{u_{G,i}}\texttt{;}n_i^{v_{G,i}}.
\end{align*}
Let $G_{i+1}$ be the trigraph obtained from $G_i$ by 
performing all the safe contractions as described above.
From here on, each of this series of contractions is referred to
as a \emph{contraction phase}.
Set $u_{G,i+1} \coloneqq u_{G,i}$, and $v_{G,i+1} \coloneqq v_{G,i}$.
Increase $i$ by one and go to the next iteration.
\end{enumerate}
\end{enumerate}
\noindent
In the $i$-th iteration,
the red edge in the trigraph $G_i$ is always between $u_{G,i}$ and $v_{G,i}$,
and as such the sequence of trigraphs $G_i$ and vertices $u_{G,i}$ and $v_{G,i}$
represents the canonical contraction sequence starting at $u$ and $v$.
This function is polynomial-time computable
and the runtime is $\mathcal{O}(|V|^3)$.
We can show:

\begin{lemma}\label{csguv}
Let $G^c=(G,c_G)$, and $H^c=(H,c_H)$
be two colored prime graphs of twin-width~1.
The two graphs are isomorphic if and only if
for some $u,v \in V(G)$ and $u',v' \in V(H)$
it holds that
$\cs(G^c,u,v) = \cs(H^c,u',v') \neq \texttt{f}$.
\end{lemma}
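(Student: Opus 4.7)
We prove both directions of the biconditional.

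\textbf{($\Rightarrow$)} Suppose $G^c$ and $H^c$ are isomorphic via some isomorphism $\phi$. By \Cref{lem:tww1-sequence} and the discussion preceding the definition of $\cs$, $G$ admits a $1$-contraction sequence in which each step creates exactly one red edge; let $(u,v)$ be its first contracted pair and set $(u',v') \coloneqq (\phi(u),\phi(v))$. A straightforward induction on the iteration index $i$ shows that the trigraphs $G_i$ and $H_i$ produced by the $\cs$ algorithm on $(G^c,u,v)$ and $(H^c,u',v')$ are isomorphic as colored trigraphs with distinguished vertices, via the natural extension $\phi_i$ of $\phi$ satisfying $\phi_i(u_{G,i}) = u_{H,i}$ and $\phi_i(v_{G,i}) = v_{H,i}$. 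Since every quantity recorded by $\cs$ at each step---the color and atomic types of the witness vertex $w$ to $u_{G,i}, v_{G,i}$, and the near-twin counts $n_i^{(t,t'),x,c^*}$---is invariant under such colored-trigraph isomorphisms fixing the distinguished pair, the two computations produce identical strings. The assumption that the contraction sequence creates exactly one red edge at each step guarantees $|W| = 1$ throughout, so $\cs$ never returns $\texttt{f}$.

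\textbf{($\Leftarrow$)} Assume $\cs(G^c,u,v) = \cs(H^c,u',v') = s \ne \texttt{f}$. We construct an isomorphism $\phi\colon V(G) \to V(H)$ by running both canonical contraction sequences in lockstep, maintaining as a loop invariant a trigraph isomorphism $\pi_i\colon G_i \to H_i$ with $\pi_i(u_{G,i}) = u_{H,i}$ and $\pi_i(v_{G,i}) = v_{H,i}$, together with a compatible bijection between the original vertices contained in each pair of matched super-vertices. Initially, set $\phi(u) = u'$ and $\phi(v) = v'$. In step 2a, the matching values in $s$ force the unique witness vertices $w \in V(G_i)$ and $w' \in V(H_i)$ to share color and atomic types, so we extend $\phi$ by $\phi(w) = w'$. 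In step 2b, the matching counts $n_i^{(t,t'),x,c^*}$ ensure that each near-twin class in $G_i$ has the same cardinality as the corresponding class in $H_i$, and we extend $\phi$ by choosing any bijection between the two.

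The crucial combinatorial observation underpinning this construction---and the main technical hurdle---is that in a prime graph of twin-width~$1$ traversed by a $1$-sequence of the form described, every near-twin class $(t,t',x,c^*)$ contains at most one vertex. To see this, let $w_1, w_2$ be two same-class near-twins of $u_{G,i}$ in $G_i$. In a $1$-sequence the only red edge is between $u_{G,i}$ and $v_{G,i}$, so every trigraph edge incident to $w_1$ or $w_2$ is either black or a non-edge; this translates into uniform adjacency in $G$ from $w_1$ (and from $w_2$) to every original vertex contained in any particular super-vertex of $G_i$. Combined with the matching atomic types to $u_{G,i}, v_{G,i}$ and the near-twin condition (which forces equality of trigraph adjacencies to all other super-vertices), one concludes that $w_1$ and $w_2$ have identical adjacencies in $G$ to every vertex outside $\{w_1,w_2\}$, making $\{w_1,w_2\}$ a nontrivial module of $G$ and contradicting primeness. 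Given this uniqueness, the bijection $\phi$ above is fully determined by $s$, and the invariant that $\pi_i$ is a trigraph isomorphism guarantees that $\phi$ preserves adjacency: for any $x, y \in V(G)$, the relation $xy \in E(G)$ is already visible as a black edge (respectively non-edge) between two distinct super-vertices of $G_i$ at the first iteration where $x$ and $y$ are separated, and this is transported to $H$ by $\pi_i$. The bulk of the technical work lies in the singleton-class argument, which requires a careful accounting of how near-twins, atomic types, and the single red edge interact at each iteration of the canonical sequence.
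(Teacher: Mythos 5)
Your overall strategy matches the paper's: the forward direction is the (routine) isomorphism-invariance of $\cs$, and the backward direction builds the isomorphism by running the two canonical sequences in lockstep, matching the step-2a witnesses and the near-twin classes. Where you genuinely diverge is your ``crucial combinatorial observation'' that in a prime graph every near-twin class is a singleton. That claim is correct: two near-twins $w_1,w_2$ of the same class are singleton super-vertices with no incident red edge, hence homogeneous to every other part of $G_i$; they agree on all parts outside $\{w_1,w_2,x\}$ by the near-twin condition and on $x\in\{u_{G,i},v_{G,i}\}$ by the recorded atomic types, so $\{w_1,w_2\}$ is a $2$-element module, contradicting primeness. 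The paper does \emph{not} use this; it allows multi-element classes, picks an arbitrary color-respecting bijection between matching classes, and verifies separately (its Case~2) that any such choice preserves adjacency, because the mutual adjacency of two same-class near-twins is forced to equal their recorded adjacency to the target $x$. Your observation buys uniqueness of $\phi$ and eliminates that case entirely; the paper's route is longer but would also survive in settings where classes are not singletons.

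The step you should tighten is adjacency preservation. ``Visible at the first iteration where $x$ and $y$ are separated'' is the wrong quantifier: read literally it points to iteration $0$, where it is vacuous (that is the isomorphism you are trying to build); read as ``the last iteration before they merge,'' it fails precisely when the two super-vertices about to merge are the endpoints of the red edge, since a red edge does not determine the bipartite graph between its endpoints. These red pairs are exactly what the paper's Cases~1 and~4 treat separately: the adjacency of the initial pair $(u,v)$ is recovered from the explicit bit in $\mathrm{cs}_0$, and the adjacency of a step-2a witness $w$ to the two sides is recovered from the \texttt{r0}/\texttt{r1} flag together with the fact that $w$ is homogeneous to each of $u_{G,i}$ and $v_{G,i}$ separately in $G_i$. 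The correct statement is that for every pair $x,y$ \emph{other than the initial pair} there is some iteration at which their super-vertices are distinct and joined by a non-red edge (namely, just before the later of the two joins its side), and that edge is transported by $\pi_i$. With that repair your loop-invariant argument goes through and the proof is sound.
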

\begin{proof_sketch}
Because the canonical contraction sequence
defines an order on the vertices of a graph,
it is clear how to derive an isomorphism
between two graphs with the same canonical contraction sequence.
The full proof can be found in Appendix \labelcref{cssequence}.
\end{proof_sketch}

\begin{theorem}\label{tww1-prime-invariant}
There is a polynomial-time computable
invariant $I$ for colored prime graphs of twin-width 1
such that for any graphs $G,H$,
if $I(G) = I(H)$ then $G \cong H$.
\end{theorem}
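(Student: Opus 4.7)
\begin{proof_sketch}
The plan is to bundle together the data produced by \(\cs\) over all starting pairs into a single isomorphism invariant. Concretely, I would define
\[
	I(G^c) \;\coloneqq\; \multil[\bigl]\; \cs(G^c,u,v) \;:\; u,v \in V(G),\; \cs(G^c,u,v)\neq\texttt{f} \;\multir[\bigr],
\]
i.e.\ the multiset of all non-failing canonical-sequence descriptions, sorted lexicographically so that the resulting string is well-defined. (Equivalently one could take the lexicographically minimal non-\(\texttt{f}\) value of \(\cs\); either choice works for the statement.)

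First I would verify that \(I\) is polynomial-time computable: there are \(O(n^2)\) ordered pairs \((u,v)\), each invocation of \(\cs\) runs in \(O(n^3)\) time as noted immediately after its definition, and sorting the resulting multiset of length-\(O(n)\) strings is polynomial, giving an overall \(O(n^5)\) bound. Second I would check that \(I\) is genuinely an invariant: any isomorphism \(\phi\colon G^c\to H^c\) identifies the sequence of trigraphs and the ``safe contraction'' counts \(n_i^{(t,t'),x,c^*}\) computed when starting from \((u,v)\) with those computed when starting from \((\phi(u),\phi(v))\), so \(\cs(G^c,u,v)=\cs(H^c,\phi(u),\phi(v))\) and the two multisets agree. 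Third, using \Cref{lem:tww1-sequence}, any colored prime graph of twin-width~\(1\) admits a \(1\)-contraction sequence with a single red edge at every step, and the first contraction of such a sequence yields a pair \((u,v)\) for which \(\cs(G^c,u,v)\neq \texttt{f}\); hence \(I(G^c)\) is nonempty whenever \(G^c\) is a colored prime graph of twin-width~\(1\).

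For the main implication, suppose \(I(G^c)=I(H^c)\). Since \(I(G^c)\) is nonempty by the previous paragraph, there exist \(u,v\in V(G)\) and \(u',v'\in V(H)\) such that \(\cs(G^c,u,v)=\cs(H^c,u',v')\neq\texttt{f}\). By \Cref{csguv} this forces \(G\cong H\), which is exactly the desired property of the invariant.

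The main conceptual obstacle has already been handled by \Cref{csguv} and \Cref{lem:tww1-sequence}; the only thing left to be careful about is ensuring that \(I\) is truly symmetric in the sense that isomorphic graphs produce identical multisets, which is why I collect the values over \emph{all} pairs \((u,v)\) rather than fixing one — a fixed choice would require a canonical selection of the starting pair, while the multiset formulation sidesteps this issue entirely.
\end{proof_sketch}
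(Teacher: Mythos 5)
Your proposal is correct and follows essentially the same route as the paper: the paper defines $\CS(G)$ as the set of all non-failing values of $\cs$ and takes its lexicographic minimum as the invariant, appealing to \Cref{csguv} for correctness and noting the $\mathcal{O}(|V|^5)$ runtime, exactly as you do. Your multiset variant and the explicit checks of isomorphism-invariance and nonemptiness are just slightly more detailed versions of the same argument.
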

\begin{proof}
Let $G$ be a colored prime graph of twin-width 1.
Define
\[\CS(G) \coloneqq \{ \cs(G,u,v) \mid u,v, \in V(G), \cs(G,u,v) \neq \texttt{f} \}.\]
Then $\cs(G) \coloneqq \min_{\leq_\text{lex}}(\CS(G))$ is such an
invariant by \cref{csguv} and computable in time $\mathcal{O}(|V|^5)$,
because the canonical contraction sequence is computable in time $\mathcal{O}(|V|^3)$.
\end{proof}

By \enquote{unwinding}
the minimal canonical contraction sequence
for colored prime graphs of twin-width 1,
one can also get a canonical form.

Finally, we can leverage
the canonical contraction sequence
to show that 3-dimensional Weisfeiler-Leman
identifies colored prime graphs of twin-width 1.

\begin{theorem}\label{3wl-coloredprimetww1}
Let $G,H$ be two colored prime graphs of twin-width 1 such that $G \not \cong H$.
Then 3-WL distinguishes $G$ and $H$.
\end{theorem}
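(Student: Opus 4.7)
The plan is to show that Spoiler wins the bijective $4$-pebble game $\bp{4}$ on $G$ and $H$, which by the cited characterization is equivalent to $3$-WL distinguishing them. First I fix a pair $u, v \in V(G)$ with $\cs(G, u, v) \neq \texttt{f}$, which exists because $G$ has twin-width $1$, and set $\sigma \coloneqq \cs(G, u, v)$. Lemma~\ref{csguv} combined with $G \not\cong H$ then guarantees that for every $u', v' \in V(H)$ we have $\cs(H, u', v') \neq \sigma$ (the value is either $\texttt{f}$ or disagrees with $\sigma$ at some symbol of the string).

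Spoiler plays in phases mirroring the canonical contraction sequence of $G$ starting from $(u, v)$. He begins by placing pebbles on $u$ and then $v$, forcing Duplicator's bijections to pick images $u'_0, v'_0 \in V(H)$; if the atomic types of $(u, v)$ and $(u'_0, v'_0)$ fail to match, Spoiler has already won. Thereafter he maintains the invariant that at iteration $i$ two pebbles mark $V(G)$-representatives of the super-vertices $u_{G, i}, v_{G, i}$ of the quotient trigraph $G_i$, paired with images $u'_i, v'_i$ in $H$, leaving two spare pebbles free for probing. For each step~2(a), Spoiler places a third pebble on a representative of the unique cross-vertex $w$ in $G_i$; Duplicator must respond with a $w' \in V(H)$ whose atomic type to $(u'_i, v'_i)$ agrees with that of $w$ to $(u_{G,i}, v_{G,i})$, and the uniqueness of such a cross-vertex (or, if it is not unique in $H$, a cardinality discrepancy detectable via the fourth pebble) pins down the correct image. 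He then picks up the pebble from $v_{G, i}$, whose super-vertex is absorbed into $u_{G, i+1}$, and reuses the pebble on $u_{G, i}$ as a representative for $u_{G, i+1}$, with $w$ becoming the pebble on $v_{G, i+1}$. For each step~2(b), Spoiler reads off the near-twin count vector $(n_i^u, n_i^v)$; with two pebbles placed on $u_{G, i}, v_{G, i}$, the stable $3$-WL coloring partitions the remaining vertices exactly by the parameters $(\atp(u, \cdot), \atp(v, \cdot), \text{color})$ that define near-twin classes, so any discrepancy in the counts lets Spoiler pebble a vertex in a class of larger size in one graph and force Duplicator's bijection to map it into a class of smaller size in the other, at which point Spoiler wins by an atomic-type mismatch.

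If Duplicator survives all phases, the forced sequence of responses exhibits a valid run of the canonical sequence in $H$ from $(u'_0, v'_0)$ whose description coincides with $\sigma$ symbol by symbol, giving $\cs(H, u'_0, v'_0) = \sigma$ and contradicting our setup. The main technical obstacle will be making step~2(b) fully rigorous: one has to verify that after two pebbles are placed, the $3$-WL stable coloring on the remaining vertices coincides with the partition into the near-twin classes indexed by $((t, t'), x, c^*)$, so that Duplicator has no freedom to disguise a cardinality mismatch by a clever bijection. A secondary care is the book-keeping across step~2(a), namely that picking up the pebble on $v_{G, i}$ is safe because its super-vertex merges into $u_{G, i+1}$, so the surviving pebble on (a representative of) $u_{G, i}$ already serves as a representative of the merged super-vertex in $G_{i+1}$, keeping the invariant alive.
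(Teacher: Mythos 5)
Your overall plan --- play the bijective $4$-pebble game, anchor the starting contraction pair $(u,v)$, and force Duplicator to replay the canonical contraction sequence until the strings $\cs(G,u,v)$ and $\cs(H,u',v')$ diverge --- is the same as the paper's. But there is a genuine gap at the heart of your argument, and you have in fact flagged it yourself: you assume that once two pebbles sit on representatives of $u_{G,i}$ and $v_{G,i}$, the near-twin classes are determined, i.e.\ that the stable coloring (or the forced responses of Duplicator) partitions the remaining vertices exactly into the classes $\ntw_{G_i}^{(t,t'),x,c^*}$. This is not a deferred technicality; it is essentially the whole theorem. Membership in a near-twin class is the condition $G_i/w,x = G_i\setminus\{w\}$, i.e.\ $w$ agrees with the \emph{entire super-vertex} $x$ (a set $U_{G,i}$ or $V_{G,i}$ of original vertices) on every vertex of $G_i$ outside $\{u_{G,i},v_{G,i}\}$. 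A single pebbled representative of a module does not force Duplicator's bijection to respect the module setwise, so Duplicator can map a genuine near-twin of $G$ to a vertex of $H$ that has the right color and the right atomic types to the two pebbled representatives but disagrees with $U_{H,i}$ or $V_{H,i}$ on some unpebbled vertex. The same problem already infects your step~2(a): the ``unique cross-vertex'' is the vertex homogeneously joined to all of one super-vertex set and to none of the other, and this cannot be read off from adjacency to two pebbled representatives alone.

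The paper closes this gap with machinery your proposal lacks: it maintains the sets $U_{G,i},V_{G,i},U_{H,i},V_{H,i}$ explicitly as \emph{modules}, with the invariant that any bijection failing to map $U_{G,i}$ to $U_{H,i}$ and $V_{G,i}$ to $V_{H,i}$ setwise is already losing for Duplicator --- proved by a backtracking induction that walks the violation back through the history of the sequence and bottoms out at the two permanently pebbled vertices $u$ and $v$. Your bookkeeping makes this repair harder rather than easier: by lifting the pebble from (the representative of) $v_{G,i}$ when its super-vertex is absorbed, you give up one of the two anchors that the backtracking argument needs, whereas the paper never moves the first two pebbles and instead lets $U_{G,i}$ always contain both $u$ and $v$. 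To make your proof work you would need to either prove your claim about the stable coloring refining the near-twin partition (which would require an induction along the contraction sequence equivalent to the paper's invariant) or reinstate the module-invariant-plus-backtracking argument.
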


\label{tww1:cssequence-pebble}

\begin{proof}
We show that Spoiler wins
the 4-pebble game on $G$ and $H$.
The idea of the strategy is to fix the starting contraction pair
with a pebble each,
then track the resulting sequence with two more pebbles
and eventually spot the difference.

Let $u,v \in V(G)$ be some pair of vertices such that $\text{cs}(G,u,v) \neq \text{f}$.
Spoiler lays a pebble on $u$ and then a pebble on $v$. 
Let $b$ be the bijection Duplicator responds with after both are pebbled,
and set $u' \coloneqq b(u)$ and $v' \coloneqq b(v)$.
Observe that $\text{cs}(G,u,v) \neq \cs(H,u',v')$, because otherwise
the two graphs would be isomorphic by \cref{csguv}.

From here, Spoiler never moves the first two pebbles,
and Duplicator must try to replicate
the contraction sequence described in $\cs(G,u,v)$,
or else Spoiler can use the third and fourth pebble to spot the difference and win.
However, since $\cs(G,u,v) \neq \cs(H,u',v')$,
Spoiler will always be able to spot a difference at some point
in the sequence.
To do this, we define a series of sets $U_{G,i}$ and $V_{G,i}$
and a series of sets $U_{H,i}$ and $V_{H,i}$
such that the following invariant holds in all rounds:
\begin{enumerate}
	\item If Duplicator does not choose a bijection $b$
	such that $b(U_{G,i}) = U_{H,i}$ and $b(V_{G,i}) = V_{H,i}$,
	then Spoiler has a winning strategy.
	\item $U_{G,i}$, $V_{G,i}$, $U_{H,i}$ and $V_{H,i}$ are modules.
	\item The set $U_{G,i}$ always contains a pebbled vertex.
\end{enumerate}
The start of the series is
$U_{G,i} \coloneqq \{u\},V_{G,i} \coloneqq \{v\},
U_{H,i} \coloneqq \{u'\},U_{G,i} \coloneqq \{v'\}$.
Spoiler will never move the first two pebbles until the very
end of the game, so the invariant trivially holds for these sets.
From here, Spoiler plays as follows:
\begin{itemize}
\item For the very first contraction indicated in $\cs(G,u,v)$,
Spoiler automatically wins if $\atp_G(u,v) \neq \atp_H(u',v')$
or the colors differ.
Otherwise, continue.
\item If the next contraction described in $\cs(G,u,v)$ contracts a
red edge or is the very first contraction,
there must be exactly one $w \in V(G)$ that is connected to all of exactly one of
$U_{G,i},V_{G,i}$ and none of the other. There must also be exactly one
$w' \in V(H)$ of the same color that is connected to all of exactly one of
$U_{H,i},V_{H,i}$.
If Duplicator does not map $w$ to $w'$
or $w$ and $w'$ differ in their
color or isomorphism type regarding the respective sets,
Spoiler pebbles~$w$.
Now, if Duplicator maps $U_{G,i}$ to $U_{H,i}$
and $V_{G,i}$ to $V_{H,i}$, Spoiler wins:
Let $b$ be the bijection chosen by Duplicator.
There is always a pebble in $U_{G,i}$ on some vertex $u^* \in U_{G,i}$
and $w$ is homogeneously connected
to $U_{G,i}$ and $V_{G,i}$, and so for any $v^* \in V_{G,i}$,
it holds that $\atp_G(u^*,w,v^*) \neq \atp_H(b(u^*),b(w),b(v^*))$.
If Duplicator does not map $U_{G,i}$ to $U_{H,i}$
and $V_{G,i}$ to $V_{H,i}$, Spoiler \enquote{backtracks}
to those sets and follows the winning strategy which
must exist due to the invariant for those two sets.
If Duplicator forces Spoiler to backtrack continuously until
the start of the contraction sequence, Spoiler
wins due to $u$ and $v$ being pebbled.
Finally, if Duplicator maps $w$ to $w'$
and they agree in their color and isomorphism types
regarding the respective $U$ and~$V$,
continue with $U_{G,i+1} \coloneqq U_{G,i} \cup V_{G,i}$,
$U_{H,i+1} \coloneqq U_{H,i} \cup V_{H,i}$,
$V_{G,i+1} \coloneqq \{w\}$ and $V_{H,i+1} \coloneqq \{w'\}$.
See \Cref{fig:redcontraction}.

	\begin{figure}
			\centering
			\begin{tikzpicture}[minimum size = 1em,
    				baseline=(current bounding box.north),
    				every path/.style={thick}]
    			\node (G) at (-2,3) {$G:$};
   			\node (ugi) at (0,2) {$U_{G,i}$};
			\begin{scope}[every node/.style={circle,draw}]
   			\node (u) at (0,0) {$u$};
   			\node (v) at (0,1) {$v$};
   			\node (w) at (2,-0.2) {$w$};
   			\node (vgi) at (0,-2) {$V_{G,i}$};
   			\node[ellipse,draw=black,inner sep=0,
    				fit = (ugi) (v) (u)] (ugibub) {};
   			\node[label=left:{$U_{G,i+1}$},ellipse,dashed,draw=black,inner sep=0,
    				fit = (ugibub) (vgi)] (ugiplus1) {};
   			\node[label={[label distance=-3mm]below:$V_{G,i+1}$},dashed,draw=black,inner sep=0,
    				fit = (w)] (vgiplus1) {};
			\end{scope}
    			\node (H) at (4,3) {$H:$};
   			\node (uhi) at (7,2) {$U_{H,i}$};
			\begin{scope}[every node/.style={circle,draw}]
   			\node (up) at (7,0) {$u'$};
   			\node (vp) at (7,1) {$v'$};
   			\node (wp) at (5,-0.2) {$w'$};
   			\node (vhi) at (7,-2) {$V_{H,i}$};
   			\node[ellipse,draw=black,inner sep=0,
    				fit = (uhi) (vp) (up)] (uhibub) {};
   			\node[label=right:{$U_{H,i+1}$},ellipse,dashed,draw=black,inner sep=0,
    				fit = (uhibub) (vhi)] (uhiplus1) {};
   			\node[label={[label distance=-3mm]below:$V_{H,i+1}$},dashed,draw=black,inner sep=0,
    				fit = (wp)] (vhiplus1) {};
			\end{scope}
    			\draw[draw=red] (ugibub) -- (vgi);
    			\draw[draw=red] (uhibub) -- (vhi);
			\path[->, draw=blue] (w) edge (wp);
			\path[->, draw=blue] (ugibub) edge (uhibub);
			\path[->, draw=blue] (vgi) edge (vhi);
		\end{tikzpicture}
	    	\caption{Illustration of a red edge contraction in the proof for
	    	\Cref{3wl-coloredprimetww1}.
    		The bijection chosen by Duplicator is indicated by the arrows.}
    		\label{fig:redcontraction}
    	\end{figure}
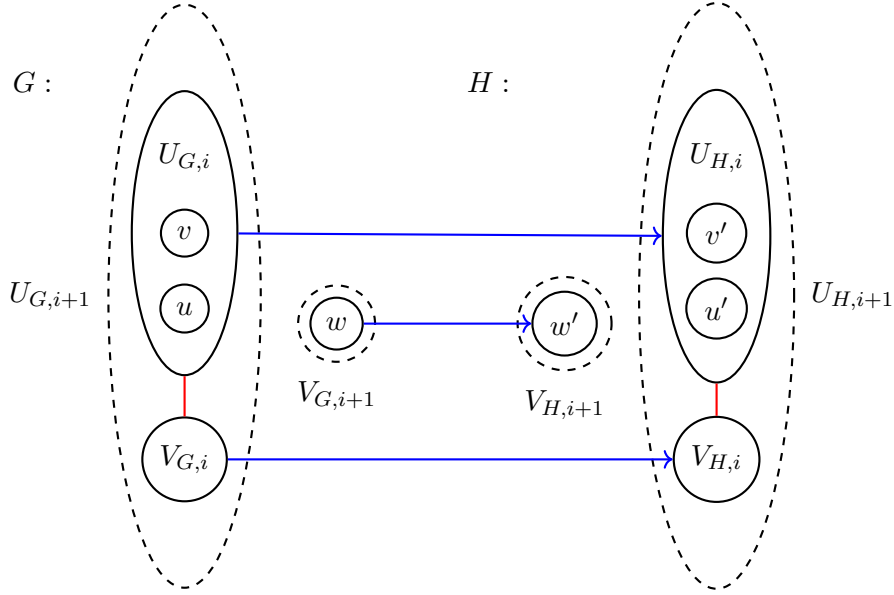
    	
Then the invariant holds:
\begin{enumerate}
\item The first part of the invariant holds as
Duplicator must always map the set $U_{G,i+1}$ to $U_{H,i+1}$
because otherwise she either did not map $U_{G,i}$ to $U_{H,i}$
or she did not map $V_{G,i}$ to $V_{H,i}$, and the invariant holds for those sets.
Duplicator must also always map $w$ to $w'$ as otherwise
Spoiler wins by pebbling $w$ as described above.
\item The second part of the invariant holds
as the sets follow the canonical contraction sequence:
The vertex represented by $U_{G,i}$ and the vertex represented by $V_{G,i}$
get contracted in this step and form $U_{G,i+1}$,
which creates a red edge to $w$, now represented by $V_{G,i+1}$,
but no other vertex. Thus, they agree on all other vertices.
Same for $H$.
\item
Observe that the vertices ${u,v \in U_{G,i} \subseteq U_{G,i+1}}$ are always pebbled.
The third part of the invariant
also holds.

\end{enumerate}
\item For all other contraction phases described in $\cs(G,u,v)$,
let $W$ be the set of vertices chosen for contraction in $G$,
and let $W'$ be the set of vertices chosen for contraction in $H$.
If $|W| \neq |W'|$ or Duplicator does not map $W$ to $W'$,
let~$b$ be the bijection chosen by Duplicator
and without loss of generality let $w \in W$
such that $w' \coloneqq b(w) \not \in W'$.
Spoiler now pebbles $w$.
If Duplicator maps the set $U_{G,i}$ to $U_{H,i}$
and $V_{G,i}$ to $V_{H,i}$, Spoiler wins:
Either $w'$ does not have the right color or
isomorphism type regarding the corresponding
$U$ and~$V$. 
In this case, there is always a pebble in $U_{G,i}$ on some vertex $u^* \in U_{G,i}$
and~$w$ is homogeneously connected
to $U_{G,i}$ and $V_{G,i}$, and so for any $v^* \in V_{G,i}$,
it holds that
$\atp_G(u^*,w,v^*) \neq \atp_H(b(u^*),b(w),b(v^*))$.
Otherwise, $w'$ disagrees on a common neighbor with the respective
$U$ or $V$.
Then Spoiler can win by moving
the third pebble anywhere into $V_{G,i}$,
and then either backtracking if Duplicator triggers the invariant
for $U_{G,i}$ and $V_{G,i}$,
or finally pebbling the neighbor that is disagreed on.
If $|W| = |W'|$
and Duplicator does map
$W$ to~$W'$, continue by adding $W$ to $U_{G,i}$ to get $U_{G,i+1}$
and $W'$ to $U_{H,i}$ to get $U_{H,i+1}$
or by adding $W$ to $V_{G,i}$ to get $V_{G,i+1}$
and $W'$ to $V_{H,i}$ to get $V_{H,i+1}$,
depending on the contraction phase, leaving the other set unchanged
for the next iteration. See \Cref{fig:contractionphase}.

\begin{figure}
			\centering
			\begin{tikzpicture}[minimum size = 1em,
    				baseline=(current bounding box.north),
    				every path/.style={thick}]
    			\node (G) at (-2,3) {$G:$};
   			\node (ugi) at (0,2) {$U_{G,i}$};
			\begin{scope}[every node/.style={circle,draw}]
   			\node (u) at (0,0) {$u$};
   			\node (v) at (0,1) {$v$};
   			\node (w) at (2,-0.2) {$W$};
   			\node (vgi) at (0,-3) {$V_{G,i}$};
   			\node[ellipse,draw=black,inner sep=0,
    				fit = (ugi) (v) (u)] (ugibub) {};
   			\node[label=left:{$U_{G,i+1}$},ellipse,dashed,draw=black,inner sep=0,
    				fit = (ugibub) (w)] (ugiplus1) {};
   			\node[label={[label distance=-3mm]below:$V_{G,i+1}$},dashed,draw=black,inner sep=0,
    				fit = (vgi)] (vgiplus1) {};
			\end{scope}
    			\node (H) at (4,3) {$H:$};
   			\node (uhi) at (7.5,2) {$U_{H,i}$};
			\begin{scope}[every node/.style={circle,draw}]
   			\node (up) at (7.5,0) {$u'$};
   			\node (vp) at (7.5,1) {$v'$};
   			\node (wp) at (5.5,-0.2) {$W'$};
   			\node (vhi) at (7.5,-3) {$V_{H,i}$};
   			\node[ellipse,draw=black,inner sep=0,
    				fit = (uhi) (vp) (up)] (uhibub) {};
   			\node[label=right:{$U_{H,i+1}$},ellipse,dashed,draw=black,inner sep=0,
    				fit = (uhibub) (wp)] (uhiplus1) {};
   			\node[label={[label distance=-3mm]below:$V_{H,i+1}$},dashed,draw=black,inner sep=0,
    				fit = (vhi)] (vhiplus1) {};
			\end{scope}
    			\draw[draw=red] (ugibub) -- (vgi);
    			\draw[draw=red] (uhibub) -- (vhi);
			\path[->, draw=blue] (w) edge (wp);
			\path[->, draw=blue] (ugibub) edge (uhibub);
			\path[->, draw=blue] (vgi) edge (vhi);
		\end{tikzpicture}
	    	\caption{Illustration of contraction phase in the proof for
	    	\Cref{3wl-coloredprimetww1}.}
    		\label{fig:contractionphase}
    	\end{figure}
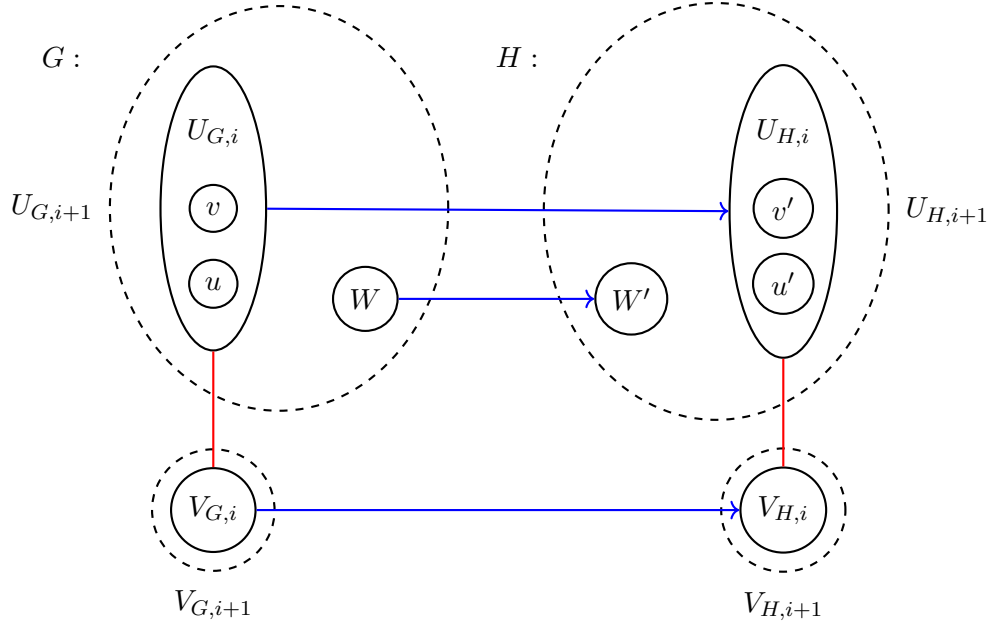
    	
Then the invariant holds:
\begin{enumerate}
\item The first part of the invariant holds,
as Duplicator must always map $W$ to $W'$ as otherwise
Spoiler wins as described above.
\item The second part of the invariant
holds as the sets follow the canonical contraction sequence:
The vertex represented by $U_{G,i}$ and the vertices in $W$
get contracted and are now represented by $U_{G,i+1}$,
and $V_{G,i+1}$ stays the same as $V_{G,i}$.
Same for $H$.
\item The third part holds,
since the vertices $u,v \in U_{G,i} \subseteq U_{G,i+1}$ are always pebbled.

\end{enumerate}
\end{itemize}
This process will eventually result in a victory for Spoiler
as the two canonical contraction sequences must differ at some point.
In essence, Spoiler finds the earliest mistake in the contraction
sequence emulated by Duplicator, and then at worst
backtracks to the start of the sequence to \enquote{trap} Duplicator
and win. Thus, Spoiler wins after at most $|V(G)|$ moves.
\end{proof}

\subsection{From prime graphs to all graphs}

Now, we lift the result to all graphs of twin-width 1.
For this, we use \textit{modular decompositions}.
Recall the definition
of modules that is at the core of prime graphs.
A set of modules is \emph{strong} if the modules in it are all pairwise disjoint.
For a connected and coconnected graph $G$,
let $M_G$ be the inclusion-wise maximal strong set of modules
that are proper subsets of $V(G)$. The elements of $M_G$ will also
be referred to as \emph{maximal modules} of $G$.

The set $M_G$ is a uniquely determined partition of $V(G)$
\cite{strongmodules}.
We will call $M_G$ the modular decomposition of $G$.
Gallai has shown a nice way to continually decompose a graph
into its maximal modules, where they always yield a prime quotient graph:

\begin{lemma}[\cite{mod-decomp-Gallai}]
For every graph $G$, one of the following holds:
\begin{itemize}
\item $G$ is disconnected.
\item $\overline{G}$ is disconnected.
\item $G/M_G$ is prime.
\end{itemize}
\end{lemma}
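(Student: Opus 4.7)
The plan is to handle only the contrapositive of the third case: assume that $G$ is both connected and coconnected, and show that $G/M_G$ is prime. In the other two cases there is nothing to prove. The argument for the remaining case hinges on a single closure property of modules, together with the fact, recorded just before the lemma, that $M_G$ is a partition of $V(G)$ whose elements are maximal among proper modules of $G$.

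First, I would record the closure property: if $\mathcal{M}' \subseteq M_G$ is a set of maximal modules that forms a module of the quotient graph $G/M_G$, then $M^\star := \bigcup \mathcal{M}' \subseteq V(G)$ is itself a module of $G$. To see this, take any vertex $v \notin M^\star$. Since $M_G$ partitions $V(G)$, $v$ lies in some part $M_j \in M_G \setminus \mathcal{M}'$. Because $\mathcal{M}'$ is a module of $G/M_G$, the single vertex $M_j$ of $G/M_G$ is homogeneously connected to every $M_i \in \mathcal{M}'$ in the same way (either fully joined to all of them, or disjoint from all of them). Since each $M_i$ is itself a module of $G$, the vertex $v$ inherits this homogeneity: $v$ is uniformly connected to all of $M^\star$, as required.

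Next, I would derive a contradiction from the existence of a non-trivial module in $G/M_G$. The trivial modules of $G/M_G$ are those of size $1$ and of size $|M_G|$, so a non-trivial module is a set $\mathcal{M}' \subseteq M_G$ with $2 \leq |\mathcal{M}'| < |M_G|$. By the closure property, $M^\star = \bigcup \mathcal{M}'$ is a module of $G$. Since $|\mathcal{M}'| < |M_G|$ and $M_G$ is a partition, $M^\star$ is a proper subset of $V(G)$, and since $|\mathcal{M}'| \geq 2$, $M^\star$ strictly contains some $M_i \in \mathcal{M}'$. But then $M_i$ is not maximal among the proper modules of $G$, contradicting $M_i \in M_G$.

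I expect the main subtlety to be pinning down precisely what maximality of the elements of $M_G$ gives us; the cited partition result is what makes this immediate, so the argument reduces to the one closure lemma above. All other difficulties (e.g. correctly identifying which modules of $G/M_G$ are trivial, verifying that $M_j$ in the closure step actually exists because $M_G$ is a partition) are bookkeeping that fall out once the assumption of connectedness and coconnectedness is in place.
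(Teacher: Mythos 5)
Your argument is correct. Note that the paper itself gives no proof of this lemma -- it is quoted as Gallai's theorem with a citation, relying only on the preceding fact that for a connected and coconnected graph $G$ the set $M_G$ is a partition of $V(G)$ into the inclusion-maximal proper modules. Your proof is the standard one: the closure property (a module $\mathcal{M}'$ of $G/M_G$ lifts to the module $\bigcup\mathcal{M}'$ of $G$, because adjacency between parts in the quotient coincides with full connection in $G$) combined with maximality of the parts immediately rules out non-trivial modules of the quotient. The only looseness is the phrase that $v$ ``inherits'' the homogeneity from $M_i$ being a module; the cleaner justification is that since all parts of $M_G$ are modules, any two distinct parts are homogeneously connected in $G$, so an edge (resp.\ non-edge) $M_jM_i$ in $G/M_G$ means $v\in M_j$ is adjacent to all (resp.\ none) of $M_i$. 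With that reading, the argument is complete.
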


Note that because the sets of $M_G$ are modules of $G$,
$G/M_G$ will have no red edges.
Inspired by Gallai's Theorem,
there is a unique tree representing all modules of a graph.

\begin{definition}[Modular decomposition tree, \cite{modtrees}]
For a graph $G$,
\emph{the modular decomposition tree} $\ModTree_G = (\mathcal{M},E,\ell)$
consists of a set of modules $\mathcal{M} \subseteq 2^V$ of $G$,
a relation $E \subseteq \mathcal{M}^2$ such that $(\mathcal{M},E)$ is a directed tree,
and a label function $\ell: \mathcal{M} \rightarrow
\{\texttt{prime}, \texttt{series}, \texttt{parallel}, \texttt{single}\}$
labeling each module.
It is defined recursively as follows:
\begin{itemize}
\item The modular decomposition tree of a graph with one vertex $v$
is the module $\{v\}$ labeled \texttt{single}.
\item If $G$ is connected and coconnected, then label the root
$V(G)$ \texttt{prime},
and for each $M \in M_G$,
attach $\ModTree_{G[M]}$ to it as a subtree.
\item If $G$ is disconnected, then label the root
$V(G)$ \texttt{parallel},
and for each connected component $C$ of $G$,
attach $\ModTree_{G[C]}$ to it as a subtree.
\item If $\overline{G}$ is disconnected, then label the vertex
$V(G)$ \texttt{series},
and for each connected component $C$ of $\overline{G}$,
attach $\ModTree_{G[C]}$ to it as a subtree.
\end{itemize}
\end{definition}

Using this, we can lift our canonization
results from prime graphs to all graphs:
By lexicographically ordering the subgraphs in each step,
we can recursively define a canonization
for the whole graph by using the canonization
of the relevant subgraphs.
In order to do this,
we can use the canonization of \textit{colored} prime graphs,
coloring each vertex in the quotient prime graph
with the isomorphism type of the module it represents.

\begin{lemma}\label{tww1-canonicalform}
There is a polynomial-time computable canonical form
for colored graphs of twin-width 1.
\end{lemma}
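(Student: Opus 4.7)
The plan is to recurse along the modular decomposition tree $\ModTree_G$ and combine the canonical forms of the children at each node, using the canonical form for colored prime graphs of twin-width~$1$ (obtained from \Cref{tww1-prime-invariant}) at prime nodes. Since $\ModTree_G$ is uniquely determined by $G$ and can be computed in polynomial time, and since at each node we only need a canonical string depending on the children's canonical strings, the overall algorithm will be polynomial-time.

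More concretely, I would define the canonical form $\mathrm{can}(G^c)$ of a colored graph $G^c$ of twin-width~$1$ recursively on the structure of $\ModTree_G$:
\begin{itemize}
\item If the root is labeled \texttt{single} with vertex $v$, return a string encoding just the color $c(v)$.
\item If the root is labeled \texttt{parallel}, recursively compute $\mathrm{can}(G^c[C])$ for every connected component $C$, sort these strings lexicographically, and return their concatenation prefixed by a marker for \texttt{parallel}.
\item If the root is labeled \texttt{series}, proceed analogously with the connected components of $\overline{G}$ and a \texttt{series} marker.
\item If the root is labeled \texttt{prime}, recursively compute $\mathrm{can}(G^c[M])$ for every $M\in M_G$, then form the quotient graph $G/M_G$ and color each vertex $M\in M_G$ by the string $\mathrm{can}(G^c[M])$. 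Now apply the canonical form for colored prime graphs of twin-width~$1$ (whose existence is asserted in the paragraph following \Cref{tww1-prime-invariant}, obtained by unwinding the canonical contraction sequence) to this colored prime quotient and return the result, prefixed by a \texttt{prime} marker.
\end{itemize}

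To see that this is well-defined and yields a canonical form, I would use two ingredients. First, every module $M$ of $G$ satisfies $\tww(G[M])\le \tww(G)=1$ by restricting a $1$-contraction sequence of $G$ to $M$, and the quotient $G/M_G$ satisfies $\tww(G/M_G)\le 1$ by first contracting each $M\in M_G$ into a single vertex inside a $1$-contraction sequence (which introduces no red edges since modules have identical outside neighborhoods). Hence all objects to which we apply the recursive step or the prime-graph canonization are indeed of twin-width at most~$1$. Second, since the labeled tree $\ModTree_G$ is an isomorphism invariant and is recovered (up to relabeling of leaves) by an isomorphism, and since sorting the children's strings at \texttt{parallel} and \texttt{series} nodes removes the ambiguity of child order, two graphs produce the same string iff they are isomorphic — the induction base and step at \texttt{prime} nodes transfer to an actual isomorphism via the canonical form for colored prime graphs of twin-width~$1$.

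The main obstacle is the prime case, where one must ensure that the colored quotient $G/M_G$ really falls within the scope of \Cref{tww1-prime-invariant}: namely, that it is prime (given by Gallai's theorem), colored (legitimate, as the recursive calls produce strings in a well-ordered set), and of twin-width at most $1$ (the argument above). Once these points are in place, polynomial-time computability follows because $\ModTree_G$ has $O(|V(G)|)$ nodes, the total work across all recursive calls at non-prime nodes is polynomial, and each prime-node call invokes the polynomial-time canonization from \Cref{tww1-prime-invariant}.
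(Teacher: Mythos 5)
Your proposal follows essentially the same route as the paper, which itself only sketches this lemma: recurse along the modular decomposition tree, sort the children's canonical forms lexicographically at \texttt{parallel} and \texttt{series} nodes, and at \texttt{prime} nodes apply the canonical form for colored prime graphs of twin-width~1 to the quotient colored by the children's canonical forms. One small repair: contracting a module down to a single vertex \emph{can} create red edges inside the module (its vertices need not be pairwise twins), so to justify \(\tww(G/M_G)\le 1\) it is cleaner to note that \(G/M_G\) is isomorphic to the induced subgraph of \(G\) on one representative per module, and twin-width is monotone under induced subgraphs.
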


Next, we turn to distinguishing
graphs of twin-width 1 with Weisfeiler-Leman.
First, we show that 2-WL distinguishes maximal modules in a graph.

\begin{lemma}\label{2wl-modules}
Let $G,H$ be connected and coconnected graphs.
For all $u,v \in V(G)$ and $u',v'\in V(H)$,
if $u$ and $v$ are in the same maximal module of $G$
and $u'$ and $v'$ are in different maximal modules of $H$,
then $uv,u'v'$ is a winning position for Spoiler
in $\bp{3}(G,H)$.
\end{lemma}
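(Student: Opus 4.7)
The plan is to describe Spoiler's winning strategy in the bijective 3-pebble game starting from the position $((u,v),(u',v'))$. The key structural input is that $\{u',v'\}$ cannot be a module of $H$: if it were, then, since $\{u',v'\}$ overlaps the maximal module $M_1 \ni u'$ at $u'$, the union $M_1\cup\{v'\}$ would itself be a module, contradicting maximality of $M_1$ unless $M_1\cup\{v'\} = V(H)$; a symmetric argument then forces $V(H)=\{u',v'\}$, which is impossible since $|V(G)|=|V(H)|\geq 3$ (this lower bound follows from the existence of a proper maximal module of $G$ containing both $u$ and $v$). Using the primeness of the quotient $H/M_H$, which must have at least three vertices because $H$ being both connected and coconnected rules out a 2-vertex quotient, I would further pick a maximal module $M_3\notin\{M_1,M_2\}$ adjacent in the quotient to exactly one of $M_1,M_2$ and choose a witness $w^*\in M_3$; without loss of generality $w^*\sim u'$ and $w^*\not\sim v'$.

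Write $M$ for the maximal module of $G$ containing $u,v$; the module property implies that every vertex of $V(G)\setminus M$ is homogeneously connected to $\{u,v\}$. After Duplicator commits to a bijection $b$, there are two immediate ways Spoiler can win. First, if some $w\in V(G)\setminus M$ has $b(w)\in W_H^1\cup W_H^2$—where $W_H^i$ denotes the set of vertices of $H$ adjacent to exactly $u'$ or exactly $v'$, respectively—then Spoiler plays $w$ and the 3-vertex atomic types disagree. Second, defining $W_G^i$ analogously (and noting $W_G^i\subseteq M\setminus\{u,v\}$ by the module property), if $|W_G^i|\neq |W_H^i|$ for some $i$, a pigeonhole argument again yields a winning vertex. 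In the only remaining case, $b$ restricts to a label-preserving bijection of these sets, and Spoiler plays $w \coloneqq b^{-1}(w^*)\in W_G^1\subseteq M\setminus\{u,v\}$.

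The third pebble is now on $(w,w^*)$, giving position $((u,v,w),(u',v',w^*))$. Spoiler removes the pebble on $u$, reaching $((v,w),(v',w^*))$. Crucially, $v$ and $w$ both lie in $M$, while $v'\in M_2$ and $w^*\in M_3$ lie in different maximal modules of $H$, so the hypothesis of the lemma is re-established for this new pebbled pair, and Spoiler iterates the same strategy.

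The main obstacle I foresee is arguing that this iterative scheme terminates, since the underlying graphs do not shrink across iterations. I plan to resolve this via the equivalence between $\bp{3}$ and the stabilization of 2-dimensional Weisfeiler-Leman refinement given in the preliminaries: the argument above shows, by induction on the WL refinement round, that any pair of $G$ whose components lie in the same maximal module receives a stable 2-WL color different from any pair of $H$ whose components lie in different maximal modules. Since 2-WL stabilizes after polynomially many rounds, Spoiler's strategy is guaranteed to win within a bounded number of game moves.
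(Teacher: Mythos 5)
Your single-round analysis is sound: the choice of $w^*$ in a third maximal module distinguishing $u'$ from $v'$ (which exists by primeness of $H/M_H$), the case split on Duplicator's bijection, and the observation that the surviving case re-establishes the hypothesis at the position $((v,w),(v',w^*))$ are all correct. The genuine gap is exactly the one you flag yourself: termination. Your descent has no decreasing measure --- the $G$-side pair stays inside the same maximal module $M$ forever, and the $H$-side pair merely moves to another pair of distinct maximal modules --- and the proposed fix via 2-WL stabilization does not close it. Translated into colorings, your argument shows: if $\chi^{i+1}_{G,2}(u,v)=\chi^{i+1}_{H,2}(u',v')$ for a ``bad'' pair of pairs, then there is another bad pair of pairs whose colors agree at round $i$. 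An induction on the round number would need as base case that bad pairs are already separated by $\chi^0$, i.e.\ by atomic types, which is false (a same-module edge and a cross-module edge have the same atomic type). So your descent is consistent with the set of bad, stably-equal pairs being nonempty and closed under your operation; it does not prove that set empty. Concretely, since Duplicator wins every infinite play by definition, a strategy that only ever moves from one hypothesis-satisfying position to another may never actually win, even though (by the truth of the lemma) every position it visits is a Spoiler win.

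The paper supplies the missing well-founded structure by computing the modular decomposition tree bottom-up and inducting on the \emph{level}: the base case is level $0$, where modules are singletons, ``same module'' means equality, and atomic types already separate; each inductive step then uses a different structural fact depending on whether the level arises from a twin-merging (series/parallel) step --- where $u,v$ agree on all neighbors outside $M_u\cup M_v$ while $u',v'$ disagree on some external vertex $w$, and Spoiler pebbles $b^{-1}(w)$ or invokes the induction hypothesis on the lower-level modules --- or from a prime step, where Spoiler traces a path in $G$ or $\overline{G}$ with the third pebble. Note that this last maneuver (path tracing) has no counterpart in your strategy, which is further evidence that your particular iteration is not by itself a winning strategy. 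To salvage your approach you would need an analogous decreasing quantity; re-entering the hypothesis with fresh vertices is not enough.
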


\begin{proof}
One can calculate the modular decomposition
tree of a graph $G$ bottom-up, assigning levels for each module:
Start by tracing back all possible series and parallel decompositions,
then a prime step, then repeat until the root of the tree is formed.
Given this labeling of the tree,
one can show inductively that on all levels starting from the leaves,
the Lemma is true.

For a graph $G$,
we define $\text{twins}(G)$ to be the partition of $V(G)$
into equivalence classes of the \enquote{is-twin} relation on $G$,
that is for all $u,v \in V(G)$,
they are in the same set of $\text{twins}(G)$
if and only if they are twins.
The modular decomposition tree of a graph $G$ can be calculated bottom-up as follows:
Set $M_0 \coloneqq \{ \{v\} \mid v \in V(G) \}$. Repeat while $M_i \neq \{V(G)\}$:
\begin{enumerate}
\item Let $T \coloneqq \text{twins}(G/M_i)$.
If $T$ is the trivial partition into singletons, skip to step b.
Otherwise,
set $M_{i+1} \coloneqq \{ \bigcup_{M \in t} M \mid t \in T \}$.
In this way, the vertices in each twin-class of $G/M_i$ are 'merged' together.
Note that each twin-class $t$ either forms an independent set or a clique
in $G/M_i$.
In the case that $t$ forms an independent set, this 'merge' corresponds
to a parallel decomposition,
otherwise a series decomposition.
Repeat this step with $M_{i+1}$.
\item Let $C$ be the partition of $V(G / M_i)$ into connected components,
and $C'$ its further refinement into coconnected components of the connected
components.
Set $M_{i+1} \coloneqq \{ M_{(G/M_i)[c]} \mid c \in C' \}$,
that is, the maximal modules of each component in $C'$.
This corresponds to a prime decomposition:
The vertices of each prime part of $G/M_i$ get 'merged'.
Loop back to the first step and continue with $M_{i+1}$.
\end{enumerate}
Following this procedure,
a modular decomposition tree of $G$ is defined with assigned \enquote{levels}
to each module, and in fact since $\text{ModTree}_G$ is unique,
exactly that composition is defined.
We say that for a graph $G$ and an $i \in \N$, a module $M \in M_i$
is a level $i$ module of $G$.
Now, we show by induction on $i$ that for all $u,v \in V(G)$
and $u',v' \in V(H)$, if $u$ and $v$ are in the same level $i$ module of $G$,
and $u'$ and $v'$ are not in the same level $i$ module of $H$,
then Spoiler wins $\bp{3}(G,H)$ from position $uv,u'v'$.
The maximal modules are those at the top level,
so the lemma follows.
\begin{itemize}
\item For level 0, the statement holds trivially.
\item Let $u,v$ be two vertices in the same level $i+1$ module of $G$.
Let $u',v' \in V(H)$ such that $u'$ and $v'$ are in different level $i$ modules
of~$H$.
If $u$ and $v$ are also in the same level $i$ module of $G$,
the statement holds by induction
(since $u'$ and $v'$ must already be in different level $i$ modules of $H$).
If $U$ and $v$ are in different level $i$ modules of $G$,
let the level $i$ module of $G$ containing $u$ be
$M_u$ and the one containing $v$ be $M_v$.
 Let $M_{u'}$ be the level $i$ module containing $u'$
and $M_{v'}$ be the one containing~$v'$.
We make a case distinction.
\begin{enumerate}
\item Assume $\text{twins}(G / M_i)$ is not the trivial partition
of $G / M_i$ into singletons.
Then $M_u$ and $M_v$ are twins in $G/M_i$,
which means $u$ and $v$ agree on all neighbors in $G$ outside of $M_u \cup M_v$.
Note that $M_{u'}$ and $M_{v'}$ are not twins in the corresponding quotient graph,
and so $u'$ and $v'$ disagree on a neighbor outside of $M_{u'} \cup M_{v'}$.
Let $w \in V(H)$ be that vertex.
By induction, if $M_u$ is not mapped to $M_{u'}$
or $M_v$ is not mapped to $M_{v'}$,
then Spoiler has a winning strategy by pebbling a vertex that gets mapped outside
of its corresponding module.
If $M_u$ is mapped to $M_{u'}$
and $M_v$ is mapped to $M_{v'}$ Spoiler can now win in the position $uv,u'v'$
by pebbling the preimage of $w$ because $u$ and $v$
agree on all their neighbors outside their modules, and $u'$ and $v'$
disagree on $w$. See \Cref{fig:modulescase1}.

\begin{figure}
			\centering
			\begin{tikzpicture}[minimum size = 1em,
    				baseline=(current bounding box.north),
    				every path/.style={thick}]
    			\node (G) at (-2,3) {$G:$};
   			\node (Mu) at (0,3) {$M_u$};
   			\node (Mv) at (0,-1) {$M_v$};
			\begin{scope}[every node/.style={circle,draw}]
   			\node (v) at (0,0) {$v$};
   			\node (u) at (0,2) {$u$};
   			\node (w) at (2,1) {};
   			\node[ellipse,draw=black,inner sep=0,
    				fit = (Mu) (u)] (Mubub) {};
   			\node[ellipse,draw=black,inner sep=0,
    				fit = (Mv) (v)] (Mvbub) {};
			\end{scope}
    			\node (H) at (5,3) {$H:$};
   			\node (Mup) at (7,3) {$M_{u'}$};
   			\node (Mvp) at (7,-1) {$M_{v'}$};
			\begin{scope}[every node/.style={circle,draw}]
   			\node (up) at (7,2) {$u'$};
   			\node (vp) at (7,0) {$v'$};
   			\node (wp) at (5,1) {$w$};
   			\node[ellipse,draw=black,inner sep=0,
    				fit = (Mup) (up)] (Mupbub) {};
   			\node[ellipse,draw=black,inner sep=0,
    				fit = (Mvp) (vp)] (Mvpbub) {};
			\end{scope}
			\draw[dashed] (wp) -- (up);
			\draw[dashed] (wp) -- (vp);
			\draw (w) -- (u);
			\draw (w) -- (v);
			\path[->, draw=red] (w) edge (wp);
			\path[->, draw=blue] (Mubub) edge (Mupbub);
			\path[->, draw=blue] (Mvbub) edge (Mvpbub);
		\end{tikzpicture}
	    	\caption{Illustration of Case 1 of the proof for
	    	\Cref{2wl-modules}.
    		The bijection chosen by Duplicator is indicated by the arrows.}
    		\label{fig:modulescase1}
    	\end{figure}
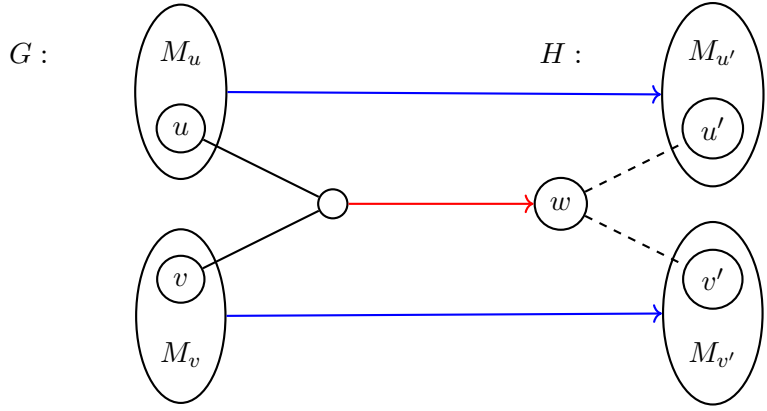

\item Otherwise, $\text{twins}(G / M_i)$ is the trivial partition
of $G / M_i$ into singletons.
Let $M^H_{i}$ be the partition of $H$
into modules level $i$.
If $\text{twins}(H / M^H_i)$ is not the trivial partition
of $H / M^H_i$ into singletons, then return to the argument in Case 1
with $G$ and $H$ exchanged.
So assume it is.
Then,~$M_u$ and $M_v$ are part of the same connected and coconnected
component of $G / M_i$.
Also, $M_{u'}$ and $M_{v'}$ are not part of the same
connected and coconnected component
of $H / M^H_i$.
Because the sets of $M_i$ are modules, paths between modules in $G/M_i$ and $H/M^H_i$
yield paths between their vertices in $G$ and $H$.
Therefore, either there is a path from $u$ to $v$
in~$G$ but not from $u'$ to $v'$ in $H$,
or there is a path from $u$ to $v$
in $\overline{G}$ but not from $u'$ to $v'$ in $\overline{H}$.
In both cases, Spoiler can win by using a third pebble to trace the path
in $G$ or $\overline{G}$ and win. See \Cref{fig:modulescase2}. \qedhere

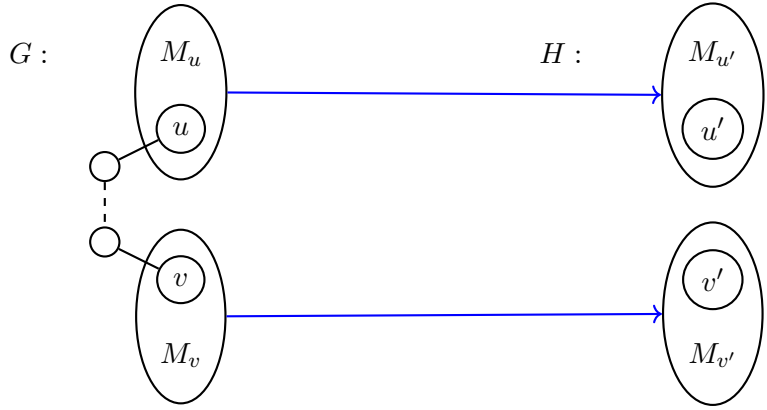
\begin{figure}
			\centering
			\begin{tikzpicture}[minimum size = 1em,
    				baseline=(current bounding box.north),
    				every path/.style={thick}]
    			\node (G) at (-2,3) {$G:$};
   			\node (Mu) at (0,3) {$M_u$};
   			\node (Mv) at (0,-1) {$M_v$};
			\begin{scope}[every node/.style={circle,draw}]
   			\node (v) at (0,0) {$v$};
   			\node (u) at (0,2) {$u$};
   			\node (p1) at (-1,1.5) {};
   			\node (p2) at (-1,0.5) {};
   			\node[ellipse,draw=black,inner sep=0,
    				fit = (Mu) (u)] (Mubub) {};
   			\node[ellipse,draw=black,inner sep=0,
    				fit = (Mv) (v)] (Mvbub) {};
			\end{scope}
    			\node (H) at (5,3) {$H:$};
   			\node (Mup) at (7,3) {$M_{u'}$};
   			\node (Mvp) at (7,-1) {$M_{v'}$};
			\begin{scope}[every node/.style={circle,draw}]
   			\node (up) at (7,2) {$u'$};
   			\node (vp) at (7,0) {$v'$};
   			\node[ellipse,draw=black,inner sep=0,
    				fit = (Mup) (up)] (Mupbub) {};
   			\node[ellipse,draw=black,inner sep=0,
    				fit = (Mvp) (vp)] (Mvpbub) {};
			\end{scope}
			\draw (u) -- (p1);
			\draw (v) -- (p2);
			\draw[dashed] (p1) -- (p2);
			\path[->, draw=blue] (Mubub) edge (Mupbub);
			\path[->, draw=blue] (Mvbub) edge (Mvpbub);
		\end{tikzpicture}
	    	\caption{Illustration of Case 2 of the proof for
	    	\Cref{2wl-modules}.
    		The bijection chosen by Duplicator is indicated by the arrows.}
    		\label{fig:modulescase2}
    	\end{figure}

\end{enumerate}
\end{itemize}
\end{proof}

This Lemma now allows us to establish
the key result of this section,
which concerns colored quotient prime graphs
where non-isomorphic modules get a different color.

\begin{definition}
Let $<^*$ be some arbitrary but fixed order
on the isomorphism types of colored graphs.
Given a colored graph $G_c = (G,c)$,
let $I \coloneqq \{\itp(G_c[M]) \mid M \in M_G\}$
be the set of isomorphism types of the colored maximal modules in $G$.
Let $(i_1,\dots,i_k)$ be an enumeration of $I$
such that $i_1 <^* \dots <^* i_k$.
We define the graph $G_c^* \coloneqq (G/M_G,c^*)$
where $c^*(M) = k$ such that $\itp(G_c[M]) = i_k$.
\end{definition}

Since the order of the colors does not matter
when describing a strategy for Spoiler,
we can now show this general statement.

\begin{lemma}\label{2wl-modules2}
Let $G_c = (G,c)$ be a colored graph such that
for its uncolored version $G$,
the quotient graph $G/M_G$ is prime, and let $k \coloneqq |M_G|$
and $M_G = \{M_1,\ldots,M_k\}$.
Then
\[\dimwl(G_c) \leq \max \, \{ \dimwl(G_c^*), \dimwl(G_c[M_1]),\ldots,\dimwl(G_c[M_k]), 2 \}.\]
\end{lemma}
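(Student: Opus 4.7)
The plan is to argue through the bijective $(d+1)$-pebble game (equivalently, the counting logic $\cequiv{d+1}$), where $d$ denotes the maximum on the right-hand side. Let $H_c$ be a colored graph that is not distinguished from $G_c$ by $d$-WL; I want to conclude that $G_c \cong H_c$. Because $d+1 \geq 3$, \Cref{2wl-modules} (applied in both directions $G\to H$ and $H\to G$) forces every bijection $\beta : V(G) \to V(H)$ that Duplicator can use in a surviving strategy to send each maximal module $M \in M_G$ into, and by symmetry onto, a single maximal module of $H$. This yields a well-defined size-preserving bijection $\phi : M_G \to M_H$ with $\beta(M) = \phi(M)$; in particular $|M_G| = |M_H| = k$, and if this fails then already $2$-WL distinguishes the two graphs and we are done.

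Given such a module-respecting $\beta$, I would establish two transfer facts. First, the colored quotients $G_c^*$ and $H_c^*$ are not distinguished by $d$-WL: a Duplicator strategy on $G_c, H_c$ projects to the quotients by sending a pebble on $v$ to a pebble on its module, and an atomic type of a tuple of modules in $G_c^*$ is determined only by the quotient-vertex colors (which are module isomorphism types) and by ``fully connected vs.\ disconnected'' adjacencies between pebbled modules---both of which are read directly off the atomic types of representative tuples in $G_c$. Since $d \geq \dimwl(G_c^*)$, this yields $G_c^* \cong H_c^*$. Second, for every $M \in M_G$ the induced colored subgraphs $G_c[M]$ and $H_c[\phi(M)]$ are not distinguished by $d$-WL either: restrict a Duplicator strategy on $G_c, H_c$ to pebble positions inside $M$, whose Duplicator responses automatically lie inside $\phi(M)$ by module preservation, and observe that the atomic type of a tuple from $M$ in $G_c$ coincides with its atomic type in $G_c[M]$. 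Thus $G_c[M] \cong H_c[\phi(M)]$, since $d \geq \dimwl(G_c[M])$.

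Combining these pieces now yields $G_c \cong H_c$: choose isomorphisms $\psi_M : G_c[M] \to H_c[\phi(M)]$ and paste them into a global bijection $\psi$; edges inside modules are preserved by the individual $\psi_M$, and edges between two distinct modules $M, M'$ are preserved because modules are homogeneously connected in both $G$ and $H$ and $\phi$ is a color-preserving isomorphism of the quotients. This contradicts $G_c \not\cong H_c$ and finishes the argument. The main technical obstacle I anticipate is making the two ``restriction'' steps rigorous without exceeding the pebble budget---that is, showing that restricting Duplicator's bijection to a module-respecting bijection on $M_G \to M_H$ or to a bijection $M \to \phi(M)$ truly gives a legal Duplicator strategy for the corresponding sub-game. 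The pebble-game formulation seems cleaner than the logic-based one here, since \Cref{2wl-modules} is itself stated in pebble-game terms and both projections consume no additional pebbles beyond the three needed to apply \Cref{2wl-modules}.
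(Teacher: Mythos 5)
Your proposal is correct and follows essentially the same route as the paper: both rest on \Cref{2wl-modules} to force Duplicator's bijections to respect maximal modules, and then reduce the game to the quotient $G_c^*$ and to the individual modules $G_c[M_i]$. The paper phrases this dually as an explicit Spoiler strategy (descend into a module whenever Duplicator maps a vertex into a non-isomorphic module, otherwise simulate the quotient game), which sidesteps the strategy-restriction technicalities you flag at the end, but the underlying argument is the same.
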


\begin{proof}
Let $G_c,H_c$ be non-isomorphic colored graphs
and $G,H$ be their underlying uncolored graphs.
Given a vertex $v$ in $G$ or $H$, we denote by $M_v$ the maximal module it belongs to
in the corresponding graph.
Now, play $\bpk(G_c,H_c)$.
\begin{itemize}
	\item If at any point in the game Duplicator chooses a bijection $b$
	such that for some vertex $v$, it holds that $G_c[M_v] \not \cong H_c[M_{b(v)}]$,
	respond by placing a pebble on $v$.
	For the rest of the game, as long as some vertex in $M_v$ is pebbled,
	Duplicator must choose bijections $b'$ such that
	$b'(M_v) = M_{b(v)}$,
	according to \Cref{2wl-modules}.
	So since $G_c[M_v] \not \cong H_c[M_{b(v)}]$
	and Duplicator must always choose bijections $b'$ such that
	$b'(M_v) = b'(M_{b(v)})$,
	Spoiler can now move to the strategy on that module and win.
	\item
	If Duplicator chooses a bijection $b$
	such that for for all vertices~$v$,
	it holds that $G_c[M_v] \cong H_c[M_{b(v)}]$,
	note that $G_c^* \not \cong H_c^*$ because $G_c \not \cong H_c$.
	Therefore, Spoiler can play a winning game on $G_c^*$ and $H_c^*$.
	We can translate this into a winning strategy for Spoiler on $G_c$ and $H_c$
	using the same number of pebbles.
	Pebbles in a module $M$ of $G_c$ or $H_c$
	correspond to a pebble placed on $M$ in $G_c^*$ or $H_c^*$.
	If Duplicator chooses a bijection $b$ on $G_c$ and $H_c$
	we can translate it into a bijection $b^*$ on $G_c^*$ and $H_c^*$
	as follows: Let
	\[R \coloneqq \{(M_v,M_w) \mid v \in V(G_c), w \in V(H_c),
	b(v) = w \}.\]
	Then we can pick a bijection $b^*$ such that if $b^*(M) = M'$,
	then ${(M,M') \in R}$.
	Picking such a bijection is possible because
	each vertex in a maximal module~$M$ is mapped to a vertex
	in a maximal module $M'$ of equal size.
	Spoiler will have a response to this bijection
	by placing pebbles on modules in $G_c^*$ and $H_c^*$,
	which correspond to pebbles placed anywhere inside the modules in $G_c$ and $H_c$.
	Playing the game on $G_c^*$ and $H_c^*$ in this way,
	$b^*$ respects the existing pebbles and colors,
	and if Spoiler eventually reaches a winning position on $G_c^*$ and $H_c^*$,
	it is also a winning position on $G_c,H_c$,
	because two modules are adjacent in $G_c^*$ or $H_c^*$
	if and only if all of the vertices in the modules are adjacent in $G_c$ or $H_c$. \qedhere
\end{itemize}
\end{proof}

We are now ready to show the central result of this section.

\begin{lemma}
3{\nobreakdash-}WL identifies
colored graphs of twin{\nobreakdash-}width~1.
\end{lemma}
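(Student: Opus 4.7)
The plan is to proceed by induction on $n = |V(G)|$, with the base case $n = 1$ trivial since the single vertex color identifies $G_c$. For $n \geq 2$, Gallai's theorem splits the inductive step into three cases according to the modular decomposition of the underlying graph $G$.

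If $G/M_G$ is prime, I apply \Cref{2wl-modules2} to obtain
\[
  \dimwl(G_c) \leq \max\bigl\{\dimwl(G_c^*),\, \dimwl(G_c[M_1]),\, \ldots,\, \dimwl(G_c[M_k]),\, 2\bigr\}.
\]
The underlying graph of $G_c^*$ is the prime graph $G/M_G$. Since each maximal module $M_i$ is homogeneously connected to everything outside, one can prepend to any width-$1$ contraction sequence of $G$ a batch of module contractions (each of which creates no red edges), producing a width-$1$ contraction sequence of $G/M_G$; thus $\tww(G/M_G) \leq 1$. Then \Cref{3wl-coloredprimetww1} gives $\dimwl(G_c^*) \leq 3$. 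Each $G_c[M_i]$ is an induced subgraph with $\tww \leq 1$, and since primeness forces $|M_G| \geq 2$, also $|M_i| < n$; by the induction hypothesis, $\dimwl(G_c[M_i]) \leq 3$. Altogether, $\dimwl(G_c) \leq 3$.

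If $G$ is disconnected with components $G_1, \ldots, G_m$, then each $G_c[V(G_j)]$ has $\tww \leq 1$ and fewer than $n$ vertices, so by induction is identified by $3$-WL. The key observation is that the stable $2$-WL coloring on pairs encodes distances and, in particular, detects whether two vertices lie in the same connected component; projecting to diagonal pairs $(v,v)$ yields a $2$-WL-invariant vertex coloring labelling each vertex with the canonical form of its component (with respect to $c$). Hence the multiset of $3$-WL vertex colors on $G_c$ records the multiset of component isomorphism types, and any non-isomorphic $H$ must differ from $G_c$ in some such color class. The case where $\overline{G}$ is disconnected reduces to this one, since twin-width, WL-dimension, and the Gallai structure are all preserved under complementation.

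I expect the main technical obstacle to be in the prime case: the bound $\tww(G/M_G) \leq \tww(G)$ required before invoking \Cref{3wl-coloredprimetww1} on $G_c^*$. The sketch above (prepending module contractions) makes this rigorous but needs careful verification to ensure that the subsequent steps of the original contraction sequence remain valid at the quotient level. The disjoint-union argument is a standard lifting once one knows that $2$-WL detects connected components, and the coconnected case is a routine complement argument.
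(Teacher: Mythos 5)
Your proposal is correct and follows essentially the same route as the paper: induction along the modular decomposition (Gallai's three cases), with the prime case handled by combining \Cref{2wl-modules2} with \Cref{3wl-coloredprimetww1}, and the parallel/series cases by disjoint union and complementation. The only point worth tightening is your argument that $\tww(G/M_G)\leq 1$: rather than prepending module contractions to a sequence for $G$ (which leaves you still needing to contract the quotient afterwards), simply note that $G/M_G$ is isomorphic to the induced subgraph of $G$ on one representative per module, and twin-width is monotone under induced subgraphs.
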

\begin{proof}
Let $G$ be a colored graph of twin-width 1.
We show by induction over $\ModTree_G = (\mathcal{M},E,\ell)$
that 3-WL identifies $G$.
Since 3-WL identifies 1-vertex graphs it identifies all leaf modules labeled $\texttt{single}$.
Now let $M \in \mathcal{M}$ be a module such that 3-WL identifies $G[M']$ for all children $M'$ of $M$
in $\ModTree_G$.
If $M$ is labeled $\texttt{parallel}$, then $G[M]$ is the disjoint union of its children 
 and, hence, can be identified by 3-WL.
If $M$ is labeled $\texttt{series}$, then $G[M]$ can be identified by 3-WL since
 3-WL identifies a graph precisely if it identifies its complement.
If $M$ is labeled $\texttt{prime}$, then
$G[M]^*$ is a prime colored graph of twin-width 1,
and thus identified by 3-WL by \Cref{3wl-coloredprimetww1}.
3-WL also identifies all induced subgraphs of modules in $G[M]/M_{G[M]}$.
Therefore, by \Cref{2wl-modules2},
3-WL also identifies $G[M]$. 
By induction, 3-WL identifies $G[M]$ for all $M \in \mathcal{M}$.
Since the root module is~$V(G)$,
3-WL identifies $G$.
\end{proof}

It is not clear if the fourth pebble in the proof for \Cref{3wl-coloredprimetww1} is actually necessary.
Therefore:

\begin{theorem}
The class of colored graphs of twin-width 1 has WL-dimension 2 or 3.
\end{theorem}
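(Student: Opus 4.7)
The plan is to combine the upper bound established in the previous lemma with a short lower-bound argument. The upper bound WL-dimension $\leq 3$ is immediate from the preceding lemma, which shows that $3$-WL identifies every colored graph of twin-width~$1$. So the only remaining task is to argue WL-dimension $\geq 2$, i.e., that $1$-WL does not already suffice on this class.

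For the lower bound I would invoke the fact, cited at the start of this section, that the class of cographs has WL-dimension exactly~$2$~\cite{2wl-cographs}. Since cographs are precisely the graphs of twin-width~$0$, they form a subclass of the colored graphs of twin-width~$1$. Because WL-dimension is monotone under taking subclasses, this gives a witness $G$ (a colored cograph) together with some non-isomorphic graph $H$ that $1$-WL fails to distinguish from $G$; as $G$ lies in the class of twin-width~$1$ graphs, the WL-dimension of the whole class is at least $2$. Combining this with the upper bound of $3$ yields that the WL-dimension lies in $\{2,3\}$.

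There is no real obstacle here; the one thing the theorem does \emph{not} settle is whether the true value is $2$ or $3$. As noted in the remark preceding the theorem, this comes down to whether the fourth pebble in the proof of \Cref{3wl-coloredprimetww1} is actually necessary, which we leave open.
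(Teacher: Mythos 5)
Your proposal is correct and matches the paper's (implicit) argument exactly: the upper bound of $3$ is the preceding lemma, and the lower bound of $2$ follows because cographs (twin-width $0$, hence contained in the class) already have WL-dimension $2$. The paper states the theorem without a written proof, relying on precisely these two facts, so nothing is missing from your account.
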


Finally, note that \cite{rank-width} also shows that
for all $k$, every polynomial-time decidable property
of the class of all
graphs of rank width at most $k$
is expressible in FP+C,
because it admits FP+C-definable
canonization.
The class of graphs of twin-width 1 is such a class,
since it has rank-width bounded by 2.
However, the construction in \cite{rank-width}
uses $2^{2k}$-ary relation symbols
where $k$ is the rank-width of the graph.
There is a method to show definable canonization
for graphs of twin-width 1 which uses only
at most 5-ary relation symbols.
The runtime is $\mathcal{O}(n^5)$
and it works similarly to the 
canonization algorithms described in this section.
A description can be found in Appendix \ref{tww1:definablecanonization}.

\section{Stable classes of twin-width 2}
As exploited in the previous section,
graphs of twin-width \(1\) always admit a contraction sequence which only ever contains a single red edge.
In particular, they have bounded component twin-width and thus also bounded rank- and clique-width.
This is not true for graphs of twin-width \(2\), since unit interval graphs have twin-width at most \(2\) \cite{tww1}
but unbounded rank-width \cite{cw_grids}. Still, it was shown in \cite{sparse_tww2_bounded_tw} that sparse graphs of
twin-width \(2\) have bounded tree-width, with a polynomial bound on the tree-width in terms of the size of a forbidden
semi-induced biclique.
We strengthen and extend this result by showing that graph classes of twin-width \(2\) which are \emph{monadically stable},
i.e., forbid some half-graph \(H_t\) as a semi-induced subgraph, have their rank-width bounded by a linear function in the size of the forbidden semi-induced half-graph. This in turn also gives a linear upper bound on their Weisfeiler-Leman dimension.

We strengthen and extend this result by showing that the rank-width of graphs of twin-width \(2\) is linearly bounded in the size of their largest semi-induced half-graph.
In particular, \emph{monadically stable} graph classes of twin-width \(2\), that is,
classes of twin-width \(2\) which forbid some semi-induced half-graph, have bounded rank-width.

\subsection{Cuts induced by red edges in graphs of twin-width 1}
In contraction sequences of twin-width \(2\), all red components are either red paths or cycles. When performing contractions involving inner vertices of such a red path or cycle, these contractions essentially behave like width-1 contractions, in the sense
that they do not create additional red edges. This motivates our approach:
We first analyze the structure that emerges from \(1\)-contraction sequences, and then apply this understanding to the locally width-1 behaviour within general \(2\)-contraction sequences. During this, it will often be beneficial to talk about
partition sequences instead of contraction sequences.

We thus start our investigation by returning to graphs of twin-width \(1\) and consider the following question:
Given a graph \(G\) with a \(1\)-partition sequence
\((\mathcal{P}_i)_{i\in[n]}\) and a red edge \(PQ\in R(G/\mathcal{P}_i)\),
what can we say about the graph \(G[P,Q]\)? Equivalently, which bipartite graphs
\(G[A,B]\) admit a \(1\)-partition sequence whose final two parts are the two sides \(A\) and~\(B\)?

One example of such bipartite graphs are half-graphs:
\begin{lemma}\label{lem:half-graph:tww1}
For every \(t\in\N\), the half-graph \(H_t\) admits a \(1\)-partition sequence whose final
two parts are the two sides of the half-graph.
\begin{proof}
Let \(H_t'\) be the trigraph obtained from \(H_t\) by making the edge \(v_tw_t\) red.
Now, observe that first contracting the pair \(w_tw_{t-1}\) and then contracting the pair
\(v_tv_{t-1}\) does not create vertices of red degree \(2\), and the resulting graph is isomorphic
to \(H_{t-1}'\). Thus, we can inductively find a \(1\)-partition sequence contracting \(H_t'\)
to \(H_1'\), which is a single red edge.
\end{proof}
\end{lemma}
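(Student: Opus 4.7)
The plan is to prove this by induction on \(t\), but with a strengthened inductive hypothesis. Directly reasoning about the plain graph \(H_t\) is inconvenient because an intermediate trigraph in a contraction sequence is generally not a recolored half-graph. Instead, I would introduce the trigraph \(H_t'\) obtained from \(H_t\) by recoloring the single edge \(v_tw_t\) red, and prove the stronger statement that \(H_t'\) admits a \(1\)-partition sequence whose two final parts are exactly the two sides of the half-graph.

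The base case \(t=1\) is trivial, as \(H_1'\) is already a single red edge between its two sides. For the inductive step, I would perform exactly two contractions to reduce \(H_t'\) to a trigraph isomorphic to \(H_{t-1}'\). First, merge \(w_{t-1}\) with \(w_t\): both are fully black-connected to \(v_1,\ldots,v_{t-1}\), so the only discrepancy lies at \(v_t\), which is adjacent to \(w_t\) but not to \(w_{t-1}\); combined with the preexisting red edge \(v_tw_t\), this leaves a single red edge between the merged class and \(v_t\), yielding red degree \(1\) at both endpoints. Next, merge \(v_{t-1}\) with \(v_t\): \(v_{t-1}\) is black-connected to \(\{w_{t-1},w_t\}\) while \(v_t\) has only a red edge there, so the resulting class has a single red edge to \(\{w_{t-1},w_t\}\), and both are disconnected from \(w_1,\ldots,w_{t-2}\). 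After these two contractions the trigraph is isomorphic to \(H_{t-1}'\), so the inductive hypothesis finishes the argument.

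The main obstacle I anticipate is the bookkeeping around the extra red edge \(v_tw_t\) that must be carried through the induction. Without strengthening the hypothesis to include this red edge, one would either have to describe a partition sequence for \(H_t\) directly (which would then need to create and cancel red edges on the fly, making the red-degree analysis more delicate) or else generalize the statement in some other way. With the strengthened hypothesis in place, the only remaining work is the routine verification, sketched above, that no vertex ever has red degree greater than \(1\) during the two-step reduction.
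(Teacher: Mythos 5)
Your proposal is correct and follows essentially the same route as the paper: both strengthen the statement to the trigraph \(H_t'\) with the edge \(v_tw_t\) recolored red, and both perform the same two contractions (\(w_tw_{t-1}\), then \(v_tv_{t-1}\)) to reduce to \(H_{t-1}'\) by induction. Your write-up merely spells out the red-degree verification that the paper leaves as an observation.
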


Because twin-width is monotone with respect to induced subgraphs, it follows that also every
induced subgraph of a half-graph admits such a \(1\)-partition sequence. We call such graphs
\emph{partial half-graphs}.
We show that this is already the complete picture, that is, every bipartite graph induced by a red edge
in a \(1\)-partition sequence is a partial half-graph.

\begin{lemma}\label{lem:tww1:red_cuts}
For every graph \(G\), every \(1\)-partition sequence \((\mathcal{P}_i)_{i\in[n]}\) of \(G\),
and every two distinct parts \(P,Q\in\mathcal{P}_i\), the graph \(G[P,Q]\) is a partial half-graph.
\begin{proof}
We prove the claim by backwards induction on \(i\). If \(i=n\), all parts are singletons and the claim is clear,
because both edges and non-edges are partial half-graphs.
Thus, assume that the claim is true for all \(j>i\) and pick two distinct parts \(P,Q\in\mathcal{P}_i\).
If \(P,Q\in\mathcal{P}_{i+1}\), the claim is immediate. Thus, assume w.l.o.g. that \(Q=Q_0\dotcup Q_1\)
with \(Q_0,Q_1\in\mathcal{P}_{i+1}\). Because \(P\) has at most one non-homogeneous neighbor in \(\mathcal{P}_{i+1}\),
either \(Q_0\) or \(Q_1\), say \(Q_1\) is homogeneously connected to \(P\), while \(G[P,Q_0]\) is a partial half-graph
by induction, say \(G[P,Q_0]\subseteq H_k\). Now, if \(P\) is fully connected to \(Q_1\), we claim that \(G[P,Q_0\dotcup Q_1]\subseteq H_{k+|Q_1|}\) by embedding \(H_k\) into the bottom part of this larger half-graph, and embedding \(Q_1\)
arbitrarily in the top \(|Q_1|\)-many elements of the respective part in \(H_{k+|Q_1|}\).
If \(P\) is disconnected to \(Q_1\), we instead embed \(H_k\) into the top part of the half-graph \(H_{k,k+|Q_1|}\)
and embed \(Q_1\) into the smallest \(|Q_1|\)-many elements of the respective part in \(H_{k+|Q_1|}\).
\end{proof}
\end{lemma}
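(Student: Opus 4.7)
The plan is to induct backwards on \(i\), starting from \(i=n\) (the discrete partition) and decreasing. For the base case \(i=n\), the parts \(P\) and \(Q\) are singletons, so \(G[P,Q]\) is either a single edge or the empty bipartite graph on two vertices, both of which embed into \(H_1\) and are therefore partial half-graphs.

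For the inductive step, fix \(i<n\) and assume the claim holds for all \(j>i\). Since \(\mathcal{P}_i\) arises from \(\mathcal{P}_{i+1}\) by merging exactly two parts, there are three cases for a chosen pair \(P,Q\in\mathcal{P}_i\): either both parts already appear in \(\mathcal{P}_{i+1}\) (done by induction), or one of them splits in \(\mathcal{P}_{i+1}\). By symmetry I may assume \(P\in\mathcal{P}_{i+1}\) and \(Q=Q_0\dotcup Q_1\) with \(Q_0,Q_1\in\mathcal{P}_{i+1}\). The crucial point is the width-1 assumption: in the trigraph \(G/\mathcal{P}_{i+1}\) the red degree of \(P\) is at most one, so \(P\) can be heterogeneously connected to at most one of \(Q_0,Q_1\). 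Relabeling if necessary, I may assume \(Q_1\) is homogeneously connected to \(P\), i.e., \(G[P,Q_1]\) is either complete or empty.

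Now I invoke the inductive hypothesis on the pair \(P,Q_0\in\mathcal{P}_{i+1}\) to obtain a partial half-graph embedding \(G[P,Q_0]\subseteq H_k\), with \(P\) sitting on (say) the \(v\)-side and \(Q_0\) on the \(w\)-side. I extend this to an embedding into \(H_{k+|Q_1|}\) by inserting the vertices of \(Q_1\) into the \(w\)-side. If \(P\) is fully connected to \(Q_1\), I place \(Q_1\) into the top \(|Q_1|\) positions of the \(w\)-side (indices above any used by the image of \(P\)), so that every \(v\)-index corresponding to a vertex of \(P\) is below every new \(w\)-index, giving all edges. If \(P\) is disconnected from \(Q_1\), I instead shift: embed the old picture into the top \(k\) coordinates of \(H_{k+|Q_1|}\) and place \(Q_1\) in the bottom \(|Q_1|\) positions of the \(w\)-side, so no edges appear between \(P\) and \(Q_1\). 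In either case the resulting map realizes \(G[P,Q_0\dotcup Q_1]\) as an induced subgraph of a half-graph.

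I expect the only delicate point to be the bookkeeping in the embedding extension — choosing the correct side (top versus bottom) for \(Q_1\) depending on whether \(P\) is fully connected or disconnected, and verifying that the shift preserves all edges of \(G[P,Q_0]\) coming from the inductive embedding. The width-1 constraint does the main structural work by ruling out the problematic case in which both \(Q_0\) and \(Q_1\) are heterogeneously attached to \(P\); everything else reduces to routine manipulation of the half-graph indices.
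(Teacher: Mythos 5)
Your proposal is correct and follows essentially the same route as the paper's own proof: backwards induction along the partition sequence, using the width-\(1\) bound to conclude that at most one of \(Q_0,Q_1\) is non-homogeneously attached to \(P\), and then extending the inductive half-graph embedding by placing \(Q_1\) at the top or bottom of the order depending on whether \(P\) is fully connected to or disconnected from it. The only delicate point you flag (the index bookkeeping in the extension) is handled identically in the paper.
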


Equipped with this characterization of bipartite graphs induced by red edges in graphs of twin-width \(1\),
we next look at what kinds of subgraphs are forced by high connectivity between two parts.

\begin{lemma}\label{lem:partial_half-graph_has_rank=ladder-index}
For every \(k\in\N\), a partial half-graph \(H[L,R]\) contains an induced half-graph \(H_t\) if and only if \(\rk_H(L,R)\geq t\).
\begin{proof}
Because deleting twins and isolated vertices neither changes the rank nor affects the existence
of induced half-graphs, we may assume that \(H\) contains neither twins nor isolated vertices.
Since the vertices of either side of a half-graph are linearly ordered by inclusion of their neighborhoods,
the vertices of either side of a partial half-graph are linearly pre-ordered by inclusion of their neighborhoods.
But since any non-trivial equivalence class in this ordering would be a twin-class, the vertices of either side
of \(H\) are also linearly ordered by inclusion of their neighborhoods, and we can write
\(L=\{\ell_1,\dots,\ell_n\}\) and \(R=\{r_1,\dots,r_m\}\) for some \(n,m\in\N\),
such that \(N_H(\ell_i)\subseteq N_H(\ell_j)\) if and only if \(i\geq j\) and \(N_H(r_i)\subseteq N_H(r_j)\)
if and only if \(i\leq j\). Since neither side contains an isolated vertex, the vertices \(\ell_1\) must
be fully connected to \(R\), while \(\ell_n\) still has at least one neighbor. Since the neighborhoods
of \(\ell_i\) and \(\ell_j\) must differ for \(i\neq j\), this implies \(n\leq m\) and by the same argument
with \(L\) and \(R\) exchanged that \(n=m\), and \(N_H(\ell_i)=\{r_i,\dots,r_n\}\). Thus, \(H\) is
isomorphic to a standard half-graph \(H_n\). But this graph has rank exactly \(n\).
\end{proof}
\end{lemma}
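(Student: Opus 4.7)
The plan is to prove both directions of the equivalence, with the converse being the one requiring more work.

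The forward direction is straightforward. If \(H_t\) is induced in \(H[L,R]\), then there exist \(L'\subseteq L\) and \(R'\subseteq R\) with \(|L'|=|R'|=t\) such that \(H[L',R']\cong H_t\). With respect to the natural ordering of the half-graph, the biadjacency matrix of \(H_t\) is a triangular \(0/1\)-matrix with all \(1\)'s on and below the diagonal, which has rank \(t\) over \(\F_2\). Since it occurs as a submatrix of \(\adj_H(L,R)\), this gives \(\rk_H(L,R)\geq t\).

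For the converse, my first move is to normalize \(H[L,R]\) so that both sides are free of isolated vertices and of twins. Isolated vertices contribute zero rows or columns, so their removal changes neither \(\rk_H(L,R)\) nor the existence of an induced \(H_t\); twins contribute duplicate rows or columns, so collapsing each twin class to a single representative again preserves both quantities (and any induced \(H_t\) in the reduced graph lifts back to the original). After this cleanup I may assume that \(H[L,R]\) has no isolated vertices and no twins.

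The key structural observation is that in a partial half-graph the neighborhoods on each side are linearly pre-ordered by inclusion, since this already holds in the ambient \(H_k\) and passes to induced subgraphs. Removing twins upgrades this pre-order to a strict linear order, so I can enumerate \(L=\{\ell_1,\dots,\ell_n\}\) and \(R=\{r_1,\dots,r_m\}\) with \(N(\ell_1)\supsetneq\dots\supsetneq N(\ell_n)\) and \(N(r_1)\subsetneq\dots\subsetneq N(r_m)\). Because no vertex is isolated, \(N(\ell_n)\neq\emptyset\); because \(\ell_1\) has the largest neighborhood and there is no \(r\) missing from any \(\ell_i\)'s neighborhood, \(N(\ell_1)=R\). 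A strict chain of nonempty subsets of \(R\) has length at most \(|R|\), giving \(n\leq m\), and the symmetric argument yields \(n=m\). Combined with the strict inclusion chain this forces \(N(\ell_i)=\{r_{n-i+1},\dots,r_n\}\), so \(H[L,R]\) is exactly the standard half-graph \(H_n\).

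It remains only to compute \(\rk_H(L,R)\) for this cleaned-up graph: the biadjacency matrix of \(H_n\) is (up to reordering) lower-triangular with \(1\)'s on the diagonal, hence of rank exactly \(n\). Thus after cleanup \(\rk_H(L,R)=n\), and \(H\) itself contains \(H_n\) as an induced subgraph, so the assumption \(\rk_H(L,R)\geq t\) gives \(n\geq t\) and an induced \(H_t\) inside \(H_n\subseteq H[L,R]\). The main obstacle to watch for is justifying that the normalized graph is really the \emph{full} half-graph \(H_n\) rather than merely an induced subgraph of a larger \(H_k\); this is precisely where both the twin-freeness and the absence of isolated vertices are needed simultaneously, and where the inclusion order must be shown to be not just strict but also saturated.
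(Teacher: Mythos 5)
Your proof is correct and follows essentially the same route as the paper's: normalize by deleting twins and isolated vertices, use the inclusion (pre-)order of neighborhoods on each side to show the reduced graph is exactly the standard half-graph \(H_n\), and read off the rank. The only blemish is an indexing slip --- with your conventions (\(N(\ell_1)=R\) largest and \(N(r_m)\) largest, so \(|N(\ell_i)|=n-i+1\)) the up-set property forces \(N(\ell_i)=\{r_i,\dots,r_n\}\) rather than \(\{r_{n-i+1},\dots,r_n\}\) --- but this does not affect the conclusion that the reduced graph is isomorphic to \(H_n\).
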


Similarly, we can also relate the size of a maximal matching to the size of a maximal biclique.
\begin{lemma}\label{lem:partial_half-graph_has_matchings<=2biclique}
For every \(k\in\N\), if a partial half-graph \(H[L,R]\) contains a matching of size \(2t-1\), then \(H\) contains a biclique \(K_{t,t}\).
\begin{proof}
Let \(\iota\colon H\to H_n\) be an embedding, and label the vertices of \(L\) as
\(\ell_1,\dots,\ell_{|L|}\) according to their order in the embedding, and the vertices of \(R\) as
\(r_1,\dots,r_{|R|}\) in the reverse order of their embedding. This guarantees that the two enumerations
of \(L\) and \(R\) are both ordered according to inclusion of their neighborhoods, with \(\ell_1\) and \(r_1\)
having maximal neighborhoods.

If the graph \(H\) is \(K_{k,k}\)-free, then in particular, the elements \(\ell_1,\dots,\ell_k\) are not fully
connected to the elements \(r_1,\dots,r_k\), which implies \(\iota(\ell_k)>\iota(r_k)\).
But this in particular implies that every edge of \(H\) is incident to the vertex set \(\{\ell_1,\dots,\ell_{k-1},r_1,\dots,r_{k-1}\) of size \(2k-2\), which forbids a matching of size \(2k-1\).
\end{proof}
\end{lemma}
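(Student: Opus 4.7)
The plan is to prove the contrapositive: assume that $H$ is $K_{t,t}$-free and produce a vertex cover of $H$ of size $2t-2$, which by König's theorem (or just the trivial bound matching $\leq$ vertex cover) rules out matchings of size $2t-1$.

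First, I would invoke the fact that $H$ is a partial half-graph to fix an embedding $\iota\colon H\hookrightarrow H_n$ for some $n$. Using this embedding, I would label $L=\{\ell_1,\dots,\ell_{|L|}\}$ in the order inherited from the left part of $H_n$ and $R=\{r_1,\dots,r_{|R|}\}$ in the \emph{reverse} order inherited from the right part. The point of the reversal is that both enumerations then respect inclusion of neighborhoods, with $\ell_1$ and $r_1$ having the largest neighborhoods on their respective sides; in particular $N_H(\ell_i)\supseteq N_H(\ell_j)$ whenever $i\leq j$ and similarly for $R$. This is essentially the same ordering that was exploited in the proof of \Cref{lem:partial_half-graph_has_rank=ladder-index}.

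Second, I would use the $K_{t,t}$-freeness to locate a threshold. Since $\{\ell_1,\dots,\ell_t\}$ cannot be fully connected to $\{r_1,\dots,r_t\}$, and since by the ordering the pair with the fewest mutual edges is $(\ell_t,r_t)$, the missing edge must be $\ell_t r_t$. Translating this back through the embedding, $\iota(\ell_t)>\iota(r_t)$ in the indexing of $H_n$; by monotonicity of both embedded sequences, $\iota(\ell_i)\geq \iota(\ell_t)>\iota(r_t)\geq \iota(r_j)$ for every $i,j\geq t$, and the edge relation of $H_n$ then forces $\ell_i r_j\notin E(H)$ for all such $i,j$. Equivalently, every edge of $H$ is incident to the set
\[C \;\coloneqq\; \{\ell_1,\dots,\ell_{t-1}\}\cup\{r_1,\dots,r_{t-1}\},\]
which has size $2t-2$, so $C$ is a vertex cover. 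Hence $H$ admits no matching of size $2t-1$, giving the contrapositive.

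The main subtlety, rather than obstacle, is getting the two orderings on $L$ and $R$ to align correctly: a standard half-graph orients its two sides in opposite directions with respect to neighborhood inclusion, so one ordering must be reversed before the argument "the smallest pair is $(\ell_t,r_t)$" is valid. Once this bookkeeping is handled, the argument is a direct consequence of the linear nested-neighborhood structure of partial half-graphs established in the previous lemma, and no further combinatorial machinery is required.
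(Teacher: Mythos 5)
Your proposal is correct and follows essentially the same route as the paper's proof: order both sides by neighborhood inclusion via the embedding into $H_n$ (reversing one side), use $K_{t,t}$-freeness to conclude that $\ell_t r_t$ is a non-edge, and deduce that $\{\ell_1,\dots,\ell_{t-1},r_1,\dots,r_{t-1}\}$ covers every edge, excluding a matching of size $2t-1$. Your write-up is in fact slightly more careful than the paper's, which leaves the monotonicity step and the vertex-cover framing implicit.
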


\subsection{Stable graphs of twin-width 2 have bounded rank-width}
With these preparations, we are ready to proceed to show that \(H_k\)-semi-free graphs of twin-width~\(2\) have rank-width \(O(k)\). Our general strategy is similar to that of \cite{sparse_tww2_bounded_tw} and is as follows:
Assuming \(G\) has large rank-width, \Cref{thm:prelims:rw_well-linked_sets} guarantees the existence of a large
\(\rk\)-well linked set. Investigating the \(2\)-partition sequence on this well-linked set, we can extract a single red edge
inside a longer red path which has good connectivity properties. Now, going back through the partition sequence, we note that as long as the red path
persists, the (un-)contraction sequence applied to the red edge of this path is actually a \(1\)-partition sequence,
since the extra red edges of the path forbid additional red edges between the two parts in question. Thus,
we find a large bipartite graph of twin-width \(1\) with good connectivity properties, which by the previous section
implies the existence of a large half-graph.

We start by showing that graphs of twin-width \(2\) which contain a large well-linked set must also contain a highly connected red path at some point along the partition sequence.
\begin{lemma}\label{lem:stable:existence_of_highly-connected_path}
Let \(t\geq 0\) be an integer and \(k\coloneqq 2t+9\).
If \(G\) is a graph which contains a \(\rk\)-well-linked set \(W\) of size \(11k-5\) and
\((\mathcal{P}_i)_{i\in[n]}\) is a \(2\)-partition sequence of \(G\), then
there exists some \(i\in[n]\) and four parts \(X_1,X_2,X_3,X_4\in\mathcal{P}_i\) such that
\begin{itemize}
\item \(X_1X_2X_3X_4\) forms a red \(4\)-path in this order in \(G/\mathcal{P}_i\),
\item there is no red edge \(X_1X_4\),
\item \(\kappa^{\rk}_{G[X_1\cup X_2\cup X_3\cup X_4]}(X_1,X_4)\geq t\).
\end{itemize}
\begin{proof}
Let \(i\in[n]\) be minimal such that every part of \(\mathcal{P}_i\) contains at most \(2k-1\) vertices of \(W\).
By minimality, \(\mathcal{P}_i\) contains a part \(Z\) which contains at least \(k\) vertices of \(W\).

For some \(d\in\N\) and \(R\in\{=,<,>,\leq,\geq\}\), let \(N^{R d}_{\red}(Z)\) be the set of parts in \(\mathcal{P}_i\) with the specified
distance from \(Z\) in the red graph of \(G/\mathcal{P}_i\).
By abuse of notation, we also denote the set of vertices of \(G\) contained in those parts by \(N^{R d}_\red(Z)\).

Because the red graph has maximum degree at most \(2\), \(N^{\leq d}_{\red}(Z)\) contains at most \(2d+1\) parts
and \(N^{=d}_{\red}(Z)\) contains at most \(2\) parts.
In particular, \(N^{\leq 2}_{\red}\) consists of at most five parts, which together contain at most \(10k-5\) vertices from \(W\).
In particular, the set \(W_3\coloneqq W\cap\bigcup N^{\geq 3}(Z)\) contains at least \(k\) vertices.

Thus, because \(W\) is well-linked, we find
\[\kappa^{\rk}_G(Z,W_3)\geq\min\{|Z|,|W_3|\}\geq k.\]

Now, every part of \(\mathcal{P}_i\) which intersects \(W_3\) is either one of the at most two parts in \(N^{=3}_{\red}(Z)\),
or is homogeneously connected to all parts in \(N^{\leq 2}_{\red}(Z)\). But the latter parts together have rank at most \(5\) to \(N^{\leq2}_{\red}(Z)\),
which, using \Cref{lem:properties_of_rk*}(b) implies that
\[\kappa^{\rk}_{G\left[ N^{\leq 3}_{\red}(Z)\right]}\left(Z, N^{=3}_{\red}(Z)\right)\geq k-5=2t+4.\]
In particular, \(N^{=3}_{\red}(Z)\) is non-empty.

First assume that it contains two parts,
which means that there is a red \(7\)-path \(L_3\)-\(L_2\)-\(L_1\)-\(Z\)-\(R_1\)-\(R_2\)-\(R_3\).
We claim that at least one of the two \(4\)-paths \(L_3\)-\(L_2\)-\(L_1\)-\(Z\) or
\(Z\)-\(R_1\)-\(R_2\)-\(R_3\) fulfills the requirements of the claim.
Because the first two properties are clear, we may assume for the sake of contradiction
that both \(\kappa^{\rk}_{G[Z\cup R_1\cup R_2\cup R_3]}(Z,R_3)\leq t-1\)
and \(\kappa^{\rk}_{G[Z\cup L_1\cup L_2\cup L_3]}(Z,L_3)\leq t-1\).
But this means that there exists two cuts \(L_1\cup L_2=L_Z\dotcup L_P\)
and \(R_1\cup R_2=R_Z\dotcup R_P\) such that
\(\rk_G(Z\cup L_Z,L_3\cup L_P)\leq t-1\) and
\(\rk_G(Z\cup R_Z,R_3\cup R_P)\leq t-1\).
But then,
\begin{align*}
2t+4
 &\leq \kappa^{\rk}_{G[ N^{\leq 3}_{\red}(Z)]}(Z,L_3\cup R_3)\\
 &\leq \rk_G(Z\cup L_Z\cup R_Z,L_P\cup R_P\cup L_3\cup R_3)\\
 &\leq \rk_G(Z\cup L_Z,L_P\cup L_3)+\rk_G(Z\cup R_Z,R_P\cup R_3)\\
 &\quad	+\rk_G(L_Z,R_3\cup R_P)+\rk_G(R_Z,L_3\cup L_P)\\
 &\leq 4+2(t-1)=2t+2,
\end{align*}
which is a contradiction.

The case that instead of a \(7\)-path we get a red \(6\)-cycle \(P\)-\(L_2\)-\(L_1\)-\(Z\)-\(R_1\)-\(R_2\)-\(P\)
is analogous to the first case with the only difference being that we included \(\rk_G(Z,P)\) twice in the above calculation.
Adding it once more in order to cancel it out, we find that
\[2t+4\leq \kappa^{\rk}_{G[ N^{\leq 3}_{\red}(Z)]}(Z,P)\leq 2t+3,\]
which is again a contradiction.

Thus, we are left with the case that there is only one red \(4\)-path starting at \(Z\), say it is \(Z\)-\(R_1\)-\(R_2\)-\(P\),
and let \(L\) be the (possibly empty) union of parts which form the second red path attached to \(Z\). Because \(L\) consists of
at most two parts, and the only outgoing red edge connects it to \(Z\), we get \(\rk(L,R_1\cup R_2\cup P)\leq 2\).
Thus, we find 
\begin{align*}
\kappa^{\rk}_{G[Z\cup R_1\cup R_2\cup P]}(Z,P)
&\geq\kappa^{\rk}_{G[Z\cup R_1\cup R_2\cup P\cup L]}(Z\cup L,P)-\kappa^{\rk}_{G[R_1\cup R_3\cup P\cup L]}(L,R_1\cup R_2\cup P)\\
&\geq\kappa^{\rk}_{G[N^{\leq 3}_{\red}(Z)]}(Z,P)-2\\
&\geq 2t+2>t.\qedhere
\end{align*}
\end{proof}
\end{lemma}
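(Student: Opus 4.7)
The plan is to locate a step \(i\) in the partition sequence where \(W\) has been forced to concentrate in a single part \(Z\) while staying dispersed elsewhere, and then to exploit the red-degree-\(2\) constraint to extract a short red path near \(Z\) whose endpoint connectivity inherits from the well-linkedness of \(W\).

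First I would choose \(i\) to be the smallest index such that every part of \(\mathcal{P}_i\) meets \(W\) in at most \(2k-1\) vertices. Such an \(i\) exists (the discrete partition trivially works) and \(i>1\) since \(|W|=11k-5\). Going one merging step back to \(\mathcal{P}_{i-1}\), minimality forces some merged part to contain at least \(2k\) vertices of \(W\), so in \(\mathcal{P}_i\) one of the two pre-merge parts, call it \(Z\), already contains at least \(k\) vertices of \(W\). This \(Z\) is the anchor for the rest of the argument.

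Next I would exploit the red-degree-\(2\) bound. The red graph on \(G/\mathcal{P}_i\) has maximum degree \(2\), so the set of parts at red distance at most \(d\) from \(Z\) forms a subpath or subcycle of size at most \(2d+1\). In particular there are at most five such parts at red distance at most \(2\), which together absorb at most \(5(2k-1)\) vertices of \(W\); consequently the set \(W_3\) of \(W\)-vertices lying in parts at red distance at least \(3\) from \(Z\) still has size at least \(k\). Well-linkedness of \(W\) yields \(\kappa^{\rk}_G(Z,W_3)\geq k\). I then want to push this estimate down through the closed red ball of radius \(2\) around \(Z\): any part outside red distance \(3\) is, by the red-degree bound, homogeneously connected to each of the (at most five) parts in the red ball of radius \(2\), so its total rank contribution towards that ball is at most \(5\). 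Invoking \Cref{lem:properties_of_rk*}(b) then yields rank at least \(k-5=2t+4\) between \(Z\) and the (at most two) parts at red distance exactly \(3\) from \(Z\).

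Finally I would analyze the local shape of the red structure of radius \(3\) around \(Z\). The three cases are: a red \(7\)-path centered at \(Z\), a red \(6\)-cycle through \(Z\), or only a single red \(4\)-path emanating from \(Z\) (with a shorter second branch). In each case I would produce a red \(4\)-path \(X_1X_2X_3X_4\) with \(Z\in\{X_2,X_3\}\) realizing \(\kappa^{\rk}_{G[X_1\cup X_2\cup X_3\cup X_4]}(X_1,X_4)\geq t\), by contradiction: assuming both the ``left'' and ``right'' \(4\)-paths admit small cuts, I combine them via \Cref{lem:properties_of_rk*}(b) into a cut of the entire red-\(3\)-ball of rank at most \(2(t-1)+O(1)\), contradicting the lower bound \(2t+4\). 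The cycle case pays an extra \(+1\) for a shared edge, while the asymmetric case absorbs the at-most-\(2\) rank contribution of the opposite branch. The main obstacle is this last bookkeeping of small rank contributions on top of the subadditive splitting; the slack between \(|W|=11k-5\) and \(k=2t+9\) is precisely what makes the arithmetic close.
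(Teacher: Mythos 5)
Your plan follows the paper's proof essentially step for step: the same minimal choice of \(i\) yielding a part \(Z\) with at least \(k\) vertices of \(W\), the same red-ball counting to isolate \(W_3\) and derive \(\kappa^{\rk}_{G[N^{\leq 3}_{\red}(Z)]}(Z,N^{=3}_{\red}(Z))\geq k-5=2t+4\), and the same three-case analysis (7-path, 6-cycle, single 4-path) resolved by subadditivity of the rank-connectivity function. The approach and all the key constants match; only the routine bookkeeping in the final cases remains to be written out, exactly as in the paper.
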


Next, we want to show that the red \(4\)-path whose existence we proved in the previous lemma
forces the existence of a large semi-induced half-graph.
\begin{lemma}\label{lem:stable:highly-connected_path_implies_half-graph}
Let \(G\) be a graph with a \(2\)-partition sequence \((\mathcal{P}_i)_{i\leq n}\) of \(G\),
and assume for some \(m\in[n]\), there exists four parts \(X_1,X_2,X_3,X_4\in\mathcal{P}_m\) such that
\begin{itemize}
\item \(X_1\), \(X_2\), \(X_3\), \(X_4\) form a red path in this order in \(G/\mathcal{P}_m\),
	and there is no red edge \(X_1X_4\),
\item \(\kappa^{\rk}_{G[X]}(X_1,X_4)\geq t+6\),
\end{itemize}
where \(X\coloneqq X_1\cup X_2\cup X_3\cup X_4\).
Then \(G\) contains a semi-induced half-graph \(H_t\).
\begin{proof}
Let \(i>m\) be minimal such that there do not exist four parts \(Y_1,Y_2,Y_3,Y_4\in\mathcal{P}_i\)
with the following properties:
\begin{itemize}
\item the four parts \(Y_1\), \(Y_2\), \(Y_3\), and \(Y_4\) form a red path in this order in the trigraph \(G/\mathcal{P}_i\), and there is no red edge \(Y_1Y_4\),
\item \(Y_2\subseteq X_2\) and \(Y_3\subseteq X_3\).
\end{itemize}
Note that such an \(i\) exists, because the graph \(G=G/\mathcal{P}_{|G|}\) contains no red edges
and thus surely does not contain a red \(4\)-path.

\begin{claim}
For every \(i>j\geq m\), there is a single red edge crossing the cut \((X_2,X_3)\) in \(G/\mathcal{P}_j\),
which is the central edge in a red \(4\)-path.
\begin{claimproof}
The claim is true for \(j=m\), where \(X_2X_3\) itself is this singe red edge, which is contained in the red path \(X_1X_2X_3X_4\).
Now, assume the claim is true for some \(j\) with \(i-1>j\geq m\), with red \(4\)-path \(Y_1Y_2Y_3Y_4\).
If \(Y_2\) and \(Y_3\) are still parts of \(\mathcal{P}_{j+1}\), then \(Y_2Y_3\) is still the unique
red edge crossing the cut \((X_2,X_3\) in \(G/\mathcal{P}_{j+1}\),
since uncontracting any part which is not incident to a red edge crossing the cut \((X_2,X_3)\) can never
create or remove a new red edge crossing this cut. Moreover, by minimality of \(i\), there must still exist a red
\(4\)-path \(Y_1'Y_2'Y_3'Y_4'\) whose central edge is crosses the cut \((X_2,X_3)\), which proves the claim for \(j+1\).
Thus assume w.l.o.g. that \(Y_2\notin\mathcal{P}_{j+1}\), meaning that \(Y_2\) is obtained by contracting two
parts \(Y_2^{(1)}\) and \(Y_2^{(2)}\) in \(\mathcal{P}_{j+1}\). But since \(Y_3\) still has the red neighbor \(Y_4\),
it cannot share a red edge with both \(Y_2^{(1)}\) and \(Y_2^{(2)}\). Thus, there is still at most one red edge
crossing the cut \((X_2,X_3)\). But since minimality of \(i\) guarantees the existence of a red \(4\)-path
whose central edge crosses the cut \((X_2,X_3)\), the claim follows.
\end{claimproof}
\end{claim}

Because \(i\) is minimal, we know that \(G/\mathcal{P}_{i-1}\) does contain such a red \(4\)-path
\(Y_1Y_2Y_3Y_4\). In \(G/\mathcal{P}_i\), one of these four parts, say \(Y_i\)
splits into two parts \(Y_i^1\) and \(Y_i^2\) such that either \(i\in\{2,3\}\) and \(Y_i^1\) and \(Y_i^2\)
share no red edge, or one of the red neighbors \(Y_j\) of \(Y_i\) in \(G/\mathcal{P}_{i-1}\) is not incident
via a red edge to either of the parts \(Y_i^1\) and \(Y_i^2\).
In the former case, we say that the part \(Y_i\) is broken, while in the latter case, we say that
the edge \(Y_iY_j\) is broken.
In this case, we say that the red edge \(Y_iY_j\) is broken. Note that the edge \(Y_iY_j\) being broken
implies \(\rk_G(Y_i,Y_j)=1\).

By symmetry, we may assume that either \(Y_3\) is broken, or one of the edges \(Y_2Y_3\) or \(Y_3Y_4\) is broken.

If the red edge \(Y_2Y_3\) is broken, then consider the bipartite graph \(G[X_2,X_3]\).
\begin{claim}
The bipartite graph \(G[X_2,X_3]\) admits a \(1\)-partition sequence whose final two parts are \(X_2\) and \(X_3\).
\begin{claimproof}
Consider the partition sequence induced by the partition sequence \((\mathcal{P}_j)_{m\leq j\leq n}\) on \(X_2\cup X_3\).
By the previous claim, there exists exactly one red edge crossing the cut \((X_2,X_3)\) in \(G/\mathcal{P}_j\) for all
\(m\leq j<i\). Since the edge \(Y_2Y_3\) is broken, there is no red edge crossing \((X_2,X_3)\) in \(G/\mathcal{P}_i\),
and thus there is also no red edge crossing \((X_2,X_3)\) in \(G/\mathcal{P}_j\) for any \(j\geq i\).
Thus, the partition sequence induced on \(G[X_2,X_3]\) contains at most a single red edge in every step.
\end{claimproof}
\end{claim}
Thus, \(G[X_2,X_3]\) is a partial half-graph by \Cref{lem:tww1:red_cuts}.
Because furthermore
\[\rk_G(X_2,X_3)\geq\rk_G(X_1\cup X_2,X_3\cup X_4)-2\geq\kappa^{\rk}_{G[X]}(X_1,X_4)>t,\]
\Cref{lem:partial_half-graph_has_rank=ladder-index} implies that \(G[X_2,X_3]\) contains
a semi-induced half-graph \(H_t\).

If instead the edge \(Y_3Y_4\) is broken, consider the graph \(G[X_2,X_3\setminus Y_3]\). Because there is no longer
a red edge crossing the cut \((X_2,X_3\setminus Y_3)\) in \(G/\mathcal{P}_i\),
we can argue just as in the previous claim that there exists a \(1\)-partition sequence
of \(G[X_2,X_3\setminus Y_3]\) whose final two parts are \(X_2\) and \(X_3\setminus Y_3\).

Moreover, because \(Y_3\) splits into at most two parts in \(\mathcal{P}_i\),
both of which are homogeneously connected to all other parts of \(X_3\),
we have \(\rk_G(Y_3,X_3\setminus Y_3)\leq 2\).
But then,
\begin{align*}
\rk_G(X_2,X_3\setminus Y_3)
&\geq \rk_G(X_2\cup Y_3,X_3\setminus Y_3)-2\\
&\geq \rk_G(X_1\cup X_2\cup Y_3,(X_3\setminus Y_3)\cup X_4)-2\\
	&\quad-\rk_G(X_1,X_4\cup X_3\setminus Y_3)-\rk_G(X_4,X_1\cup X_2)-\rk_G(X_4,Y_3)\\
&\geq \kappa^{\rk}_{G[X]}(X_1,X_4)-6\geq t.
\end{align*}
Thus, \Cref{lem:partial_half-graph_has_rank=ladder-index} again implies
that \(G[X_2,X_3\setminus Y_3]\) contains a semi-induced half-graph \(H_t\).

Finally, assume that the part \(Y_3\) is broken, and that \(Y_2\) is not incident via a red
edge to the part \(Y_3^2\). Then, we consider the bipartite graph \(G[X_2,X_3\setminus Y_3^1]\),
which again no longer contains a red edge with respect to the partition \(\mathcal{P}_i\).
Analogous to before, we obtain a \(1\)-partition sequence whose final two parts are \(X_2\) and \(X_3\setminus Y_3^1\).
Further, because \(Y_1Y_2Y_3^1Y_4\) cannot be a red \(4\)-path by assumption, either the edge \(Y_2Y_3\) is broken,
in which case we are done, or \(Y_3^1Y_4\) is not a red edge. But then, \(Y_3^1\)
shares no red edge with any part in \(X_3\) besides itself, and thus has rank at most \(1\) to \(X_3\setminus Y_3^1\).
Thus,
\begin{align*}
\rk_G(X_2,X_3\setminus Y_3^1)
&\geq \rk_G(X_2\cup Y_3^1,X_3\setminus Y_3^1)-1\\
&\geq \rk_G(X_1\cup X_2\cup Y_3^1,(X_3\setminus Y_3^1)\cup X_4)-1\\
	&\quad-\rk_G(X_1,X_4\cup X_3\setminus Y_3^1)-\rk_G(X_4,X_1\cup X_2)-\rk_G(X_4,Y_3^1)\\
&\geq \kappa^{\rk}_{G[X]}(X_1,X_4)-4>t,
\end{align*}
which again implies using \Cref{lem:partial_half-graph_has_rank=ladder-index} that
\(G[X_2,X_3\setminus Y_3^1]\) contains a semi-induced half-graph~\(H_t\).
\end{proof}
\end{lemma}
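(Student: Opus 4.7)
The plan is to track the red 4-path \(X_1X_2X_3X_4\) forward through the partition sequence and identify the first step \(i>m\) at which no red 4-path with inner parts contained in \(X_2\) and \(X_3\) survives. The idea is that before this moment the cut \((X_2,X_3)\) behaves like a width-1 cut, so \Cref{lem:tww1:red_cuts} classifies it as a partial half-graph, and then \Cref{lem:partial_half-graph_has_rank=ladder-index} converts rank into an induced \(H_t\).

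First I would prove the auxiliary claim that while the 4-path survives (for all \(m\leq j<i\)), exactly one red edge crosses the cut \((X_2,X_3)\) in \(G/\mathcal{P}_j\), and that red edge is the central edge of a red 4-path with middle parts inside \(X_2\) and \(X_3\). This is an induction going forward from step \(m\): uncontracting (i.e.\ moving to finer \(\mathcal{P}_{j+1}\)) a part not touching the crossing edge cannot create a new crossing edge, and when a part incident to the crossing edge refines into two pieces, one of these pieces must lose its red edge to the opposite side since the opposite side has red degree at most 2 and must still be incident to its outer red neighbor. Minimality of \(i\) then guarantees the 4-path persists.

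Next I would do a case split on how the structure breaks at step \(i\). By symmetry the break concerns either the central edge \(Y_2Y_3\), a boundary edge \(Y_3Y_4\), or a middle part \(Y_3\) splitting into two pieces that are not red-adjacent. In the central case, from step \(i\) on there is no red edge across \((X_2,X_3)\), so the partition sequence restricted to \(X_2\cup X_3\) is a 1-partition sequence with final parts \(X_2\) and \(X_3\); by \Cref{lem:tww1:red_cuts}, \(G[X_2,X_3]\) is a partial half-graph, and because
\[\rk_G(X_2,X_3)\geq \rk_G(X_1\cup X_2,X_3\cup X_4)-2\geq \kappa^{\rk}_{G[X]}(X_1,X_4)-2>t,\]
\Cref{lem:partial_half-graph_has_rank=ladder-index} yields an induced \(H_t\). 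In the other two cases I would carve out the small offending piece — all of \(Y_3\) when the edge \(Y_3Y_4\) breaks, or the sub-part \(Y_3^1\) when \(Y_3\) splits — obtain a 1-partition sequence of \(G[X_2,X_3\setminus Y]\) for that piece \(Y\), and apply the same pair of lemmas.

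The main obstacle in this plan is the arithmetic of the final step: after carving out the bad piece I have to show that \(\rk_G(X_2,X_3\setminus Y)>t\). This is where the \(+6\) buffer in \(\kappa^{\rk}_{G[X]}(X_1,X_4)\geq t+6\) is spent, split across subadditive corrections for \(X_1\), \(X_4\), and the carved piece. Concretely, I would use subadditivity to write
\[\rk_G(X_2\cup Y, X_3\setminus Y)\geq \rk_G(X_1\cup X_2\cup Y, (X_3\setminus Y)\cup X_4)-\rk_G(X_1,\cdot)-\rk_G(X_4,\cdot),\]
bound \(\rk_G(X_1,\cdot)\) and \(\rk_G(X_4,\cdot)\) by 2 each (since \(X_1\) and \(X_4\) are endpoints of the red path and thus have only one red edge toward the opposite side of the cut), subtract another at most 2 to pass from \(\rk_G(X_2\cup Y,X_3\setminus Y)\) to \(\rk_G(X_2,X_3\setminus Y)\) using that \(Y\) has low rank to \(X_3\setminus Y\), and conclude that the remainder exceeds \(t\). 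Once this bookkeeping is in place, \Cref{lem:partial_half-graph_has_rank=ladder-index} delivers the desired semi-induced \(H_t\) uniformly in all cases.
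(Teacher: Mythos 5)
Your proposal follows essentially the same route as the paper's proof: the same minimal index \(i\), the same inductive claim that a unique red edge crosses \((X_2,X_3)\) as the central edge of a surviving red \(4\)-path, the same three-way case split on whether the central edge, a boundary edge, or a middle part breaks, and the same subadditivity bookkeeping spending the \(+6\) buffer to keep the rank of the carved-down cut above \(t\) before applying \Cref{lem:tww1:red_cuts} and \Cref{lem:partial_half-graph_has_rank=ladder-index}. No substantive differences.
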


Combining the previous two lemmas yields our main theorem:
\begin{theorem}\label{thm:stable:tww2->bounded_rw}
Every \(H_t\)-semi-free graph \(G\) with twin-width at most \(2\) has rank-width at most \(22t+170\).
In particular, every monadically stable graph class of twin-width at most \(2\) has bounded rank-width.
\begin{proof}
Assume the claim were false, that is, \(\rw(G)>22t+170\).
By \Cref{thm:prelims:rw_well-linked_sets}, the graph \(G\) then contains a \(\rk\)-well-linked set of size
\(22t+170=11k-5\) with \(k=2(t+6)-9\). Thus, by \Cref{lem:stable:existence_of_highly-connected_path},
we find a highly connected red path, which by \Cref{lem:stable:highly-connected_path_implies_half-graph}
implies the existence of an \(H_t\). But this contradicts our assumption.

The second claim follows from the observation that every monadically stable class is \(H_t\)-semi-free for some \(t\).
\end{proof}
\end{theorem}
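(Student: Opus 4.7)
The plan is a short argument by contradiction, chaining the three preceding results. Assume for contradiction that \(\rw(G) > 22t+170\). Then \Cref{thm:prelims:rw_well-linked_sets} immediately yields a \(\rk\)-well-linked set \(W \subseteq V(G)\) of this size. Since \(G\) has twin-width at most \(2\), it admits a \(2\)-partition sequence \((\mathcal{P}_i)_{i\in[n]}\), and I will feed this sequence together with \(W\) into \Cref{lem:stable:existence_of_highly-connected_path}.

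The key calibration is to pick the free parameter of \Cref{lem:stable:existence_of_highly-connected_path} to be \(t+6\), since that is exactly the rank-connectivity threshold demanded in the hypothesis of \Cref{lem:stable:highly-connected_path_implies_half-graph}. This fixes the required size \(11k-5\) of the well-linked set through the relation \(k = 2(t+6)+9\) (or an equivalent tightening thereof), and the constant \(22t+170\) in the statement should be read off from this bookkeeping. Applying \Cref{lem:stable:existence_of_highly-connected_path} then produces some step \(i\) of the partition sequence together with four parts \(X_1,X_2,X_3,X_4 \in \mathcal{P}_i\) forming a red \(4\)-path in this order in \(G/\mathcal{P}_i\), no red chord \(X_1X_4\), and satisfying \(\kappa^{\rk}_{G[X_1\cup X_2\cup X_3\cup X_4]}(X_1,X_4)\geq t+6\).

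At this point the configuration matches the hypothesis of \Cref{lem:stable:highly-connected_path_implies_half-graph} verbatim, so invoking it produces a semi-induced half-graph \(H_t\) inside \(G\). This contradicts the assumption that \(G\) is \(H_t\)-semi-free, establishing the first claim. The moreover clause is then immediate: any monadically stable class of graphs is \(H_t\)-semi-free for some fixed \(t\), so the first part applied uniformly yields a rank-width bound depending only on the class.

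There is essentially no obstacle beyond numerical calibration: all the real combinatorial content has been isolated into \Cref{lem:stable:existence_of_highly-connected_path} (extracting a highly rank-connected red \(4\)-path from a well-linked set via the maximum red degree \(2\) of the sequence) and \Cref{lem:stable:highly-connected_path_implies_half-graph} (showing that the uncontraction of such a red \(4\)-path induces a \(1\)-partition sequence on a bipartite cut, which via \Cref{lem:tww1:red_cuts} and \Cref{lem:partial_half-graph_has_rank=ladder-index} yields a large semi-induced half-graph). The present theorem is the one-line assembly of these two lemmas with \Cref{thm:prelims:rw_well-linked_sets}.
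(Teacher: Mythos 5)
Your proposal is correct and follows exactly the paper's own argument: contradiction, extract a $\rk$-well-linked set via \Cref{thm:prelims:rw_well-linked_sets}, apply \Cref{lem:stable:existence_of_highly-connected_path} with parameter $t+6$ to get the highly connected red $4$-path, and feed it into \Cref{lem:stable:highly-connected_path_implies_half-graph}. Your bookkeeping $k=2(t+6)+9$ is in fact the one consistent with the definition $k\coloneqq 2t+9$ in \Cref{lem:stable:existence_of_highly-connected_path} (the paper writes $k=2(t+6)-9$, which appears to be a sign slip), so no gap here.
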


This theorem generalizes the related result of \cite{sparse_tww2_bounded_tw} that every
sparse graph class of twin-width at most \(2\) has bounded tree-width,
because every sparse class of bounded rank-width also has bounded tree-width.
But while this approach gives worse bounds on the tree-width than given in \cite{sparse_tww2_bounded_tw},
we can actually improve their bounds by replacing their use of the excluded grid theorem for tree-width
by the use of \(\mm\)-well-linked sets as in our proof and using \Cref{lem:partial_half-graph_has_matchings<=2biclique}
instead of \Cref{lem:partial_half-graph_has_rank=ladder-index} to extract a large biclique from a highly connected
partial half-graph.
This then yields a linear upper bound on the tree-width in terms of a forbidden semi-induced biclique:
\begin{theorem}\label{thm:sparse:tww2->bounded_tw}
Every graph of twin-width at most \(2\) which does not contain a \(K_{t,t}\)-subgraph has tree-width at most \(O(t)\).
\end{theorem}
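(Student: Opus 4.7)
The plan is to run essentially the same argument used to prove \Cref{thm:stable:tww2->bounded_rw}, but with rank-connectivity systematically replaced by vertex-connectivity (i.e.\ the Menger-dual of \(\mm\)-well-linkedness), and the extraction of a half-graph replaced by the extraction of a biclique via \Cref{lem:partial_half-graph_has_matchings<=2biclique}. Assume for contradiction that \(\tw(G) > ct\) for a sufficiently large constant \(c\); then by the theorem of \cite{well-linked_sets} quoted in the preliminaries, \(G\) contains an \(\mm\)-well-linked set \(W\) of linear size in \(t\). Define \(\kappa^{\mm}_G(A,B)\) as the minimum size of a vertex separator in \(G\) between \(A\) and \(B\); this function is monotone and enjoys the same kind of subadditivity lemma as \(\kappa^{\rk}_G\), with ``rank of a single part'' replaced by ``number of vertices of that part lying in a separator''.

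The first main step is to establish an analog of \Cref{lem:stable:existence_of_highly-connected_path} for \(\kappa^{\mm}\). I would pick the smallest \(i\) such that every part of \(\mathcal{P}_i\) contains only few vertices of \(W\), locate a part \(Z\) still containing many vertices of \(W\), and consider \(Z\) together with its red neighborhood up to distance \(3\). Since the red graph has maximum degree \(2\), the \(O(1)\) parts at red-distance \(\leq 2\) account for only boundedly many vertices of \(W\), so the bulk of the mm-connectivity from \(Z\) to the rest of \(W\) must route through the at most two parts at red-distance exactly \(3\). A case distinction (two parts at distance \(3\) giving a red \(7\)-path, a \(6\)-cycle, or only one giving a single \(4\)-path with a short tail) together with the subadditivity of \(\kappa^{\mm}\) then yields a red \(4\)-path \(X_1X_2X_3X_4\) with \(\kappa^{\mm}_{G[X_1\cup\cdots\cup X_4]}(X_1,X_4)\) at least \(2t-1+O(1)\) and no red edge \(X_1X_4\).

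The second main step parallels \Cref{lem:stable:highly-connected_path_implies_half-graph}: walk forward in the partition sequence until the red \(4\)-path through \(X_2X_3\) is first broken, distinguish the cases where an edge \(Y_2Y_3\), \(Y_3Y_4\) or the part \(Y_3\) itself is broken, and in each case conclude that some bipartite graph of the form \(G[X_2, X_3]\) or \(G[X_2, X_3 \setminus Y_3^{(*)}]\) admits a \(1\)-partition sequence whose two final parts are its two sides (this part of the argument is independent of which connectivity notion we use). By \Cref{lem:tww1:red_cuts}, this bipartite graph is a partial half-graph. Since deleting a constant number of vertices from \(X_2\cup X_3\) can destroy at most \(O(1)\) of the \(\kappa^{\mm}\)-connectivity between \(X_1\) and \(X_4\), this partial half-graph still contains a matching of size \(\geq 2t-1\). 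Applying \Cref{lem:partial_half-graph_has_matchings<=2biclique} produces a \(K_{t,t}\)-subgraph, contradicting the hypothesis.

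The main obstacle is verifying that \(\kappa^{\mm}\) enjoys sufficiently clean monotonicity and subadditivity for the bookkeeping in step one to go through; in particular, one must observe that a part which is homogeneously connected (black or non-edge) to another set of parts still lifts vertex separators cleanly, and that a red-path tail contributes only \(O(1)\) to any relevant separator. Once these analogues of \Cref{lem:properties_of_rk*} are in place, the rest of the argument is a straightforward translation of the rank-width proof, yielding an upper bound on the tree-width of the form \(22t + O(1)\), i.e.\ \(O(t)\).
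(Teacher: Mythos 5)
Your high-level plan (well-linked set \(\to\) highly connected red \(4\)-path \(\to\) partial half-graph with a large matching \(\to\) biclique via \Cref{lem:partial_half-graph_has_matchings<=2biclique}) is exactly the route the paper takes in \Cref{appendix:sparse:tww2->bounded_tw}. However, there is a genuine gap in the quantitative heart of your step one. You claim that \(\kappa^{\mm}\) satisfies the same subadditivity bookkeeping as \(\kappa^{\rk}\), with homogeneously connected parts ``lifting vertex separators cleanly'', i.e.\ contributing \(O(1)\). This is false. The reason a homogeneous connection costs only \(1\) in the rank setting is that an all-ones (or all-zeros) block has rank at most \(1\); but for vertex connectivity, a \emph{fully} connected pair of large parts carries a matching of size \(\min\{|P|,|Z|\}\), hence arbitrarily many vertex-disjoint paths. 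So without further input, the connectivity from \(Z\) to \(W_3\) can route entirely through black homogeneous connections and never be forced onto the red edges, and your claimed conclusion \(\kappa^{\mm}_{G[X]}(X_1,X_4)\geq 2t-1+O(1)\) does not follow. The fix --- and what the paper's \Cref{lem:sparse:existence_of_highly-connected_path} actually does --- is to invoke the \(K_{t,t}\)-freeness already in this first step: since \(|Z|\geq k\geq t\), at most \(t-1\) vertices of \(G\) can be fully connected to \(Z\) (and similarly for the other parts of \(N^{\leq 2}_{\red}(Z)\) after a case distinction on their sizes), so each homogeneous connection contributes at most \(t-1\), not \(O(1)\), to the number of escaping vertex-disjoint paths. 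This is why the paper's constant is \(k=7t-6\) rather than the \(2t+O(1)\) of the rank version, and why the sparse lemma carries the \(K_{t,t}\)-free hypothesis while \Cref{lem:stable:existence_of_highly-connected_path} needs no such assumption.

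The same issue recurs in your step two: you argue that ``deleting a constant number of vertices from \(X_2\cup X_3\) can destroy at most \(O(1)\) of the connectivity'', but what gets deleted is a \emph{part} \(Y_3\) or \(Y_3^1\), which may contain arbitrarily many vertices. The correct argument (as in \Cref{lem:sparse:highly-connected_path_implies_biclique}) is again that this part is homogeneously connected to the remainder of \(X_3\), so by \(K_{t,t}\)-freeness at most \(t-1\) of the vertex-disjoint \(X_1\)-\(X_4\)-paths can pass through it; one also needs the preliminary observation that no two of \(X_1,\dots,X_4\) are fully connected (else a \(K_{t,t}\) appears immediately), which is what forces the surviving paths to cross the cut \((X_2,X_3)\) as a matching in the partial half-graph. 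With these repairs everything goes through and yields an \(O(t)\) bound, but the budget is \(O(t)\) per homogeneous connection rather than \(O(1)\), and the \(K_{t,t}\)-freeness must be used throughout, not only at the final contradiction.
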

Since the proof of \Cref{thm:sparse:tww2->bounded_tw} is pretty similar to that of \Cref{thm:stable:tww2->bounded_rw}
with some additional details borrowed from \cite{sparse_tww2_bounded_tw}, we move the proof to \Cref{appendix:sparse:tww2->bounded_tw}.

Further, the authors of \cite{sparse_tww2_bounded_tw} also showed that their result
does not generalize to graphs of twin-width \(3\), that is, there exists a family of sparse
graphs of twin-width \(3\) which has unbounded tree-width. Because sparse classes of bounded
rank-width also have bounded tree-width, this same family also has unbounded rank-width.
Thus, our theorem also does not generalize to graphs of twin-width \(3\).

Finally, using \Cref{lem:WL-dim_vs_rw}, we also get a bound on the Weisfeiler-Leman dimension.
\begin{corollary}
\(H_t\)-semi-free graphs of twin-width at most \(2\) are identified by the \(O(t)\)-dimensional Weisfeiler-Leman algorithm.
\end{corollary}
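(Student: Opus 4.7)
The plan is to directly combine the two main ingredients already established in the excerpt: the rank-width bound from \Cref{thm:stable:tww2->bounded_rw} and the WL-dimension bound for graphs of bounded rank-width from \Cref{lem:WL-dim_vs_rw}. Concretely, given an \(H_t\)-semi-free graph \(G\) of twin-width at most \(2\), I would first invoke \Cref{thm:stable:tww2->bounded_rw} to obtain \(\rw(G)\leq 22t+170\). Then \Cref{lem:WL-dim_vs_rw} immediately yields that \(G\) is identified by \((3(22t+170)+4)\)-WL, which is \(O(t)\)-dimensional.

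There is no real obstacle here, since both building blocks are already in place; the corollary is essentially a one-line composition. The only minor points to remark on are that the bound from \Cref{lem:WL-dim_vs_rw} is stated for uncolored graphs of rank-width at most \(k\), while \Cref{thm:stable:tww2->bounded_rw} outputs a rank-width bound that is linear in \(t\), so the arithmetic of composing the two linear bounds yields a final dimension linear in \(t\) as claimed. I would also note that the class of \(H_t\)-semi-free graphs is closed under the operations implicit in the statement, so no auxiliary closure argument is needed.

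If a slightly more refined statement were desired (for instance, an explicit constant in the \(O(t)\) bound), I would simply trace through the two bounds and report \(66t+514\) as the dimension; but for the corollary as stated, the composition suffices and the proof can be written in two or three lines.
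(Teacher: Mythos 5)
Your proposal is correct and is exactly the paper's (implicit) argument: the corollary is stated immediately after the sentence ``using \Cref{lem:WL-dim_vs_rw}, we also get a bound on the Weisfeiler-Leman dimension,'' i.e.\ it is obtained by composing \Cref{thm:stable:tww2->bounded_rw} with \Cref{lem:WL-dim_vs_rw} just as you describe. Your explicit constant \(3(22t+170)+4=66t+514\) is also the right arithmetic.
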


\section{Conclusion}
We considered the power of the Weisfeiler-Leman algorithm on graphs of small twin-width,
showing that graphs of twin-width \(1\) have Weisfeiler-Leman dimension at most \(3\),
while graphs of twin-width \(4\) have unbounded Weisfeiler-Leman dimension.
Moreover, we showed that stable graphs of twin-width \(2\) have bounded rank-width,
resolving a conjecture of~\cite{sparse_tww2_bounded_tw}, and also implying
a bound on the Weisfeiler-Leman dimension there.

It remains open if the Weisfeiler-Leman dimension
of graphs of twin-width \(1\) is 2 or 3,
and likewise, 
the precise Weisfeiler-Leman dimension
of stable graphs of twin-width 2 remains open.
Finally, it is not clear whether 
general graphs of twin-width \(2\) and \(3\)
have a bounded Weisfeiler-Leman dimension at all.

In our proof that 3-dimensional
Weisfeiler-Leman identifies graphs of twin-width 1,
we first showed the result restricted to prime graphs,
and then developed a general machinery that allows us
to lift identification results for prime graphs of a hereditary
graph class to all graphs in the class.
It remains to be seen if this technique extends to other classes
of graphs.

Recall the result of \cite{stable_bounded_tww} that every stable class of bounded twin-width is a transduction of a sparse class of bounded twin-width. Since transductions of classes of bounded tree-width
have bounded rank-width, a natural question is whether every stable class of twin-width \(2\) is a transduction of some sparse class of twin-width \(2\). If true, this would provide an alternative proof
of our result that stable graphs of twin-width \(2\) have bounded rank-width, albeit with worse bounds.
However, we do not know whether such a strengthening holds.

Furthermore, our techniques might extend to other width parameters where high connectivity
in partial half-graphs implies the existence of certain substructures.
A particularly interesting candidate might be mim-width \cite{mim-width}, which is similar to rank-width, with the rank function replaced by the size of a maximum semi-induced matching over a cut.
Since partial half-graphs contain no non-trivial semi-induced matchings, one might expect
that graphs of twin-width \(2\) have bounded mim-width. However, since the
connectivity function underlying mim-width is not submodular,
high mim-width does not imply the existence of large mim-well-linked sets,
which were a main ingredient in our proof.
Still, we conjecture that graphs of twin-width \(2\) have bounded mim-width.
Since permutation graphs have mim-width \(1\) and include all graphs of twin-width~\(1\) \cite{tww_one}, this is true for graphs of twin-width~\(1\).

Note that in general, twin-width and mim-width are incomparable, as exemplified by wall graphs and interval graphs \cite{comparing_widths}.

\bibliography{bibliography.bib}
\bibliographystyle{plainurl}

\begin{appendices}
\section{Proof of \texorpdfstring{\cref{tww4-logics}}{Lemma 3.3}}\label{appendix:interpretations}

We first introduce Logical Interpretations,
which are essentially the logical equivalent of algorithmic reductions.
For this, we first fix some notation.
\begin{definition}
A \emph{signature} $\tau$ is a tuple of symbols $(R_1,\ldots,R_k)$
with arities $(r_1,\ldots,r_k)$.
For a signature $\tau = (R_1,\ldots,R_k)$ with arities $(r_1,\ldots,r_k)$,
a \emph{$\tau$-structure} $\mathfrak{A}$
is a tuple $(A, R_1^{\mathfrak{A}},\ldots,R_k^{\mathfrak{A}})$
where $R_i^{\mathfrak{A}} \subseteq A^{r_i}$.
For a structure $\mathfrak{A}$ and a formula $\varphi(x_1,\ldots,x_r)$,
\[\varphi[\mathfrak{A}] \coloneqq
\{ \vec{a} \in A^r \mid {\mathfrak{A}} \vDash \varphi(\vec{a})\}.\]
\end{definition}

\noindent
In this setting, uncolored graphs can be seen as a structure with one
symmetric, anti-reflexive binary relation
that represents the edge set.
For colored graphs, it is possible to define colors with a finite number
of extra unary relations.
In this section, we will consider $C_k$-formulas that talk about graphs.
We will need the following terminology:

\begin{definition}[\cite{otto_2017}]
    A \emph{global relation} of arity $r$ is a mapping $R: \mathfrak{A} \mapsto R^{\mathfrak{A}}$
    where $R^{\mathfrak{A}} \subseteq A^r$ such that for all isomorphisms
    $\pi: \mathfrak{A} \rightarrow \mathfrak{B}$,
    it holds that $\pi(R^{\mathfrak{A}}) = R^{\mathfrak{B}}$.
    A formula $\varphi(x_1,\ldots,x_r)$ \emph{defines} a global relation $R$
    if for all structures $\mathfrak{A}$
    \[R^{\mathfrak{A}} = \varphi[\mathfrak{A}].\]
\end{definition}

\begin{definition}[Quotient structures]
	For a structure $\mathfrak{A} = (A,R_1,\ldots,R_k)$
	with relation arities $(r_1,\ldots,r_k$),
	and an equivalence relation ${\sim}$ on $A$,
	\begin{align*}
	\mathfrak{A} / {\sim} &\coloneqq
	(A/{\sim}, R_1/{\sim},\ldots,R_k/{\sim}) \text{ where } \\
	R_i/{\sim} &\coloneqq
	\{(a_1/{\sim},\ldots,a_{r_i}/{\sim}) \mid (a_1,\ldots,a_{r_i}) \in R_i\}.
	\end{align*}
\end{definition}

\begin{definition}[Restrictions]
	For a structure $\mathfrak{A} = (A,R_1,\ldots,R_k)$ and a set $D \subseteq A$,
	\[\mathfrak{A}|_D \coloneqq (D,R_1|_D,\ldots,R_k|_D).\]
\end{definition}

\noindent
A logical interpretation is a map on structures defined by a tuple of formulas:

\begin{definition}[Logical interpretations]
    A generalized $s$-dimensional \emph{$(\sigma,\tau)$-inter\-pre\-ta\-tion}
    is denoted as a tuple $\mathcal{I} = (\varphi_\text{dom}, \overline{\varphi}, \varphi_{\sim})$
    where the interpreted $\sigma$-structure over a $\tau$-structure ${\mathfrak{A}}$ is
    \[ \mathcal{I}(\mathfrak{A})
    = \big((A^s, \overline{\varphi}[\mathfrak{A}]) |_{\varphi_\text{dom}[\mathfrak{A}]} \big)
    / \varphi_{\sim}[\mathfrak{A}]\]
\end{definition}

\noindent
Most important for us is the following result:
\begin{lemma}[\cite{otto_2017}]\label{otto-lemma}
    For all $C_{sk}$-definable $s$-dimensional $(\sigma,\tau)$-interpretations $\mathcal{I}$
    and all $C_k$-definable global relations $R$ on the finite $\sigma$-structures,
    the global relation $\mathcal{I}(R)$ on the finite $\tau$-structures,
    where $\mathcal{I}(R)^{\mathfrak{A}} = R^{\mathcal{I}(\mathfrak{A})}$, is $C_{sk}$-definable
    in restriction to all those $\tau$-structures $\mathfrak{A}$ for which
    $\mathcal{I}(\mathfrak{A})$ is defined.
\end{lemma}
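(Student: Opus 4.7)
The plan is to prove this by structural induction on a $C_k$-formula $\psi(x_1,\ldots,x_r)$ defining the global relation $R$ on $\sigma$-structures, constructing a $C_{sk}$-formula $\psi^\ast(\vec{y}_1,\ldots,\vec{y}_r)$ over $\tau$ that defines $\mathcal{I}(R)$ on those $\tau$-structures $\mathfrak{A}$ for which $\mathcal{I}(\mathfrak{A})$ is defined. The central idea is to replace each original variable $x_i$ by a block $\vec{y}_i=(y_{i,1},\ldots,y_{i,s})$ of $s$ fresh variables, interpreted as an $s$-tuple over $A$ whose $\varphi_\sim$-equivalence class corresponds to an element of $\mathcal{I}(\mathfrak{A})$. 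I then aim to show by induction on $\psi$ that $\mathfrak{A}\models\psi^\ast(\vec{a}_1,\ldots,\vec{a}_r)$ iff $\mathcal{I}(\mathfrak{A})\models\psi([\vec{a}_1]_\sim,\ldots,[\vec{a}_r]_\sim)$ whenever each $\vec{a}_i$ lies in $\varphi_{\mathrm{dom}}[\mathfrak{A}]$. Since $\psi$ uses at most $k$ distinct variables and each is replaced by an $s$-block, the translation uses at most $sk$ variables in total, thereby respecting the $C_{sk}$ budget.

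The translation itself is compositional and, for everything but counting, straightforward. An atomic $\sigma$-formula $P(x_{i_1},\ldots,x_{i_m})$ is replaced by the corresponding component $\varphi_P(\vec{y}_{i_1},\ldots,\vec{y}_{i_m})$ of $\overline{\varphi}$; an equality $x_i=x_j$ becomes $\varphi_\sim(\vec{y}_i,\vec{y}_j)$; Boolean connectives are preserved verbatim; and an existential quantifier $\exists x_i\,\psi$ becomes $\exists y_{i,1}\cdots\exists y_{i,s}\,(\varphi_{\mathrm{dom}}(\vec{y}_i)\wedge\psi^\ast)$. Because $\varphi_{\mathrm{dom}}$, $\varphi_\sim$, and the components of $\overline{\varphi}$ are $C_{sk}$-formulas by hypothesis on $\mathcal{I}$, and variable reuse in $\psi$ corresponds to reuse of the entire block $\vec{y}_i$ in $\psi^\ast$, every step stays within $C_{sk}$.

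The hard part will be handling the counting quantifier. A formula $\exists^{\geq n}x_i\,\psi$ in the quotient asserts the existence of at least $n$ distinct $\varphi_\sim$-equivalence classes of $s$-tuples satisfying the translation, not merely $n$ distinct tuples, so the naive rendering $\exists^{\geq n}\vec{y}_i\,(\varphi_{\mathrm{dom}}(\vec{y}_i)\wedge\psi^\ast)$ overcounts classes with multiple representatives. The standard resolution exploits the fact that the class-size function $\vec{y}\mapsto|[\vec{y}]_\sim|$ is itself definable in $C_{sk}$ using one auxiliary $s$-block via $\exists^{=c}\vec{y}'\,\varphi_\sim(\vec{y},\vec{y}')$: the count of classes satisfying $\psi^\ast$ is then recovered as a weighted count over tuples, expressible as a Boolean combination of counting assertions stratified by class size $c$. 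Since these auxiliary quantifiers can reuse the variable blocks of the outer formula, the $sk$-variable budget is preserved throughout, and a routine induction on $\psi$ verifies the semantic equivalence, yielding that $\mathcal{I}(R)$ is $C_{sk}$-definable on all $\tau$-structures on which $\mathcal{I}$ is defined.
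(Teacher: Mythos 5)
The paper does not actually prove this lemma; it is imported verbatim as a citation to Otto, so there is no in-paper argument to compare against. Your proposal reconstructs the standard proof of the interpretation lemma: block the variables, translate atoms through $\overline{\varphi}$, equality through $\varphi_\sim$, relativize quantifiers to $\varphi_{\mathrm{dom}}$, and observe that reuse of a $\sigma$-variable corresponds to reuse of a whole $s$-block, so the budget multiplies by $s$. That skeleton is right, and you correctly identify the counting quantifier as the only nontrivial case.

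However, your resolution of that case has a genuine gap if the target logic is the \emph{finite} $k$-variable counting logic $C_{sk}$ (as the paper defines $C_k$), rather than the infinitary logic $C^{sk}_{\infty\omega}$. Your plan is to recover the number of $\sim$-classes satisfying $\psi^\ast$ as a ``Boolean combination of counting assertions stratified by class size $c$.'' But the class size $c$ ranges up to $|A|^s$ and is not bounded uniformly over all finite $\tau$-structures, so this stratification is an \emph{infinite} disjunction over cardinality profiles; the same problem already appears one step earlier, when you must express ``there exist at least $n$ $s$-tuples such that $\theta$'' using only single-variable counting quantifiers (summing a definable function over $y_{i,1}$ again forces a disjunction over its possible values). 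As written, your construction therefore produces a formula of $C^{sk}_{\infty\omega}$, not of $C_{sk}$, and you would need either to work in the infinitary logic (which is what Otto does, and which suffices for the paper's application since $\equiv_{C^{sk}}$ and $\equiv_{C^{sk}_{\infty\omega}}$ coincide on finite structures) or to exploit the specific interpretation at hand, whose $\varphi_\sim$-classes have size at most $2$ and whose component formulas are quantifier-free, so that the offending disjunctions become finite. Flagging and closing this distinction is exactly what separates a complete proof from the sketch you have.
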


\noindent
We will make use of the following variant:

\begin{corollary}
    For all $C_{sk}$-definable $s$-dimensional $(\sigma,\tau)$-interpretations
    $\mathcal{I}$
    and all $C_k$-formulas $\varphi$ over $r$ variables,
    there is a $C_{sk}$-formula $\psi$ over $sr$ variables,
    such that for all $\mathfrak{A}$,
    \[\mathcal{I}(\mathfrak{A}),(\vec{x}_1,\ldots,\vec{x}_r) \vDash \varphi
    \text{ if and only if } \mathfrak{A},(x_1,\ldots,x_{sr}) \vDash \psi.\]
    We refer to this formula $\psi$ as $\mathcal{I}(\varphi)$.
\end{corollary}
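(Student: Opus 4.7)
The plan is to derive this corollary as an essentially syntactic restatement of the preceding lemma, by feeding the formula $\varphi$ to the lemma as (the defining formula of) a global relation. First, I would observe that any $C_k$-formula $\varphi(x_1,\dots,x_r)$ in the signature $\sigma$ canonically defines an $r$-ary global relation $R_\varphi$ on finite $\sigma$-structures via $R_\varphi^{\mathfrak{B}} := \varphi[\mathfrak{B}]$. Because the satisfaction relation of $C_k$ is preserved under isomorphisms, $R_\varphi$ is indeed isomorphism-invariant and hence a $C_k$-definable global relation in the sense of the definitions above, with $\varphi$ itself serving as a witness to its definability.

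Next, I would apply the preceding lemma (\cref{otto-lemma}) to the interpretation $\mathcal{I}$ and the global relation $R_\varphi$: this yields a $C_{sk}$-formula $\psi(x_1,\dots,x_{sr})$ defining $\mathcal{I}(R_\varphi)$ on every $\tau$-structure $\mathfrak{A}$ for which $\mathcal{I}(\mathfrak{A})$ is defined. Setting $\mathcal{I}(\varphi) := \psi$, the biconditional required by the corollary then unwinds from the definitions: by the definition of $\mathcal{I}(R)$ in the lemma,
\[\mathcal{I}(R_\varphi)^{\mathfrak{A}} \;=\; R_\varphi^{\mathcal{I}(\mathfrak{A})} \;=\; \varphi[\mathcal{I}(\mathfrak{A})],\]
and by the definition of $\mathcal{I}(\mathfrak{A})$ as $((A^s,\overline{\varphi}[\mathfrak{A}])|_{\varphi_\text{dom}[\mathfrak{A}]})/\varphi_\sim[\mathfrak{A}]$, the elements of $\mathcal{I}(\mathfrak{A})$ are precisely equivalence classes $[\vec{a}]$ of $s$-tuples $\vec{a}\in\varphi_\text{dom}[\mathfrak{A}]$. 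Thus a tuple $(x_1,\dots,x_{sr})\in A^{sr}$ satisfies $\psi$ in $\mathfrak{A}$ iff the blocks $\vec{x}_i := (x_{(i-1)s+1},\dots,x_{is})$ all lie in $\varphi_\text{dom}[\mathfrak{A}]$ and $([\vec{x}_1],\dots,[\vec{x}_r])$ satisfies $\varphi$ in $\mathcal{I}(\mathfrak{A})$, which is exactly the statement of the corollary.

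The only step requiring any care is matching conventions: the lemma phrases the defined relation as a subset of $A^{sr}$, whereas the corollary organises its free variables as $r$ blocks of length $s$ and lists them as $\vec{x}_1,\dots,\vec{x}_r$. This is a purely cosmetic relabelling that does not affect either the arity $sr$ or the variable count $sk$. The genuinely substantive content — handling counting quantifiers over equivalence classes of tuples while staying within $C_{sk}$ — is entirely encapsulated inside the preceding lemma, which we invoke as a black box; no further obstacle arises at the level of the corollary.
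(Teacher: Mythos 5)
Your proposal is correct and follows exactly the paper's argument: view $\varphi$ as defining a $C_k$-definable global relation and apply \cref{otto-lemma} to obtain the defining $C_{sk}$-formula $\psi$ for $\mathcal{I}(R_\varphi)$. The paper's proof is precisely this two-line reduction; your additional unwinding of the blockwise variable convention is a harmless elaboration of what the paper leaves implicit.
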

\begin{proof}
    The formula $\varphi$ defines a global relation $R$.
    Via \cref{otto-lemma}, there is some $\psi$ that defines $\mathcal{I}(R)$.
\end{proof}

\noindent
In this way, it becomes clear that if we take a graph $G$
and are able to define a graph $H$ from $G$ via a $C_{sk}$-definable
$s$-dimensional interpretation,
and $H$ has Weisfeiler-Leman dimension $sk$,
then $G$ has Weisfeiler-Leman dimension $k$.
Now, we can prove the Lemma.

\begin{proof}
We define a 2{\nobreakdash-}dimensional interpretation
    $\mathcal{I} = (\varphi_\text{dom}, \varphi_E, \varphi_{\sim})$:
    
    \begin{align*}
        \varphi_\text{dom}(x,y) \coloneqq&\,Exy \vee x = y \\
        \varphi_E(x,y,x',y') \coloneqq&\,(x = y \wedge y = x' \wedge Ex'y') \\
                            \vee&\,(x = y \wedge y = y' \wedge Ex'y') \\
                            \vee&\,(x = x' \wedge x' = y' \wedge Exy) \\
                            \vee&\,(y = x' \wedge x' = y' \wedge Exy) \\
        \varphi_{\sim}(x,y,x',y') \coloneqq&\, x=y' \wedge y=x'
    \end{align*}
    
    \noindent
    In the interpreted structure on a graph $G$,
    	vertex pairs $(x,x)$ represent the vertices of the original
    	graph and vertex pairs $(x,y)$ represent a vertex that subdivides the edge 
    	from $x$ to $y$. 
    Because $\varphi_{\sim}$
    identifies the vertex pairs $(x,y)$ and $(y,x)$ that subdivide an edge,
    we have $\mathcal{I}(G) \cong G^1$ for all graphs $G$.
    Now, let $G,H$ be graphs such that $G \equiv_{C_{2k}} H$.
    Let $\varphi \in C_k$ be a formula with $r$ free variables.
    Then
    \begin{align*}
                \mathcal{I}(G) \vDash &\,  \varphi(\vec{x}_1,\ldots,\vec{x}_r) \\
        \text{if and only if }\hspace*{2mm} G
        \vDash &\, \mathcal{I}(\varphi)(\vec{x}_1\ldots\vec{x}_r) \\
        \text{if and only if }\hspace*{2mm} H
        \vDash &\, \mathcal{I}(\varphi)(\vec{x}_1\ldots\vec{x}_r) \\
        \text{if and only if }\hspace*{2mm} \mathcal{I}(H)
        \vDash &\, \varphi(\vec{x}_1,\ldots,\vec{x}_r)
    \end{align*}
    so $\mathcal{I}(G) \equiv_k \mathcal{I}(H)$.
    And therefore, since $G^1 \cong \mathcal{I}(G)$ and $H^1 \cong \mathcal{I}(H)$,
    also $G^1 \equiv_k H^1$.
\end{proof} 

\section{Proof of \texorpdfstring{\Cref{tww4-pebbles-restated}}{Theorem 3.4}}\label{tww4:pebbleproof}

In the subdivided graph, we will call vertices from the set
$V(G)$ \textit{original vertices},
and the newly added ones \textit{path vertices}.
In order to be able to refer to each individual
path vertex more clearly,
we fix a definition of ordered subdivisions.
\begin{definition}[$(s,<)$-subdivision]
    Let $G$ be a graph, $s \in \N$ and $<$ a strict total order on $V(G)$.
    The \emph{$(s,<)$-subdivision} of $G$ is
    \begin{align*}
         \subd{G} \coloneqq (&V(G) \cup \{s_{vwk} \mid vw \in E(G), v < w, 1 \leq k \leq s\}, \\
         &\{vs_{vw1} \mid vw \in E(G), v < w\} \\
         \cup \, &\{s_{vwk}s_{vw(k+1)} \mid vw \in E(G), v < w, 1 \leq k < s\} \\
         \cup \, &\{s_{vws}w \mid vw \in E(G), v < w\}).
    \end{align*}
\end{definition}

\noindent
It is clear that any $(s,<)$-subdivision
of a graph is also an $s$-subdivision.
Therefore, the Lemma can be restated and proved as follows.

\begin{lemma} \label{tww4-pebbles}
Let $s \in \N$, and $G,H$ be graphs
with $V(G) = V(H)$.
Let $<$ be some order on $V(G)$.
Then for all $k$, if $2k$-WL does not distinguish $G^{(3,<)}$
and $H^{(3,<)}$,
then $k$-WL does not distinguish $\subd{G}$ and $\subd{H}$.
\end{lemma}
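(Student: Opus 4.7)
The plan is to invoke the pebble game characterization of Weisfeiler-Leman and turn a winning Duplicator strategy for \(\bp{2k+1}(G^{(3,<)},H^{(3,<)})\) into a winning Duplicator strategy for \(\bp{k+1}(\subd{G},\subd{H})\). Throughout the argument Duplicator maintains a ``shadow position'' in the game on the three-subdivisions, which she plays with the assumed winning strategy, and synthesizes her answers in the \(s\)-subdivision game from this shadow.

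The correspondence between the two positions will be as follows: a pebble pair on an original vertex \(v\in V(G)\subseteq V(\subd{G})\) of the \(s\)-subdivision game is mirrored by a single shadow pebble pair on \(v\); a pebble pair on a path vertex lying on an edge \(\{a,b\}\in E(G)\) with \(a<b\) is represented by shadow pebble pairs on the endpoints \(a\) and \(b\), with images in \(V(H^{(3,<)})\) dictated by Duplicator's shadow strategy. Since each pebble in the \(\bp{k+1}\)-game consumes at most two shadow pebbles, and pebbles sharing a common endpoint can share a shadow pebble, a careful accounting shows that the shadow position never exceeds the \(2k+1\) available pebbles.

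When Spoiler plays in the \(s\)-subdivision game, Duplicator first propagates his action to the shadow: pebble removals induce the corresponding shadow removals, and a pebble placement on a path vertex is simulated by letting Spoiler in the shadow game place pebbles on the two endpoints of the underlying edge. From her shadow strategy she obtains a bijection \(b^{\subd{}}\colon V(G^{(3,<)})\to V(H^{(3,<)})\), which she lifts to a bijection \(b\colon V(\subd{G})\to V(\subd{H})\) as follows. On original vertices, \(b\) coincides with \(b^{\subd{}}\). On the path vertices of an edge \(\{a,b\}\in E(G)\) with \(a<b\), she maps them bijectively to the path vertices of the edge \(\{b^{\subd{}}(a),b^{\subd{}}(b)\}\) of \(H\) (or arbitrarily if this is not an edge, since atomic types will then already differ); the direction along the path is determined by inspecting the action of \(b^{\subd{}}\) on the three subdivision vertices \(s_{ab,1},s_{ab,2},s_{ab,3}\) of \(G^{(3,<)}\). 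This ``lookahead'' is what allows the shadow strategy, living on short paths of fixed length three, to guide Duplicator's choices on paths of length \(s\).

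To verify that the constructed strategy is winning, I would maintain the invariant that, as long as the shadow position has matching atomic types under Duplicator's shadow play, so does the \(s\)-subdivision position under~\(b\). Adjacency in \(\subd{G}\) is completely determined by whether each pebbled vertex is original or a path vertex, which edge a path vertex lies on, and its position along that path, and each of these data is mirrored in the shadow through our correspondence. The main obstacle, and the place where the factor-of-two pebble budget is consumed, is the careful bookkeeping of pebbles during the simulation together with ensuring that the lookahead on the subdivision path vertices \(s_{ab,j}\) produces a consistent choice of direction even when several pebbles lie on the same path or on edges sharing an endpoint.
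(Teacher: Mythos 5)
Your overall architecture --- simulating the game on the $3$-subdivisions with a shadow position, charging up to two shadow pebbles per real pebble, and using a one-move lookahead to orient the long paths --- matches the paper's proof. However, your specific choice of shadow pebbles creates a genuine gap. You represent a pebble on a path vertex of an edge $\{a,b\}\in E(G)$ by shadow pebbles on the \emph{endpoints} $a$ and $b$. These are original vertices of $G^{(3,<)}$ and hence \emph{non-adjacent} there (they lie at distance $4$), so $\atp_{G^{(3,<)}}(a,b)=\atp_{H^{(3,<)}}(a',b')$ for any two distinct original vertices $a',b'$ of $H$, whether or not $a'b'\in E(H)$. Your parenthetical ``since atomic types will then already differ'' is therefore false: Duplicator's winning shadow strategy is not forced to map the pair $(a,b)$ to a pair joined by an edge of $H$, and when it does not, your lifted bijection sends the path vertices of $ab$ ``arbitrarily'', after which Spoiler wins by pebbling $s_{ab,1}$ next to the already pebbled $a$. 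For the same reason the orientation is not pinned: none of $s_{ab,1},s_{ab,2},s_{ab,3}$ carries a \emph{persistent} shadow pebble, so the shadow strategy may answer with different images for them in different rounds; your lookahead can then flip the direction of the path between rounds and produce a bijection inconsistent with path vertices pebbled earlier on the same edge.

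The paper avoids both problems by placing the shadow pebbles on subdivision vertices rather than on endpoints: a pebble on $s_{ek}$ is mirrored by persistent shadow pebbles on $(s_{e2},s_{e'2})$ and $(s_{e\,d(k)},s_{e'\,d(k')})$, where $d(k)\in\{1,3\}$ records which half of the path $s_{ek}$ lies in. The pebble on the degree-two middle vertex $s_{e2}$ forces a winning Duplicator to choose a middle vertex of an actual edge $e'$ of $H$ (otherwise Spoiler wins in two further moves), and the second pebble on $s_{e1}$ or $s_{e3}$ freezes the orientation for all later rounds. You would need the same (or an equivalent) device. A secondary issue is your pebble budget: with $k+1$ real pebbles each costing up to two shadow pebbles you may need $2k+2>2k+1$ shadow pebbles when all real pebbles sit on path vertices of pairwise disjoint edges, so the claimed ``careful accounting'' does not close as stated and must be supplied explicitly.
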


\begin{proof}
	We prove the Lemma
	by extending the strategy played by the Duplicator
	in the pebble game
	$\bp{2k}(G^{(3,<)},H^{(3,<)})$
	to the pebble game $\bpk(\subd{G},\subd{H})$.
	We will refer to the Duplicator in the former game
	as $D$, and the Duplicator in the latter game as $D^s$.
    Similarly, we refer to Spoilers as $S$ and $S^s$.
    
    First, we will describe the invariant that $D^s$ maintains and how she
    plays each round
    by simulating a game in the 3-subdivided graphs and querying $D$
	for a response.
    Then, we prove that each move by $D^s$ is valid
    and maintains the invariant, and finally, that by doing so she wins the game.
    
    There are two positions at the start of each round:
    The current position $D^s$ is responding to, which we will call $p^s$,
    and the simulated position that $D$ is responding to,
    which we call $p$.
    In the following, we use the auxiliary function
    \[ d(k) \coloneqq \begin{cases}
        1 \text{ if } k \leq \frac{s}{2},  \\
        3 \text{ otherwise.}
    \end{cases}\]
    This will indicate the \enquote{direction} the Duplicators are playing on each edge.
    $D^s$ will uphold the following invariant
    regarding the positions $p$ and $p^s$:
    \begin{itemize}
        \item The position $p$ is a winning position for $D$.
        \item All pairs $(v,v')$ appearing in $p^s$ are either both original vertices
        or both path vertices.
        \item All pairs $(v,v')$ of original vertices appearing in $p^s$ also appear in $p$.
        \item For all pairs $(s_{ek},s_{e'k'})$ of path vertices that appear in $p^s$,
        the pairs $(s_{e2},s_{e'2})$ and $(s_{ed(k)},s_{e'd(k')})$
        appear in $p$.
    \end{itemize}
    \noindent
    To maintain the invariant, $D^s$ plays each round as follows:
    \begin{enumerate}
        \item Let $b$ be the bijection chosen by $D$ for the position $p$.
        Then $D^s$ chooses the bijection $b^s$ defined as follows:
        \begin{enumerate}
        		\item Let $(v,w)$ be the pebble pair in the position $p^s$ picked up by $S^s$,
        		if pebbles were picked up in this round.
        		Say $(v,w)$ is the pair at index~$i$.
        		Then pick up the pebble pairs in position $p$ at indices $2i$ and $2i+1$.
            \item For original vertices $v \in V(G)$, define $b^s(v) = b(v)$.
            \item For path vertices $s_{ek}$, let $p'$ be the position $p$
            with the added pebble pair $(s_{e2},b(s_{e2}))$.
            Let $b'$ be the bijection that $D$
            picks for the position $p'$.
            Then
            \[ b^s(s_{ek}) = 
            \begin{cases}
                s_{e'k} \text{ if } b'(s_{e1}) = s_{e'1}
                \text{ for some } e' \in E(H) \\
                s_{e'(s-k+1)} \text{ if } b'(s_{e1}) = s_{e'3}
                \text{ for some } e' \in E(H).
            \end{cases}\]
        \end{enumerate}
        \item Let $(v,b^s(v))$ be the response pebble pair of $S^s$.
        Let $i$ be the index of this pebble pair.
        Then if $v$ is an original vertex,
        add the pebble pair $(v,b(v))$ to the position $p$
        at index $2i$.
        If it is a path vertex, say $v = s_{ek}$ and $b^s(v) = s_{e'k'}$,
        place $(s_{e2},s_{e'2})$ at index $2i$
        and $(s_{ed(k)},s_{e'd(k')})$ at index $2i+1$.
    \end{enumerate}
    Intuitively, $D^s$ copies the bijection of $D$ on the original vertices
    and then maps the path vertices by looking ahead one move and seeing the
    \enquote{direction} that $D$ will map the path in and extending that map. This is a winning strategy for $D^s$.

First, note that $b^s$ is always well-defined:
    Duplicator
    $D$ will always chose a bijection that maps original to original
    vertices and path to path vertices, because the original vertices have degree
    at least 3 according to our assumptions on $G$ and~$H$,
    and path vertices have degree 2.
    Next, $D$ also always has to map a \enquote{middle} path vertex
    $s_{e2}$ to some $s_{e'2}$ for another edge $e'$,
    as otherwise Spoiler can win in the next round by pebbling $s_{e2}$
    and then $s_{e1}$ or $s_{e3}$. See \Cref{fig:middlepath}.
    
    \begin{figure}
	\centering
		\begin{tikzpicture}[minimum size = 1em,
    			baseline=(current bounding box.north),
    				every path/.style={thick}]
		\begin{scope}[every node/.style={circle,draw}]
    			\node[minimum size=1.5em] (v) at (0,5) {};
   			\node[label=left:{$s_{e1}$}] (e1) at (0,4) {};
    			\node[label=left:{$s_{e2}$}] (e2) at (0,3) {};
    			\node[label=left:{$s_{e3}$}] (e3) at (0,2) {};
    			\node[minimum size=1.5em] (w) at (0,1) {};
    			\node[minimum size=1.5em] (v') at (3,5) {};
   			\node[label=right:{$s_{e'1}$}] (e'1) at (3,4) {};
    			\node[label=right:{$s_{e'2}$}] (e'2) at (3,3) {};
    			\node[label=right:{$s_{e'3}$}] (e'3) at (3,2) {};
    			\node[minimum size=1.5em] (w') at (3,1) {};
		\end{scope}
		\draw (v) -- (e1);
		\draw (e1) -- (e2);
		\draw (e2) -- (e3);
		\draw (e3) -- (w);
		\draw (v') -- (e'1);
		\draw (e'1) -- (e'2);
		\draw (e'2) -- (e'3);
		\draw (e'3) -- (w');
		\path[->, draw=blue,] (e2) edge (e'1);
		\node (check) at (1.5,0) {{\color{red}\ding{55}}};
		\end{tikzpicture}
		\hspace{5em}
		\begin{tikzpicture}[minimum size = 1em,
    			baseline=(current bounding box.north),
    				every path/.style={thick}]
		\begin{scope}[every node/.style={circle,draw}]
    			\node[minimum size=1.5em] (v) at (0,5) {};
   			\node[label=left:{$s_{e1}$}] (e1) at (0,4) {};
    			\node[label=left:{$s_{e2}$}] (e2) at (0,3) {};
    			\node[label=left:{$s_{e3}$}] (e3) at (0,2) {};
    			\node[minimum size=1.5em] (w) at (0,1) {};
    			\node[minimum size=1.5em] (v') at (3,5) {};
   			\node[label=right:{$s_{e'1}$}] (e'1) at (3,4) {};
    			\node[label=right:{$s_{e'2}$}] (e'2) at (3,3) {};
    			\node[label=right:{$s_{e'3}$}] (e'3) at (3,2) {};
    			\node[minimum size=1.5em] (w') at (3,1) {};
		\end{scope}
		\draw (v) -- (e1);
		\draw (e1) -- (e2);
		\draw (e2) -- (e3);5
		\draw (e3) -- (w);
		\draw (v') -- (e'1);
		\draw (e'1) -- (e'2);
		\draw (e'2) -- (e'3);
		\draw (e'3) -- (w');
		\path[->, draw=blue] (e2) edge (e'2);
		\node (check) at (1.5,0) {{\color{Green}\ding{51}}};
		\end{tikzpicture}
    	\caption{$D$ has to map middle path vertices to middle path vertices.
    	The bijection chosen by $D$ is indicated by the arrows.}
    	\label{fig:middlepath}
    \end{figure}
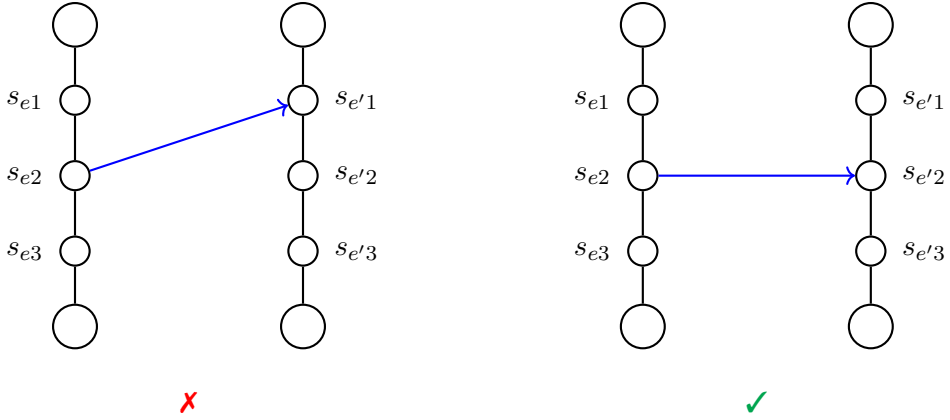
    
    Finally, it is a bijection, because $b$ on the original vertices
    is a bijection, and $b$ on the \enquote{middle} path vertices is a bijection,
    as there must be the same amount of edges in both graphs - 
    otherwise, Spoiler wins $\bpk(G^{(3,<)},H^{(3,<)})$
    by pebbling a vertex $(v,b(v))$ pair of differing degrees 
    and in the next move pebbling the vertex that is a neighbor
    of one of the two but
    not the other.
    And after pebbling some pair $(s_{e2},s_{e'2})$,
    $D$ has to map $\{s_{e_1},s_{e_3}\}$ to $\{s_{e'1},s_{e'3}\}$
    in the next move, otherwise Spoiler wins right away.
    Next, note that $D$ maintains a winning position because
    she has a winning strategy for the starting position,
    and the pebbles added to the position $p$ represent legal moves of Spoiler
    in the simulated game.
    The rest of the invariant is fulfilled by construction.
    Therefore, by initializing $p$ and $p^s$ as the empty position
    and then using this protocol for every round, $D^s$ maintains the above invariant.
    
    Next, we show that $D^s$ follows a winning strategy.
    It follows from the definition of $b^s$ that the bijection always
    respects existing pebbles:
    Once a path vertex on the path corresponding to some edge $e$ has been pebbled,
    $b^s$ is \enquote{fixed} for that pebble, as both $s_{e2}$ and either $s_{e1}$
    or $s_{e3}$ have been pebbled, determining what bijection $D$ has to pick
    on that path.
    Now, assume that in some round Duplicator has picked the bijection $b^s$
    in position $p^s = (\vec{v},b^s(\vec{v}))$
    and Spoiler has picked a new vertex pair
    $(v,v')$ such that $\atp_{\subd{G}}(\vec{v}v) \neq \atp_{\subd{H}}(b^s(\vec{v})v')$.
    Let $w$ be the vertex in $\vec{v}$ that \enquote{disagrees},
    that is $vw \in E(\subd{G})$ while
    $b^s(v)b^s(w) \not \in E(\subd{H})$, or the other way around.
    By symmetry, let $v,w$ be the adjacent pair.
    We make a case distinction.
    
    \begin{itemize}
        \item \emph{Case 1}:
        Both $v$ and $w$ are original vertices.
        Then $b^s(v)$ and $b^s(w)$ are also original vertices.
        Therefore, neither $v,w$ nor $b^s(v),b^s(w)$
        are adjacent in either of the subdivided graphs
        as they are connected by a path. See \Cref{fig:pebblecase1}.
		 \begin{figure}
			\centering
			\begin{tikzpicture}[minimum size = 1em,
    				baseline=(current bounding box.north),
    				every path/.style={thick}]
    			\node[draw=none] (Gsl) at (0,6) {$G^{(s,<)}$};
    			\node[draw=none] (Hsl) at (3,6) {$H^{(s,<)}$};
			\begin{scope}[every node/.style={circle,draw}]
    			\node[minimum size=1.5em] (v) at (0,5) {};
   			\node (e1) at (0,4) {};
    			\node (e3) at (0,2) {};
    			\node[minimum size=1.5em] (w) at (0,1) {};
    			\node[minimum size=1.5em] (v') at (3,5) {};
   			\node (e'1) at (3,4) {};
    			\node (e'3) at (3,2) {};
    			\node[minimum size=1.5em] (w') at (3,1) {};
			\end{scope}
			\draw (v) -- (e1);
			\draw[dashed] (e1) -- (e3);
			\draw (e3) -- (w);
			\draw (v') -- (e'1);
			\draw[dashed] (e'1) -- (e'3);
			\draw (e'3) -- (w');
			\path[->, draw=blue,] (v) edge (v');
			\path[->, draw=blue,] (w) edge (w');
		\end{tikzpicture}
		\hspace{5em}
		\begin{tikzpicture}[minimum size = 1em,
    			baseline=(current bounding box.north),
    				every path/.style={thick}]
    			\node[draw=none] (G3l) at (0,6) {$G^3$};
    			\node[draw=none] (H3l) at (3,6) {$H^3$};
			\begin{scope}[every node/.style={circle,draw}]
    			\node[minimum size=1.5em] (v) at (0,5) {};
   			\node (e1) at (0,4) {};
    			\node (e2) at (0,3) {};
    			\node (e3) at (0,2) {};
    			\node[minimum size=1.5em] (w) at (0,1) {};
    			\node[minimum size=1.5em] (v') at (3,5) {};
   			\node (e'1) at (3,4) {};
    			\node (e'2) at (3,3) {};
    			\node (e'3) at (3,2) {};
    			\node[minimum size=1.5em] (w') at (3,1) {};
			\end{scope}
			\draw (v) -- (e1);
			\draw (e1) -- (e2);
			\draw (e2) -- (e3);5
			\draw (e3) -- (w);
			\draw (v') -- (e'1);
			\draw (e'1) -- (e'2);
			\draw (e'2) -- (e'3);
			\draw (e'3) -- (w');
			\path[->, draw=blue] (v) edge (v');
			\path[->, draw=blue] (w) edge (w');
		\end{tikzpicture}
    	\caption{Illustration of Case 1 of the proof of \Cref{tww4-pebbles}.
    	The bijections chosen by $D$ and $D^s$ are indicated by the arrows.}
    	\label{fig:pebblecase1}
    \end{figure}
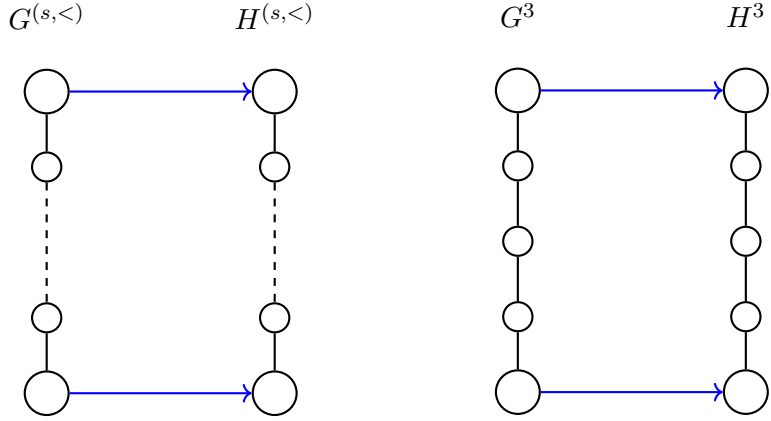
        \item \emph{Case 2}:
        Both $v$ and $w$ are path vertices.
        Due to how $b^s$ is defined,
        $v$ and $w$ are adjacent if and only if $b^s(v)$ and $b^s(w)$ are.
        See \Cref{fig:pebblecase2}.
        \begin{figure}
			\centering
			\begin{tikzpicture}[minimum size = 1em,
    				baseline=(current bounding box.north),
    				every path/.style={thick}]
    			\node[draw=none] (Gsl) at (0,6) {$G^{(s,<)}$};
    			\node[draw=none] (Hsl) at (3,6) {$H^{(s,<)}$};
			\begin{scope}[every node/.style={circle,draw}]
    			\node[minimum size=1.5em] (v) at (0,5) {};
   			\node (e1) at (0,4) {};
    			\node (e3) at (0,2) {};
    			\node[minimum size=1.5em] (w) at (0,1) {};
    			\node[minimum size=1.5em] (v') at (3,5) {};
   			\node (e'1) at (3,4) {};
    			\node (e'3) at (3,2) {};
    			\node[minimum size=1.5em] (w') at (3,1) {};
			\end{scope}
			\draw (v) -- (e1);
			\draw[dashed] (e1) -- (e3);
			\draw (e3) -- (w);
			\draw (v') -- (e'1);
			\draw[dashed] (e'1) -- (e'3);
			\draw (e'3) -- (w');
			\path[->, draw=blue] (e1) edge (e'1);
			\path[->, draw=blue] (e3) edge (e'3);
		\end{tikzpicture}
		\hspace{5em}
		\begin{tikzpicture}[minimum size = 1em,
    			baseline=(current bounding box.north),
    				every path/.style={thick}]
    			\node[draw=none] (G3l) at (0,6) {$G^3$};
    			\node[draw=none] (H3l) at (3,6) {$H^3$};
			\begin{scope}[every node/.style={circle,draw}]
    			\node[minimum size=1.5em] (v) at (0,5) {};
   			\node (e1) at (0,4) {};
    			\node (e2) at (0,3) {};
    			\node (e3) at (0,2) {};
    			\node[minimum size=1.5em] (w) at (0,1) {};
    			\node[minimum size=1.5em] (v') at (3,5) {};
   			\node (e'1) at (3,4) {};
    			\node (e'2) at (3,3) {};
    			\node (e'3) at (3,2) {};
    			\node[minimum size=1.5em] (w') at (3,1) {};
			\end{scope}
			\draw (v) -- (e1);
			\draw (e1) -- (e2);
			\draw (e2) -- (e3);5
			\draw (e3) -- (w);
			\draw (v') -- (e'1);
			\draw (e'1) -- (e'2);
			\draw (e'2) -- (e'3);
			\draw (e'3) -- (w');
			\path[->, draw=blue] (e1) edge (e'1);
			\path[->, draw=blue] (e2) edge (e'2);
			\path[->, draw=blue] (e3) edge (e'3);
		\end{tikzpicture}
    	\caption{Illustration of Case 2 of the proof of \Cref{tww4-pebbles}.}
    	\label{fig:pebblecase2}
    \end{figure}
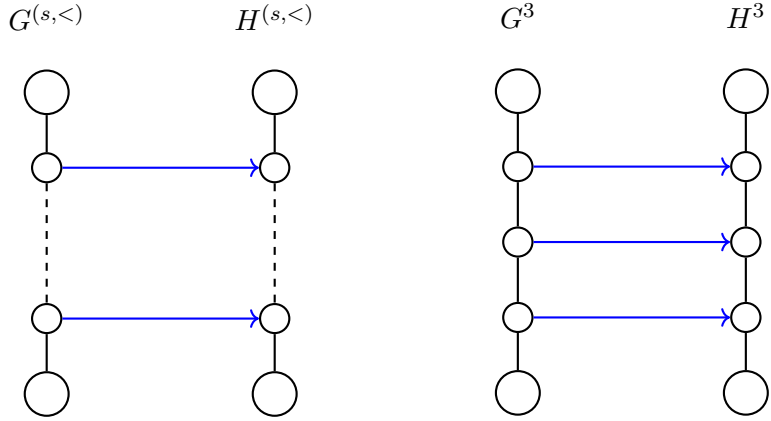
        \item \emph{Case 3}: The vertex $v$ is a path vertex and $w$ is an original vertex.
        Then $v$ is the first vertex on a path from $w$ to some original vertex $x$.
        By symmetry, assume $w<x$ and therefore $v = s_{wx1}$.
        Otherwise the proof runs analogously but with $v = s_{wxs}$.
        Now $b(s_{wx2}) = s_{yz2}$ for some $y,z \in V(H)$.
        It holds that either $b(w) = y$ or $b(w) = z$
        as otherwise, Spoiler can first pebble $s_{wx2}$ and then win
        because $s_{wx1}$ is adjacent to both $w$ and $s_{wx2}$,
        $(w,b(w))$ is pebbled in this position, and there is no vertex in $H^3$
        adjacent to both $b(w)$ and $s_{yz2}$ if $b(w)$ is neither $y$ nor $z$.
        So without loss of generality, let $b(w) = y$.
        Now, in the game in which $(s_{wx2},s_{yz2})$ is pebbled,
        Duplicator has to send $s_{wx1}$ to the vertex between $b(w)$ and $s_{b(w)z2}$,
        and therefore, $b^s$ maps $v=s_{wx1}$ to the first vertex on the path from
        $b^s(w)=b(w)$ to $z$, meaning $b^s(v)$ and $b^s(w)$ are indeed adjacent.
        See \Cref{fig:pebblecase3}.
		
		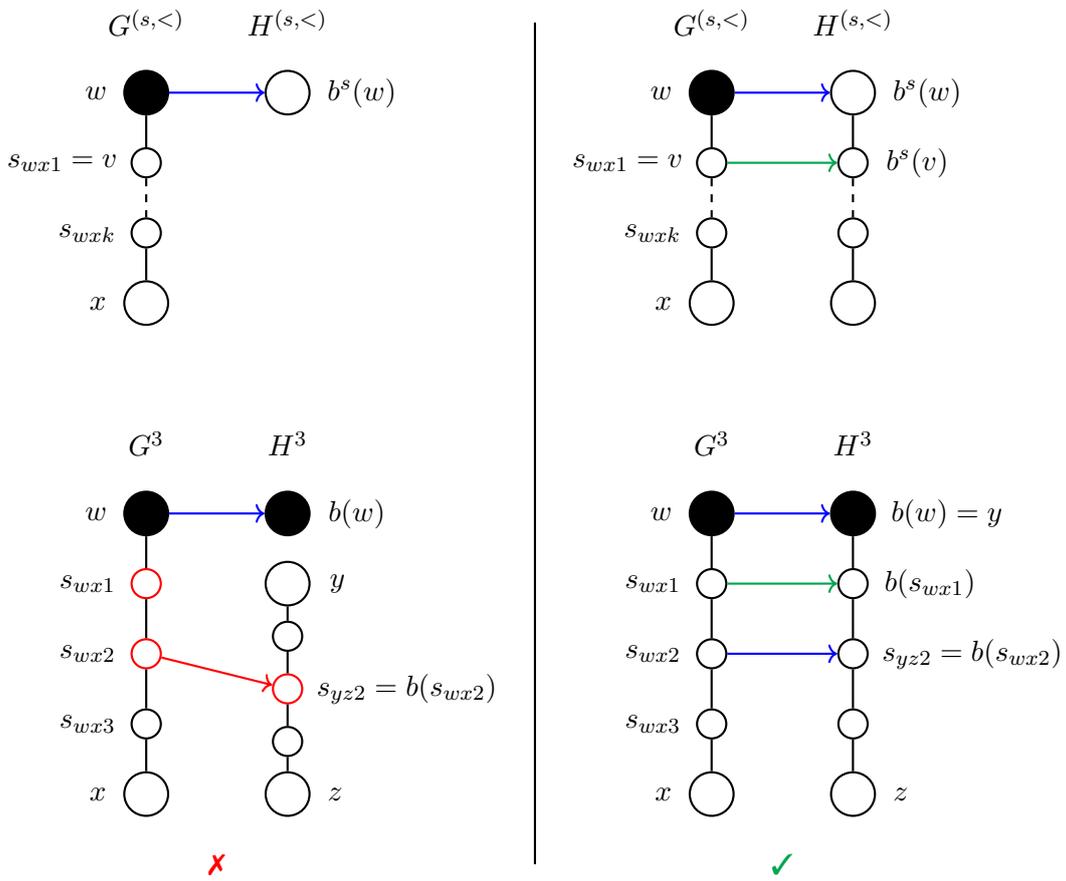
\begin{figure}
		\centering
			\begin{tikzpicture}[minimum size = 1em,
    				baseline=(current bounding box.north),
    				every path/.style={thick}, scale=0.93]
    			//top left
    			\node (Gsl) at (0,4) {$G^{(s,<)}$};
    			\node (Hsl) at (2,4) {$H^{(s,<)}$};
			\begin{scope}[every node/.style={circle,draw}]
    			\node[label=left:{$w$},fill=black,minimum size = 1.5em]
    			 	(wl1) at (0,3) {};
    			\node[label=left:{$s_{wx1}=v$}]
    				(vl) at (0,2) {};
    			\node[label=left:{$s_{wxk}$}]
    				(swxkl) at (0,1) {};
    			\node[label=left:{$x$},minimum size = 1.5em]
    				(xl1) at (0,0) {};
    			\node[label=right:{$b^s(w)$},minimum size = 1.5em]
    				(bswl) at (2,3) {};
			\end{scope}
			\draw (wl1) -- (vl);
			\draw[dashed] (vl) -- (swxkl);
			\draw (swxkl) -- (xl1);
			\path[->,draw=blue] (wl1) edge (bswl);
			
			//bottom left
    			\node (G3l) at (0,-2) {$G^3$};
    			\node (G3l) at (2,-2) {$H^3$};
			\begin{scope}[every node/.style={circle,draw}]
    			\node[label=left:{$w$},fill=black,minimum size = 1.5em]
    			 	(wl2) at (0,-3) {};
    			\node[label=left:{$s_{wx1}$},draw=red]
    				(s1l) at (0,-4) {};
    			\node[label=left:{$s_{wx2}$},draw=red]
    				(s2l) at (0,-5) {};
    			\node[label=left:{$s_{wx3}$}]
    				(s3l) at (0,-6) {};
    			\node[label=left:{$x$},minimum size = 1.5em]
    			 	(xl2) at (0,-7) {};
    			\node[label=right:{$b(w)$},fill=black,minimum size = 1.5em]
    			 	(bwl) at (2,-3) {};
    			\node[label=right:{$y$},minimum size = 1.5em]
    				(yl) at (2,-4) {};
    			\node (syz1l) at (2,-4.75) {};
    			\node[label=right:{$s_{yz2} = b(s_{wx2})$}, draw=red]
    				(syz2l) at (2,-5.5) {};
    			\node (syz3l) at (2,-6.25) {};
    			\node[label=right:{$z$},minimum size = 1.5em]
    				(zl) at (2,-7) {};
			\end{scope}
			\draw (wl2) -- (s1l);
			\draw (s1l) -- (s2l);
			\draw (s2l) -- (s3l);
			\draw (s3l) -- (xl2);
			\draw (yl) -- (syz1l);
			\draw (syz1l) -- (syz2l);
			\draw (syz2l) -- (syz3l);
			\draw (syz3l) -- (zl);
			\path[->,draw=blue] (wl2) edge (bwl);
			\path[->,draw=red] (s2l) edge (syz2l);
			\node (check) at (1,-8) {{\color{red}\ding{55}}};
			
			\draw (5.5,4) -- (5.5,-8);
			
			//top right
    			\node (Gsr) at (8,4) {$G^{(s,<)}$};
    			\node (Hsr) at (10,4) {$H^{(s,<)}$};
			\begin{scope}[every node/.style={circle,draw}]
    			\node[label=left:{$w$},fill=black,minimum size = 1.5em]
    			 	(wr1) at (8,3) {};
    			\node[label=left:{$s_{wx1}=v$}]
    				(vr) at (8,2) {};
    			\node[label=left:{$s_{wxk}$}]
    				(swxkr) at (8,1) {};
    			\node[label=left:{$x$},minimum size = 1.5em]
    				(xr1) at (8,0) {};
    			\node[label=right:{$b^s(w)$},minimum size = 1.5em]
    				(bswr) at (10,3) {};
    			\node[label=right:{$b^s(v)$}]
    				(bsv) at (10,2) {};
    			\node(yendr) at (10,1) {};
    			\node[minimum size = 1.5em]
    			 	(endr) at (10,0) {};
			\end{scope}
			\draw (wr1) -- (vr);
			\draw[dashed] (vr) -- (swxkr);
			\draw (swxkr) -- (xr1);
			\draw (bswr) -- (bsv);
    			\draw[dashed] (bsv) -- (yendr);
			\draw (yendr) -- (endr);
			\path[->,draw=blue] (wr1) edge (bswr);
			\path[->,draw=Green] (vr) edge (bsv);
			
			//bottom right
    			\node (G3r) at (8,-2) {$G^3$};
    			\node (G3r) at (10,-2) {$H^3$};
			\begin{scope}[every node/.style={circle,draw}]
    			\node[label=left:{$w$},fill=black,minimum size = 1.5em]
    			 	(wr2) at (8,-3) {};
    			\node[label=left:{$s_{wx1}$}]
    				(s1r) at (8,-4) {};
    			\node[label=left:{$s_{wx2}$}]
    				(s2r) at (8,-5) {};
    			\node[label=left:{$s_{wx3}$}]
    				(s3r) at (8,-6) {};
    			\node[label=left:{$x$},minimum size = 1.5em]
    			 	(xr2) at (8,-7) {};
    			\node[label=right:{$b(w) = y$},fill=black,minimum size = 1.5em]
    			 	(bwr) at (10,-3) {};
    			\node[label=right:{$b(s_{wx1})$}] (syz1r) at (10,-4) {};
    			\node[label=right:{$s_{yz2} = b(s_{wx2})$}]
    				(syz2r) at (10,-5) {};
    			\node (syz3r) at (10,-6) {};
    			\node[label=right:{$z$},minimum size = 1.5em]
    				(zr) at (10,-7) {};
			\end{scope}
			\draw (wr2) -- (s1r);
			\draw (s1r) -- (s2r);
			\draw (s2r) -- (s3r);
			\draw (s3r) -- (xr2);
			\draw (bwr) -- (syz1r);
			\draw (syz1r) -- (syz2r);
			\draw (syz2r) -- (syz3r);
			\draw (syz3r) -- (zr);
			\path[->,draw=blue] (wr2) edge (bwr);
			\path[->,draw=blue] (s2r) edge (syz2r);
			\path[->,draw=Green] (s1r) edge (syz1r);
			\node (cross) at (9,-8) {{\color{Green}\ding{51}}};
			\end{tikzpicture}  
    		\caption{Illustration of Case 3 of the proof of \Cref{tww4-pebbles}.
    		Pebbled vertices are drawn in black.}
    		\label{fig:pebblecase3}
		\end{figure}    
        
        \item \emph{Case 4}: The vertex $v$ is an original vertex, and $w$ is a path vertex.
        Then $w$ is the first vertex on a path from $v$ to some original vertex
        $x$. By symmetry $v<x$ and therefore $w=s_{vx1}$,
        otherwise $w=s_{vxs}$ and the proof runs analogously.
        We know due to how $b^s$ is defined that for some $y,z \in V(H)$,
        either $b^s(s_{vx1})=s_{yz1}$ or $b^s(s_{vx1})=s_{yzs}$.
        \begin{itemize}
            \item \emph{Case 4.1:} Assume $b^s(s_{vx1})=s_{yz1}$.
            Due to the invariant $D^s$ maintains, $(s_{vx1},s_{yz1})$
            and $(s_{vx2},s_{yz2})$ are pebbled in $\bp{2k}(G^{(3,<)},H^{(3,<)})$,
            and therefore $b(v)=y$ must hold in this round as otherwise Spoiler wins
            right away. Therefore $b^s(v)=y$ and so $b^s(v)=y$ and $b^s(w)=s_{yz1}$
            are adjacent.
            See the left half of \Cref{fig:pebblecase3}.
            \item \emph{Case 4.2:} Otherwise, $b^s(s_{vx1})=s_{yzs}$.
            Analogous argument, only now $(s_{vx2},s_{yz2})$ and $(s_{vx1},s_{yz3})$
            are pebbled in $\bp{2k}(G^{(3,<)},H^{(3,<)})$ and therefore $b(v) = z$ must hold.
            See the right half of \Cref{fig:pebblecase3}.
        \end{itemize}
        
        \begin{figure}
        \centering
			\begin{tikzpicture}[minimum size = 1em,
    				baseline=(current bounding box.north),
    				every path/.style={thick},
    				scale=0.97]
    			//top left
    			\node (Gsl) at (0,4) {$G^{(s,<)}$};
    			\node (Hsl) at (2,4) {$H^{(s,<)}$};
			\begin{scope}[every node/.style={circle,draw}]
    			\node[label=left:{$v$},minimum size = 1.5em]
    			 	(vl1) at (0,3) {};
    			\node[label=left:{$s_{vx1}=w$},fill=black]
    				(wl) at (0,2) {};
    			\node (svxkl) at (0,1) {};
    			\node[label=left:{$x$},minimum size = 1.5em]
    				(xl1) at (0,0) {};
    			\node[label=right:{$y = b(v)$},minimum size = 1.5em]
    				(bv) at (2,3) {};
    			\node[label=right:{$b^s(w) = s_{yz1}$},fill=black]
    				(bswl) at (2,2) {};
    			\node(syzk1) at (2,1) {};
    			\node[label=right:{$z$},minimum size = 1.5em]
    				(zl1) at (2,0) {};
			\end{scope}
			\draw (vl1) -- (wl);
			\draw[dashed] (wl) -- (svxkl);
			\draw (svxkl) -- (xl1);
			\path[->,draw=Green] (vl1) edge (bv);
			\path[->,draw=blue] (wl) edge (bswl);
			\draw (bv) -- (bswl);
			\draw[dashed] (bswl) -- (syzk1);
			\draw (syzk1) -- (zl1);
			
			//bottom left
    			\node (G3l) at (0,-2) {$G^3$};
    			\node (G3l) at (2,-2) {$H^3$};
			\begin{scope}[every node/.style={circle,draw}]
    			\node[label=left:{$v$},minimum size = 1.5em]
    			 	(vl2) at (0,-3) {};
    			\node[label=left:{$s_{wv1}$},fill=black]
    				(s1l) at (0,-4) {};
    			\node[label=left:{$s_{vx2}$},fill=black]
    				(s2l) at (0,-5) {};
    			\node(s3l) at (0,-6) {};
    			\node[label=left:{$x$},minimum size = 1.5em]
    			 	(xl2) at (0,-7) {};
    			\node[label=right:{y},minimum size = 1.5em]
    			 	(yl) at (2,-3) {};
    			\node[label=right:{$s_{yz1}$},fill=black] (syz1l) at (2,-4) {};
    			\node[label=right:{$s_{yz2}$}, fill=black]
    				(syz2l) at (2,-5) {};
    			\node (syz3l) at (2,-6) {};
    			\node[label=right:{$z$},minimum size = 1.5em]
    				(zl2) at (2,-7) {};
			\end{scope}
			\draw (vl2) -- (s1l);
			\draw (s1l) -- (s2l);
			\draw (s2l) -- (s3l);
			\draw (s3l) -- (xl2);
			\draw (yl) -- (syz1l);
			\draw (syz1l) -- (syz2l);
			\draw (syz2l) -- (syz3l);
			\draw (syz3l) -- (zl2);
			\path[->,draw=Green] (wl2) edge (yl);
			\path[->,draw=blue] (s1l) edge (syz1l);
			\path[->,draw=blue] (s2l) edge (syz2l);
			
			\draw (5.5,4) -- (5.5,-7);
			
			//top right
    			\node (Gsr) at (8,4) {$G^{(s,<)}$};
    			\node (Hsr) at (10,4) {$H^{(s,<)}$};
			\begin{scope}[every node/.style={circle,draw}]
    			\node[label=left:{$v$},minimum size = 1.5em]
    			 	(vr1) at (8,3) {};
    			\node[label=left:{$s_{vx1}=w$},fill=black]
    				(wr) at (8,2) {};
    			\node (svxkr) at (8,1) {};
    			\node[label=left:{$x$},minimum size = 1.5em]
    				(xr1) at (8,0) {};
    			\node[label=right:{$y$},minimum size = 1.5em]
    				(yr1) at (10,3) {};
    			\node
    				(bswr) at (10,2) {};
    			\node[label=right:{$b^s(w) = s_{yzs}$},fill=black]
    				(syzk1) at (10,1) {};
    			\node[label=right:{$z = b^s(v)$},minimum size = 1.5em]
    				(zr1) at (10,0) {};
			\end{scope}
			\draw (vr1) -- (wr);
			\draw[dashed] (wr) -- (svxkr);
			\draw (svxkr) -- (xr1);
			\path[->,draw=Green] (vr1) edge (zr1);
			\path[->,draw=blue] (wr) edge (syzk1);
			\draw (yr1) -- (bswr);
			\draw[dashed] (bswr) -- (syzk1);
			\draw (syzk1) -- (zr1);
			
			//bottom right
    			
    			\node (G3r) at (8,-2) {$G^3$};
    			\node (G3r) at (10,-2) {$H^3$};
			\begin{scope}[every node/.style={circle,draw}]
    			\node[label=left:{$v$},minimum size = 1.5em]
    			 	(vr2) at (8,-3) {};
    			\node[label=left:{$s_{wv1}$},fill=black]
    				(s1r) at (8,-4) {};
    			\node[label=left:{$s_{vx2}$},fill=black]
    				(s2r) at (8,-5) {};
    			\node(s3r) at (8,-6) {};
    			\node[label=left:{$x$},minimum size = 1.5em]
    			 	(xr2) at (8,-7) {};
    			\node[label=right:{y},minimum size = 1.5em]
    			 	(yr) at (10,-3) {};
    			\node (syz1r) at (10,-4) {};
    			\node[label=right:{$s_{yz2}$}, fill=black]
    				(syz2r) at (10,-5) {};
    			\node[label=right:{$s_{yz3}$},fill=black]
    				(syz3r) at (10,-6) {};
    			\node[label=right:{$z$},minimum size = 1.5em]
    				(zr2) at (10,-7) {};
			\end{scope}
			\draw (vr2) -- (s1r);
			\draw (s1r) -- (s2r);
			\draw (s2r) -- (s3r);
			\draw (s3r) -- (xr2);
			\draw (yr) -- (syz1r);
			\draw (syz1r) -- (syz2r);
			\draw (syz2r) -- (syz3r);
			\draw (syz3r) -- (zr2);
			\path[->,draw=Green] (wr2) edge (zr2);
			\path[->,draw=blue] (s1r) edge (syz3r);
			\path[->,draw=blue] (s2r) edge (syz2r);
			
			\draw (5.5,4) -- (5.5,-7);
			\end{tikzpicture}
    		\caption{Illustration of Case 4 of the proof of \Cref{tww4-pebbles}.}
    		\label{fig:pebblecase4}
        \end{figure}
    \end{itemize}
    In this way, $D^s$ wins $\bpk(\subd{G},\subd{H})$. 
\end{proof}
\section{Proof of \texorpdfstring{\cref{csguv}}{Lemma 4.4}}\label{cssequence}
\begin{proof}
For the forward direction,
let $f\colon G^c \cong H^c$ be an isomorphism
and $u,v \in V(G)$ such that $\cs(G^c,u,v) \neq \text{f}$,
then it is clear to see that $\cs(G^c,u,v) = \cs(H^c,f(u),f(v))$.
For the backward direction,
one can define an isomorphism $f \colon G^c \cong H^c$
by following the canonical contraction sequence laid out in $\cs(G^c,u,v)$.
Consider the series of graphs $G_i$, $H_i$
and vertices $u_{G,i},v_{G,i},u_{H,i},v_{H,i}$
as described in the computation of $\text{cs}(G,u,v)$
and $\text{cs}(H,u',v')$.
Then the isomorphism can be defined as follows:
\begin{itemize}
\item Set $f(u) = u'$ and $f(v) = v'$.
\item For the first iteration $i = 0$ as well as all iterations $i$
in which a red edge gets contracted, set $f(v_{G,i+1}) = v_{H,i+1}$.
\item For every other iteration $i$,
define $f$ in such a way that for all $T$ and $c$, we have
\[f(\ntw_{G_i}^{T,u_{G,i},c}(u_{G,i},v_{G,i})) =
\ntw_{H_i}^{T,u_{H,i},c}(u_{H,i},v_{H,i})\]
and
\[f(\ntw_{G_i}^{T,v_{G,i},c}(u_{G,i},v_{G,i})) =
\ntw_{H_i}^{T,v_{H,i},c}(u_{H,i},v_{H,i})\]
in any bijective way that respects the colors.
This is the case when two vertices were part of the same contraction phase,
and therefore well defined because there was always the same amount of
near twins respective each color and pair of atomic types in each iteration.
\end{itemize}
\noindent
To prove that this is indeed an isomorphism, first note that $f$ respects
the coloring of the graphs. Assume $f$ is not an isomorphism.
We show that if there are ${w_1,w_2 \in V(G)}$ such that
$w_1w_2 \in E(G)$ but $f(w_1)f(w_2) \not \in E(H)$,
we derive a contradiction.
For $w_1w_2 \not \in E(G)$ but $f(w_1)f(w_2) \in E(H)$,
the proof is analogous by symmetry.
We make a case distinction.
\begin{itemize}
\item \emph{Case 1:} The vertices $w_1$ and $w_2$ were contracted in the first contraction,
that is $w_1 = u$, and $w_2 = v$.
Then because the first symbol of $\cs(G^c,u,v)$ and $\cs(H^c,u',v')$ are the same,
$uv \in E(G)$ if and only if $u'v' \in E(H)$ if and only if $f(u)f(v) \in E(H)$.

\item \emph{Case 2:} The vertices $w_1$ and $w_2$ were part of the same contraction phase.
	Let~$i$ be the iteration such that
	$w_1, w_2 \in \ntw_{G_i}^{T,x,c}(u_{G,i},v_{G,i})$ 
	for some $T,x,c$.
	Then we know $f(w_1), f(w_2) \in \ntw_{H_i}^{T,x',c}(u_{H,i},v_{H,i})$
	for the corresponding vertex~$x'$.
	Note that $w_1x\in E(G_i)$ and $w_2x \in E(G_i)$ holds,
	as otherwise contracting either would create an extra red edge.
	Therefore, $f(w_1)x' \in E(H_i)$ and $f(w_2)x' \in E(H_i)$
	must hold
	in order to have the according atomic types.
	But then contracting either $f(w_1)$ or $f(w_2)$ into $x'$ in $H_i$
	would create a red edge, and therefore they 
	could not have been in $\ntw_{H_i}^{T,x',c}(u_{H,i},v_{H,i})$.
	See 	\Cref{fig:tww1canoncase2}.

	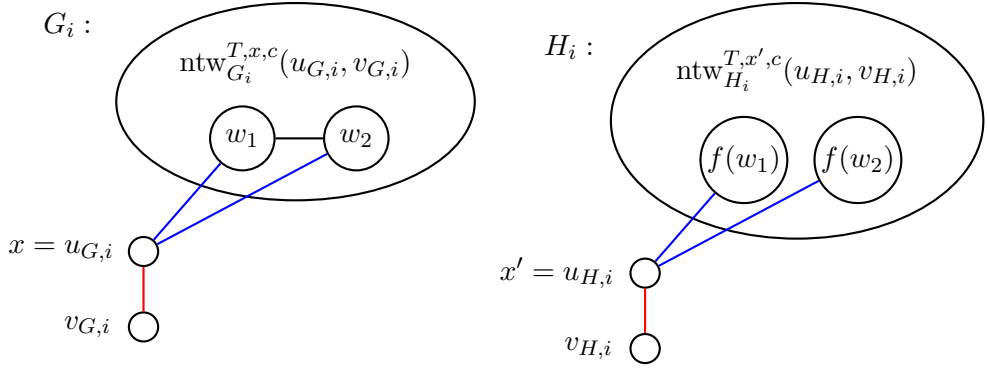
\begin{figure}
			\centering
			\begin{tikzpicture}[minimum size = 1em,
    				baseline=(current bounding box.north),
    				every path/.style={thick}]
    			\node[draw=none] (Gi) at (-1,4) {$G_i:$};
    			\node[draw=none] (ntwleft) at (2,3.5) {$\ntw^{T,x,c}_{G_i}(u_{G,i},v_{G,i})$};
			\begin{scope}[every node/.style={circle,draw}]
   			\node[label=left:{$x = u_{G,i}$}] (ugi) at (0,1) {};
   			\node[label=left:{$v_{G,i}$}] (vgi) at (0,0) {};
   			\node (w1) at (1.3,2.5) {$w_1$};
   			\node (w2) at (2.8,2.5) {$w_2$};
			\end{scope}
   			\node[ellipse,draw=black,inner sep=0,
    				fit = (ntwleft) (w1) (w2)] (ntwleftubble) {};
    			\draw (w1) -- (w2);
    			\draw[draw=blue] (ugi) -- (w1);
    			\draw[draw=blue] (ugi) -- (w2);
    			\draw[draw=red] (ugi) -- (vgi);
		\end{tikzpicture}
		\begin{tikzpicture}[minimum size = 1em,
    			baseline=(current bounding box.north),
    				every path/.style={thick}]
    			\node[draw=none] (Hi) at (-1,4) {$H_i:$};
    			\node[draw=none] (ntwright) at (2,3.7) {$\ntw^{T,x',c}_{H_i}(u_{H,i},v_{H,i})$};
			\begin{scope}[every node/.style={circle,draw}]
   			\node[label=left:{$x' = u_{H,i}$}] (uhi) at (0,1) {};
   			\node[label=left:{$v_{H,i}$}] (vhi) at (0,0) {};
   			\node[inner sep=0.1em] (fw1) at (1.3,2.5) {$f(w_1)$};
   			\node[inner sep=0.1em] (fw2) at (2.8,2.5) {$f(w_2)$};
			\end{scope}
   			\node[ellipse,draw=black,inner sep=0,
    				fit = (ntwright) (fw1) (fw2)] (ntwrightubble) {};
    			\draw[draw=blue] (uhi) -- (fw1);
    			\draw[draw=blue] (uhi) -- (fw2);
    			\draw[draw=red] (uhi) -- (vhi);
		\end{tikzpicture}
    	\caption{Illustration of Case 2 where $x=u_{G,i}$.}
    	\label{fig:tww1canoncase2}
    	\end{figure}
	
\item \emph{Case 3:}
	The vertices $w_1$ and $w_2$ were chosen in different contraction phases. \\
	Let $i \in \N$
	such that 
	$w_1 \in \ntw_{G_i}^{T_1,x_1,c_1}(u_{G,i},v_{G,i})$ 
	for some $T_1,x_1,c_1$,
	and let Let $j \in \N$
	such that 
	$w_2 \in \ntw_{G_j}^{T_2,x_2,c_2}(u_{G,j},v_{G,j})$ 
	for some $T_2,x_2,c_2$.
	Then there are corresponding vertices $x_1'$ and $x_2'$
	such that $f(w_1) \in \ntw_{H_i}^{T_1,x_1',c_1}(u_{H,i},v_{H,i})$ 
	and $f(w_2) \in \ntw_{H_i}^{T_2,x_2',c_2}(u_{H,i},v_{H,i})$.
	By symmetry, 
	let $w_2$ be the vertex that was part of a later contraction phase,
	so either $i < j$,
	or $T_2,x_2,c_2$ appear later than $T_1,x_1,c_1$ in the enumeration
	of the near twin sets.
	Then $w_2x_1 \in E(G_i)$ must hold before contraction,
	as otherwise, contracting $w_1$ into $x_1$ would create an extra red edge in $G_i$.
	Now, assume $f(w_2)x_1' \not \in E(H_i)$.
	\begin{itemize}
		\item \emph{Case 3.1:}
		The vertices $w_1$ and $w_2$ were part of the same iteration,
		that is $i = j$.
		In this case, $f(w_2)$ could never be part of the same contraction
		phase as $w_2$ because $w_2$ is adjacent to $x_1$ and $f(w_2)$ 
		is not adjacent to~$x_1'$.
		See 	\Cref{fig:tww1canoncase31}.

		\begin{figure}[t]
			\centering
			\begin{tikzpicture}[minimum size = 1em,
    				baseline=(current bounding box.north),
    				every path/.style={thick}]
    			\node[draw=none] (Gi) at (-1,4) {$G_i:$};
    			\node[draw=none] (ntw1left) at (2,3.5)
    			{$\ntw^{T_1,x_1,c_1}_{G_i}(u_{G,i},v_{G,i})$};
    			\node[draw=none] (ntw2left) at (2,-2.5)
    			{$\ntw^{T_2,x_2,c_2}_{G_i}(u_{G,i},v_{G,i})$};
			\begin{scope}[every node/.style={circle,draw}]
   			\node[label=left:{$u_{G,i}$}] (ugi) at (0,1) {};
   			\node[label=left:{$x_1 = v_{G,i}$}] (vgi) at (0,0) {};
   			\node (w1) at (2,2.5) {$w_1$};
   			\node (w2) at (2,-1.5) {$w_2$};
			\end{scope}
   			\node[ellipse,draw=black,inner sep=0,
    				fit = (ntw1left) (w1)] (ntwleftubble1) {};
   			\node[ellipse,draw=black,inner sep=0,
    				fit = (ntw2left) (w2)] (ntwleftubble2) {};
    			\draw (w1) -- (w2);
    			\draw[draw=blue] (vgi) -- (w2);
    			\draw[draw=red] (ugi) -- (vgi);
		\end{tikzpicture}
		\begin{tikzpicture}[minimum size = 1em,
    			baseline=(current bounding box.north),
    				every path/.style={thick}]
    			\node[draw=none] (Hi) at (-1,4) {$H_i:$};
    			\node[draw=none] (ntw1left) at (2,3.5)
    			{$\ntw^{T_1,x_1',c_1}_{H_i}(u_{H,i},v_{H,i})$};
    			\node[draw=none] (ntw2left) at (2,-2.5)
    			{$\ntw^{T_2,x_2',c_2}_{H_i}(u_{H,i},v_{H,i})$};
			\begin{scope}[every node/.style={circle,draw}]
   			\node[label=left:{$u_{H,i}$}] (uhi) at (0,1) {};
   			\node[label=left:{$x_1' = v_{H,i}$}] (vhi) at (0,0) {};
   			\node[inner sep=0.1em] (fw1) at (2,2.5) {$f(w_1)$};
   			\node[inner sep=0.1em] (fw2) at (2,-1.5) {$f(w_2)$};
			\end{scope}
   			\node[ellipse,draw=black,inner sep=0,
    				fit = (ntw1left) (fw1)] (ntwleftubble1) {};
   			\node[ellipse,draw=black,inner sep=0,
    				fit = (ntw2left) (fw2)] (ntwleftubble2) {};
    			\draw[draw=blue, dashed] (vhi) -- (fw2);
    			\draw[draw=red] (uhi) -- (vhi);
		\end{tikzpicture}
    		\caption{Illustration of Case 3.1 where $x_1=v_{G,i}$.}
    		\label{fig:tww1canoncase31}
    		\end{figure}
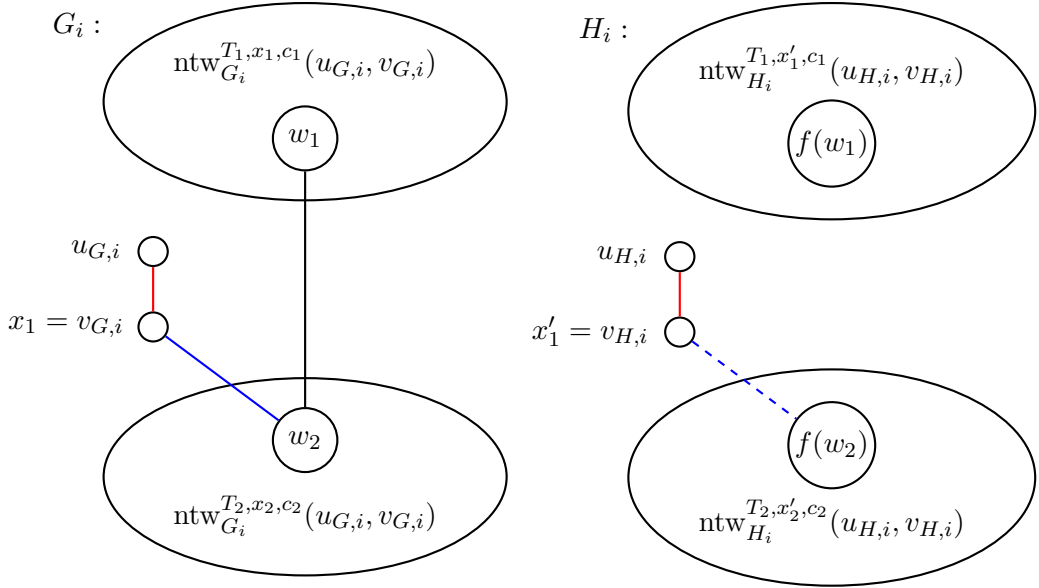

		\item \emph{Case 3.2:} The vertices $w_1$ and $w_2$ were not 
		contracted during the same iteration,
		that is $i < j$ and therefore
		$u_{G,i}$ and $v_{G,i}$ were contracted before~$w_2$.
		Since the contraction did not create a red edge to $w_2$,
		it holds that~$w_2$ is adjacent
		to the vertex resulting from contracting $u_{G,i}$ and $v_{G,i}$
		and~$f(w_2)$ is not connected to the vertex resulting from
		contracting~$u_{H,i}$ and $v_{H,i}$. The argument
		can now be repeated analogously until 
		$w_2$ and~$f(w_2)$ can again not be in the same contraction phase
		because $w_2$ is connected to $u_{G,j}$
		and $f(w_2)$ is not connected to $u_{H,j}$.
		See 	\Cref{fig:tww1canoncase32}.

		\begin{figure}
			\centering
			\begin{tikzpicture}[minimum size = 1em,
    				baseline=(current bounding box.north),
    				every path/.style={thick}]
    			\node[draw=none] (Gi) at (-1,4) {$G_i:$};
    			\node[draw=none] (ntwleft) at (2,3.5)
    			{$\ntw^{T_1,x_1,c_1}_{G_i}(u_{G,i},v_{G,i})$};
			\begin{scope}[every node/.style={circle,draw}]
   			\node[label=left:{$x_1 = u_{G,i}$}] (ugi) at (0,1) {};
   			\node[label=left:{$v_{G,i}$}] (vgi) at (0,0) {};
   			\node (w1) at (2,2.5) {$w_1$};
   			\node (w2) at (2,0.5) {$w_2$};
			\end{scope}
   			\node[ellipse,draw=black,inner sep=0,
    				fit = (ntwleft) (w1)] (ntwleftubble) {};
    			\draw (w1) -- (w2);
    			\draw[draw=blue] (vgi) -- (w2);
    			\draw[draw=blue] (ugi) -- (w2);
    			\draw[draw=red] (ugi) -- (vgi);
		\end{tikzpicture}
		\begin{tikzpicture}[minimum size = 1em,
    			baseline=(current bounding box.north),
    				every path/.style={thick}]
    			\node[draw=none] (Hi) at (-1,4) {$H_i:$};
    			\node[draw=none] (ntwright) at (2,3.7)
    			{$\ntw^{T_1,x_1',c_1}_{H_i}(u_{H,i},v_{H,i})$};
			\begin{scope}[every node/.style={circle,draw}]
   			\node[label=left:{$x_1' = u_{H,i}$}] (uhi) at (0,1) {};
   			\node[label=left:{$v_{H,i}$}] (vhi) at (0,0) {};
   			\node[inner sep=0.1em] (fw1) at (2,2.5) {$f(w_1)$};
   			\node[inner sep=0.1em] (fw2) at (2,0.5) {$f(w_2)$};
			\end{scope}
   			\node[ellipse,draw=black,inner sep=0,
    				fit = (ntwright) (fw1)] (ntwrightubble) {};
    			\draw[draw=blue, dashed] (vhi) -- (fw2);
    			\draw[draw=blue, dashed] (uhi) -- (fw2);
    			\draw[draw=red] (uhi) -- (vhi);
		\end{tikzpicture}
		
		\vspace*{10mm}		
		
		\hspace*{5mm}
		\begin{tikzpicture}[minimum size = 1em,
    				baseline=(current bounding box.north),
    				every path/.style={thick}]
    			\node[draw=none] (Gj) at (-1,4) {$G_j:$};
    			\node[draw=none] (ntwleft) at (2,3.5)
    			{$\ntw^{T_1,x_1,c_1}_{G_j}(u_{G,j},v_{G,j})$};
			\begin{scope}[every node/.style={circle,draw}]
   			\node[label=left:{$u_{G,j}$}] (ugi) at (0,1) {};
   			\node[label=left:{$v_{G,j}$}] (vgi) at (0,0) {};
   			\node (w2) at (2,2.5) {$w_2$};
			\end{scope}
   			\node[ellipse,draw=black,inner sep=0,
    				fit = (ntwleft) (w2)] (ntwleftubble) {};
    			\draw (ugi) -- (w2);
    			\draw[draw=red] (ugi) -- (vgi);
		\end{tikzpicture}
		\hspace*{5mm}
		\begin{tikzpicture}[minimum size = 1em,
    			baseline=(current bounding box.north),
    				every path/.style={thick}]
    			\node[draw=none] (Hj) at (-1,4) {$H_j:$};
    			\node[draw=none] (ntwright) at (2,3.7)
    			{$\ntw^{T_1,x_1',c_1}_{H_j}(u_{H,j},v_{H,j})$};
			\begin{scope}[every node/.style={circle,draw}]
   			\node[label=left:{$u_{H,j}$}]  (uhi) at (0,1) {};
   			\node[label=left:{$v_{H,j}$}] (vhi) at (0,0) {};
   			\node[inner sep=0.1em] (fw2) at (2,2.5) {$f(w_2)$};
			\end{scope}
   			\node[ellipse,draw=black,inner sep=0,
    				fit = (ntwright) (fw2)] (ntwrightubble) {};
    			\draw[dashed] (uhi) -- (fw2);
    			\draw[draw=red] (uhi) -- (vhi);
		\end{tikzpicture}
    	\caption{Illustration of Case 3.2 where $x_1=u_{G,i}$.}
    	\label{fig:tww1canoncase32}
    	\end{figure}

	\end{itemize}
	Therefore,$f(w_2)x_1' \in E(H_i)$.
	But then $x_1'$ and $f(w_1)$ disagree on $f(w_2)$
	and therefore $f(w_1)$ would not have been chosen for that contraction phase.
\item \emph{Case 4:} For some $i \in \N$,
the vertices
	$u_{G,i}$ and $v_{G,i}$ were contracted, creating a red edge to $w_1$
	or $w_2$. Without loss of generality, let $w_1$ be that vertex.
	Then $w_2v_{G,i+1} \in G_{i+1}$ but $f(w_2)v_{H,i+1} \not \in H_{i+1}$.
	See 	\Cref{fig:tww1canoncase4} for both cases.
	\begin{itemize}
		\item \emph{Case 4.1:} The vertices $u_{G,i+1}$ and $v_{G,i+1}$ are not contracted
		before $w_2$. Then this case is analogous to 3.1
		and $w_2$ and $f(w_2)$ cannot be part of the same
		contraction sequence. 
		\item \emph{Case 4.2:} The vertices $u_{G,i+1}$ and $v_{G,i+1}$ were contracted
		before $w_2$. If there is no $j > i$ such that contracting
		$u_{G,j}$ and $v_{G,j}$ creates a red edge to $w_2$,
		the argument is analogous to Case 3.2.
		If there is,
		then one can see that $w_2u_{G,j} \in G_j$ but $f(w_2)u_{H,j} \not \in H_j$
		and therefore the contraction sequences differ.
		
		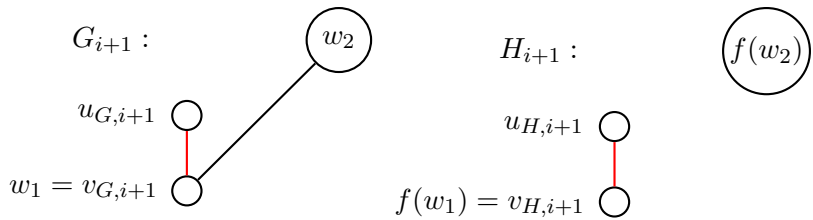
\begin{figure}[t]
			\centering
			\begin{tikzpicture}[minimum size = 1em,
    				baseline=(current bounding box.north),
    				every path/.style={thick}]
    			\node[draw=none] (Gi) at (-1,2) {$G_{i+1}:$};
			\begin{scope}[every node/.style={circle,draw}]
   			\node[label=left:{$u_{G,i+1}$}] (ugi) at (0,1) {};
   			\node[label=left:{$w_1 = v_{G,i+1}$}] (vgi) at (0,0) {};
   			\node (w2) at (2,2) {$w_2$};
			\end{scope}
    			\draw (vgi) -- (w2);
    			\draw[draw=red] (ugi) -- (vgi);
		\end{tikzpicture}
		\begin{tikzpicture}[minimum size = 1em,
    			baseline=(current bounding box.north),
    				every path/.style={thick}]
    			\node[draw=none] (Hi) at (-1,2) {$H_{i+1}:$};
			\begin{scope}[every node/.style={circle,draw}]
   			\node[label=left:{$u_{H,i+1}$}] (uhi) at (0,1) {};
   			\node[label=left:{$f(w_1) = v_{H,i+1}$}] (vhi) at (0,0) {};
   			\node[inner sep=0.1em] (fw2) at (2,2) {$f(w_2)$};
			\end{scope}
    			\draw[draw=red] (uhi) -- (vhi);
		\end{tikzpicture}
	    	\caption{Illustration of Case 4.}
    		\label{fig:tww1canoncase4}
    		\end{figure}

	\end{itemize}
\end{itemize}
Therefore, the bijection $f\colon G^c \rightarrow H^c$ is in fact an isomorphism.
\end{proof}

\section{Definable Canonization for graphs of twin-width 1}\label{tww1:definablecanonization}

For preliminaries on FP+C
as well as a detailed explanation on definable canonization in FP+C,
see \cite{rank-width}.
As in that work, the method here will be described algorithmically instead of directly
within the logic,
but in a way where the implementation in FP+C is clear.
Again, we first consider colored prime graphs.
We first define an edge relation for every starting contraction
pair $u,v$,
along with a number flag indicating if there is a corresponding
contraction sequence, which results in a 5-ary relation:
4 vertex variables and one number variable.
Given a starting contraction pair $u,v$,
initialize the starting contraction accordingly,
the edge set as empty, and the flag as 0.
Start by adding the edge $u,v$ if there is one
in the original graph.
Then, we define the rest in contraction phases,
in parallel to the canonization
algorithm for colored prime graphs.
To accomplish this, keep track of sets $U$ and $V$
along with a contraction phase counter for each,
initialized with $(\{u\},0)$ and $(\{v\},0)$.
If the flag is at 1 in some contraction phase, do nothing. If it is at 0:
\begin{itemize}
\item 
In each even contraction phase $i$,
define the set $W$ of vertices homogeneously adjacent to exactly
one of the sets $U,V$ from the previous contraction phase.
If there is more than one such vertex,
set the flag to 1.
Otherwise, add the corresponding edge
to the edge set.
Add to $U$ all vertices in $V$
and increase the contraction phase counter by 1.
Set $V$ to be the vertex in
$W$ and increase its contraction phase counter by 1.
This is where we need the counters,
as $U$ and $V$ do not monotonically increase
in each step.
\item In each odd contraction phase $i$,
determine the sets of near-twins $W$
with regards to the sets
$U,V$ of the last contraction phase,
for each combination of atomic types and colors.
Since there is already a fixed order on the colors and atomic types
and the ordering within the near-twins sets is irrelevant
as their connections to previously chosen vertices are the same,
this gives us an order on the safely contractable vertices
within this phase. Add them to the corresponding vertex sets
in this order. Increase the contraction phase counters by 1
if there was at least one vertex added.
\end{itemize}
Eventually, this process terminates as either the flag gets set to 1,
or the set $U$ will contain all vertices and the counters
stop increasing. If the flag never gets set to 1,
all edges of the original graph get added in the canonical order
defined in the canonization algorithm for prime graphs of twin width 1
according to the selected starting contraction pair.
To get just one canonical edge set, we iterate
over all the contraction pairs for which the flag does not get set to 1,
and take the lexicographical minimum
of the resulting edge sets.
This is possible in FP+C as in \cite{rank-width}.

From here, one has to lift this result to all graphs
of twin-width 1. To do this, we define the modular
decomposition tree of a graph in FP+C.
To do this, the set of modules is encoded
by a 3-ary relation with two vertex variables and one number
variable: The number variable says which level of the tree
the modules are on, and the corresponding 2-ary relation
is an equivalence relation indicating which vertices
are in the same module.
The edge set is encoded via a 4-ary relation,
two vertex variables and two number variables,
indicating which modules of the different levels of the tree
have an edge between them.
The labelling function is encoded via a 3-ary relation,
one vertex variable and two number variables:
The vertex and first number indicate the module,
and the second number is between 0 and 3 and indicates
the label of the module.
To define the modular decomposition tree using this encoding,
initiate the modules by putting each vertex in its own module
at level 0, labeled single.
Then, calculate the tree bottom up exactly
like in the proof for Lemma \ref{2wl-modules}.
In this way, the top level of the tree defines the maximal modules
containing each vertex and thus also if the graph is prime or not.
Using this, the canonical copy of a graph of twin-width 1
can be defined using the modular decomposition tree
and lexicographical ordering exactly
like in \cref{tww1-canonicalform}.

\section{Sparse graphs of twin-width 2 have bounded tree-width}
\label{appendix:sparse:tww2->bounded_tw}
In this section we want to show that the techniques used to prove \Cref{thm:stable:tww2->bounded_rw} and those of \cite{sparse_tww2_bounded_tw} can be combined to show that
the tree-width of twin-width-\(2\) graphs is linearly bounded in the size of their largest semi-induced biclique.
Because bicliques themselves have twin-width \(0\) and tree-width linear in their order, this result is asymptotically optimal.

We start with the analogue of \Cref{lem:stable:existence_of_highly-connected_path} that every graph containing
a large well-linked set must contain a highly connected red path at some point along the partition sequence.

\begin{lemma}\label{lem:sparse:existence_of_highly-connected_path}
Let \(t\geq 0\) be an integer and \(k\coloneqq 7t-6\).
Let \(G\) be a graph without a \(K_{t,t}\)-subgraph which contains an \(\mm\)-well-linked set \(W\) of size \(11k-5\) and
\((\mathcal{P}_i)_{i\in[n]}\) a \(2\)-partition sequence of \(G\).

Then there exists some \(i\in[n]\) and four parts \(X_1,X_2,X_3,X_4\in\mathcal{P}_i\) such that
\begin{itemize}
\item \(X_1X_2X_3X_4\) forms a red \(4\)-path in this order in \(G/\mathcal{P}_i\),
\item there is no red edge \(X_1X_4\),
\item \(\kappa_{G[X_1\cup X_2\cup X_3\cup X_4]}(X_1,X_4)\geq 2t-1\).
\end{itemize}
\begin{proof}
Let \(i\in[n]\) be minimal such that every part of \(\mathcal{P}_i\) contains at most \(2k-1\) vertices of \(W\).
By minimality, \(\mathcal{P}_i\) contains a part \(Z\) which contains at least \(k\) vertices of \(W\).

For some \(d\in\N\) and \(R\in\{=,<,>,\leq,\geq\}\), let \(N^{R d}_{\red}(Z)\) be the set of parts in \(\mathcal{P}_i\) with the specified
distance from \(Z\) in the red graph of \(G/\mathcal{P}_i\).
By abuse of notation, we also denote the set of vertices of \(G\) contained in those parts by \(N^{R d}_\red(Z)\).

Because the red graph has maximum degree at most \(2\), \(N^{\leq d}_{\red}(Z)\) contains at most \(2d+1\) parts
and \(N^{=d}_{\red}(Z)\) contains at most \(2\) parts.
In particular, \(N^{\leq 2}_{\red}\) consists of at most five parts, which together contain at most \(10k-5\) vertices from \(W\).
Thus, the set \(W_3\coloneqq W\cap\bigcup N^{\geq 3}(Z)\) contains at least \(k\) vertices.

Because \(W\) is well-linked, we thus find a collection \(\mathcal{P}\) of \(k\) vertex-disjoint \(Z\)-\(W_3\)-paths in \(G\).
Now, every part of \(P\in \mathcal{P}_i\) which intersects \(W_3\) is either one of the at most two parts in \(N^{=3}_{\red}(Z)\),
or is homogeneously connected to all parts in \(N^{\leq 2}_{\red}(Z)\).

Because \(|Z|\geq k\) and \(G\) is \(K_{t,t}\)-free, there are at most \(t-1\) vertices homogeneously connected to \(Z\).
Similar arguments for the parts in \(N^{\leq 2}_{\red}(Z)\) (with a case distinction on whether they have size at most \(t-1\) or at least \(t\))
yield that at most \(5(t-1)\) vertex-disjoint paths can leave \(N^{\leq 2}_\red(Z)\) through homogeneous connections to other parts.
But since \(|\mathcal{P}|-5(t-1)=k-5t+5=2t-1\), at least one of the at most \(2\) red edges incident to \(N^{\leq 2}_{red}(Z)\)
must contain at least \(t\) of the \(Z\)-\(W_3\)-paths.
Then, the red \(4\)-path starting with this edge and ending at \(Z\) satisfies the claim.
\end{proof}
\end{lemma}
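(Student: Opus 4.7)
The plan is to mirror the proof of \Cref{lem:stable:existence_of_highly-connected_path}, substituting vertex-connectivity \(\kappa\) for rank-connectivity \(\kappa^{\rk}\) and replacing the rank-subadditivity ``leakage'' bound by a \(K_{t,t}\)-freeness count. The ingredient that makes this substitution work is the \(\mm\)-well-linked hypothesis, which delivers genuine families of vertex-disjoint paths between any two sufficiently large subsets of \(W\).

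I would start exactly as in the stable case: pick the minimal \(i\) so that no part of \(\mathcal{P}_i\) contains more than \(2k-1\) vertices of \(W\), obtaining some part \(Z\) with \(|Z\cap W|\geq k\). The maximum-degree-\(2\) structure of the red graph ensures that \(N^{\leq 2}_{\red}(Z)\) consists of at most \(5\) parts, which together contain at most \(10k-5\) vertices of \(W\); hence \(W_3\coloneqq W\cap\bigcup N^{\geq 3}_{\red}(Z)\) still has size at least \(k\). Applying the \(\mm\)-well-linked condition to the pair \((Z\cap W, W_3)\), I obtain a family \(\mathcal{P}\) of \(k\) pairwise vertex-disjoint \(Z\)-\(W_3\)-paths in \(G\).

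The heart of the argument is to show that many of these paths must cross a single red boundary edge. Each path exits \(N^{\leq 2}_{\red}(Z)\) either through one of the (at most two) red edges between \(N^{=2}_{\red}(Z)\) and \(N^{=3}_{\red}(Z)\), or by traversing a part \(P\in N^{\leq 2}_{\red}(Z)\) at a vertex homogeneously connected to something outside \(P\). I would bound the ``leaking'' paths by a case split on each of the at most \(5\) parts \(P\): if \(|P|\geq t\), then \(K_{t,t}\)-freeness gives at most \(t-1\) vertices outside \(P\) that are adjacent to all of \(P\), bounding the number of disjoint paths leaving via \(P\) by \(t-1\); if \(|P|<t\), then at most \(t-1\) disjoint paths can use \(P\) as an internal vertex at all. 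Summing, at most \(5(t-1)\) paths leak out; with \(k=7t-6\) this leaves \(k-5(t-1)=2t-1\) paths crossing the (at most two) red boundary edges. A short case analysis on the shape of \(N^{=3}_{\red}(Z)\) — a single part, two parts forming a \(7\)-path with \(Z\), or a \(6\)-cycle through \(Z\) — then lets me identify a red \(4\)-path \(X_1X_2X_3X_4\) through which all the remaining paths can be routed, yielding the connectivity bound \(\kappa_{G[X_1\cup X_2\cup X_3\cup X_4]}(X_1,X_4)\geq 2t-1\). The absence of the chord \(X_1X_4\) is automatic, since \(X_4\) sits at red-distance exactly \(3\) from \(X_1=Z\) inside a graph of maximum red degree \(2\).

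The main obstacle I foresee is the final case analysis on \(N^{=3}_{\red}(Z)\): naively pigeonholing the \(2t-1\) non-leaking paths across two boundary edges only guarantees \(\lceil(2t-1)/2\rceil\) on a single edge, which is too weak for the claimed \(2t-1\). Recovering the full bound means I have to argue more carefully that, due to the routing constraints around the small middle \(\{X_2,X_3\}\), one of the two candidate \(4\)-paths in a \(7\)-path or \(6\)-cycle configuration already carries all of the \(2t-1\) disjoint sub-paths — mimicking the redistribution trick used in the rank case via \Cref{lem:properties_of_rk*}, but here phrased in terms of Menger-type vertex cuts. The accounting that yields \(k=7t-6\) (versus \(k=2t+9\) in the stable version) reflects precisely that the leakage bound is now \(5(t-1)\) rather than \(5\), and apart from that bookkeeping everything else is a fairly mechanical translation of the stable proof into the \(\mm\)-well-linked setting.
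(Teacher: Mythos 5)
Your proposal follows essentially the same route as the paper's proof: the same minimal index \(i\) and part \(Z\), the same set \(W_3\), the same use of \(\mm\)-well-linkedness to extract \(k\) vertex-disjoint \(Z\)-\(W_3\)-paths, and the same \(5(t-1)\) leakage bound via the \(K_{t,t}\)-freeness case split on the at most five parts of \(N^{\leq 2}_{\red}(Z)\). The final obstacle you flag is not something you are missing relative to the paper — the paper's own proof also only pigeonholes to obtain at least \(t\) (not \(2t-1\)) of the paths through a single red edge before asserting the claim — so your account matches the published argument, including at its weakest point.
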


Next, we want to show that the red \(4\)-path whose existence we proved in the previous lemma
forces the existence of a large semi-induced half-graph.
\begin{lemma}\label{lem:sparse:highly-connected_path_implies_biclique}
Let \(G\) be a graph with a \(2\)-partition sequence \((\mathcal{P}_i)_{i\leq n}\) of \(G\),
and assume for some \(m\in[n]\), there exists four parts \(X_1,X_2,X_3,X_4\in\mathcal{P}_m\) such that
\begin{itemize}
\item the four parts \(X_1\), \(X_2\), \(X_3\) and \(X_4\) form a red path in this order in the trigraph \(G/\mathcal{P}_m\),
	and there is no red edge \(X_1X_4\),
\item In \(G[X_1\cup X_2\cup X_3\cup X_4]\), there exist \(3t-2\) vertex-disjoint \(X_1\)-\(X_4\)-paths.
\end{itemize}
Then \(G\) contains a semi-induced biclique \(K_{t,t}\).
\begin{proof}
We first argue that unless \(G\) contains a semi-induced \(K_{t,t}\),
no two of the four parts \(X_1\), \(X_2\), \(X_3\) and \(X_4\) are fully connected.
Indeed, because there exist \(2t-1\) vertex-disjoint \(X_1\)-\(X_4\)-paths, the parts \(X_1\) and \(X_2\) both have size at least \(3t-2\).
Thus, they either share no edge or induce a large biclique, in which case we are done. But if \(X_1\) and \(X_4\) share no edge,
then \(X_2\cup X_3\) is a \(X_1\)-\(X_4\)-separator in \(G[X]\) and thus has size at least \(3t-2\geq 2t-1\). This implies that w.l.o.g. \(|X_3|>t\).
But then, \(G[X_1,X_3]\) either induces a large biclique, in which case we are done, or contains no edge. But then, \(X_2\)
is also a \(X_1\)-\(X_4\)-separator, which implies that also \(|X_2|\geq 2t-1\). If \(G[X_2,X_4]\) does not induce a large biclique,
the parts \(X_2\) and \(X_4\) must thus also be disconnected. In particular, this means that all of the vertex-disjoint \(X_1\)-\(X_4\)-paths
pass through the cut \(X_2\)-\(X_3\).

Now, let \(i>m\) be minimal such that there do not exist four parts \(Y_1,Y_2,Y_3,Y_4\in\mathcal{P}_i\)
with the following properties:
\begin{itemize}
\item the four parts \(Y_1\), \(Y_2\), \(Y_3\), and \(Y_4\) form a red path in this order in the trigraph \(G/\mathcal{P}_i\),
\item \(Y_2\subseteq X_2\) and \(Y_3\subseteq X_3\).
\end{itemize}
Note that such an \(i\) exists, because the graph \(G=G/\mathcal{P}_{|G|}\) contains no red edges
and thus surely does not contain a red \(4\)-path.

Because \(i\) is minimal, we know that \(G/\mathcal{P}_{i-1}\) does contain such a red \(4\)-path
\(Y_1Y_2Y_3Y_4\). In \(G/\mathcal{P}_i\), one of these four parts, say \(Y_i\)
splits into two parts \(Y_i^1\) and \(Y_i^2\) such either \(i\in\{2,3\}\) and \(Y_i^1\) and \(Y_i^2\)
are not connected by a red edge, or one of the red neighbors \(Y_j\) of \(Y_i\)
does not share a red edge with either \(Y_i^1\) or \(Y_i^2\).
In the latter case, we say that the red edge \(Y_iY_j\) is broken,
while in the former case, we say that the part \(Y_i\) is broken.

By symmetry, we may assume that either \(Y_3\) or one of the edges \(Y_2Y_3\) or \(Y_3Y_4\) is broken.

If the red edge \(Y_2Y_3\) is broken, then consider the bipartite graph \(G[X_2,X_3]\).
\begin{claim}
The bipartite graph \(G[X_2,X_3]\) admits a \(1\)-partition sequence whose final two parts are \(X_2\) and \(X_3\).
\begin{proof}
Consider the partition sequence induced by the partition sequence \((\mathcal{P}_j)_{m\leq j\in[n]}\) on \(X_2\cup X_3\).
We claim that at every point along this sequence, there is at most one red edge crossing the cut \((X_2,X_3)\).
Indeed, this is clearly true in \(G/\mathcal{P}_j\) for every \(j\geq i\), because there is no red edge
crossing \((X_2,X_3)\) in \(G/\mathcal{P}_i\) and thus also not in any earlier trigraphs.
Moreover, this is clearly true in \(G/\mathcal{P}_m\), where this single red edge is \(X_2X_3\) itself.
Now, assume the claim is true in \(G/\mathcal{P}_j\) for some \(m\leq j<i\). We show that the claim is also true in \(G/\mathcal{P}_{j+1}\). Assume the partition \(\mathcal{P}_{j+1}\) is obtained from \(\mathcal{P}_j\)
by splitting some part \(P\) into two parts \(P_1\) and \(P_2\). If \(P\) is not incident to a red edge crossing the cut
\((X_2,X_3)\), then neither are \(P_1\) or \(P_2\). Thus, assume \(P\) is incident to the single red edge \(PQ\) crossing the cut, which gets split up into the two red edges \(P_1Q\) and \(P_2Q\). But because \(j<i\), the choice of \(i\)
implies that both \(P\) and \(Q\) are incident to a second red edge in \(G/\mathcal{P}_j\).
But this would imply that \(Q\) has red degree \(3\) in \(G/\mathcal{P}_{j+1}\), which is a contradiction.
\end{proof}
\end{claim}
Because we furthermore already argued that all of the \(3t-2\) vertex-disjoint \(X_1\)-\(X_4\)-paths must cross the cut \((X_2,X_3)\),
the graph \(G[X_2,X_3]\) contains a matching of size \(3t-2\) and thus a semi-induced \(K_{t,t}\) by \Cref{lem:partial_half-graph_has_matchings<=2biclique}.

Thus, we are left with the case that the part \(Y_3\) or the edge \(Y_3Y_4\) is broken.
In the former case, assume \(Y_2\) is incident via a red edge to \(Y_3^1\) but not \(Y_3^2\).
Then, consider instead the bipartite graph \(G[X_2,X_3\setminus Y_3^1]\). Because there is no longer
a red edge crossing the cut \((X_2,X_3\setminus Y_3^1)\) in \(G/\mathcal{P}_i\),
we can argue just as in the previous claim that there exists a \(1\)-partition sequence
of \(G[X_2,X_3\setminus Y_3^1]\) whose final two parts are \(X_2\) and \(X_3\setminus Y_3^1\).
Then, because \(Y_3^1\) is not incident via a red edge to any part within \(X_3\setminus Y_3^1\)
in \(G/\mathcal{P}_i\), at most \(t-1\) vertex-disjoint \(X_1\)-\(X_4\)-paths can pass through \(Y_3^1\).
Thus, we find a matching of size \(2t-1\) in \(G[X_2,X_3\setminus Y_3^1]\) and thus
a semi-induced \(K_{t,t}\) by \Cref{lem:partial_half-graph_has_matchings<=2biclique}.

Finally, if the edge \(Y_3Y_4\) is broken, we consider the bipartite graph \(G[X_2,X_3\setminus Y_3]\).
By the same reasoning as before, this graph has twin-width \(1\) and contains a matching of size \(2t-1\)
and thus a semi-induced \(K_{t,t}\) by \Cref{lem:partial_half-graph_has_matchings<=2biclique}.
\end{proof}
\end{lemma}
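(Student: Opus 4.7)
The plan is to mirror the proof of \Cref{lem:stable:highly-connected_path_implies_half-graph}, replacing rank-connectivity arguments with arguments about vertex-disjoint paths and applying \Cref{lem:partial_half-graph_has_matchings<=2biclique} at the end rather than \Cref{lem:partial_half-graph_has_rank=ladder-index}.

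First I would handle a preprocessing step: unless \(G\) already contains a semi-induced \(K_{t,t}\), no two of the parts \(X_1, X_2, X_3, X_4\) can be fully connected. The \(3t-2\) vertex-disjoint paths force \(|X_1|, |X_4| \geq 3t-2\); since \(X_2 \cup X_3\) is an \(X_1\)\nobreakdash-\(X_4\)\nobreakdash-separator, it has total size at least \(3t-2\), and a case distinction on which of \(X_2, X_3\) is large together with the red-path structure shows both have size at least \(t\). Any fully-connected pair among the \(X_i\) would therefore immediately exhibit a semi-induced \(K_{t,t}\). Under this assumption, all \(3t-2\) vertex-disjoint \(X_1\)\nobreakdash-\(X_4\)-paths must cross the bipartite cut \((X_2, X_3)\).

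Next, I would let \(i > m\) be minimal such that \(G/\mathcal{P}_i\) contains no red \(4\)-path \(Y_1 Y_2 Y_3 Y_4\) with \(Y_2 \subseteq X_2\) and \(Y_3 \subseteq X_3\); such \(i\) exists because \(G = G/\mathcal{P}_{|G|}\) has no red edges. Exactly as in the claim used in the proof of \Cref{lem:stable:highly-connected_path_implies_half-graph}, at every step between \(m\) and \(i-1\) there is a unique red edge crossing \((X_2, X_3)\), lying at the center of some red \(4\)-path. At step \(i\) either a part breaks (a part \(Y_j\) splits into two pieces with no red edge between them) or a red edge breaks (the endpoints get separated into pieces, one of which is no longer red-adjacent to the other endpoint of the edge). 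Symmetry reduces to the three cases: the edge \(Y_2 Y_3\) breaks, the edge \(Y_3 Y_4\) breaks, or the part \(Y_3\) breaks.

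In each case I would focus on a bipartite sub-cut of \((X_2, X_3)\): the full cut \(G[X_2, X_3]\) when \(Y_2 Y_3\) breaks, and \(G[X_2, X_3 \setminus Y_3]\) or \(G[X_2, X_3 \setminus Y_3^1]\) in the other two cases. In each case no red edge crosses the chosen sub-cut in \(G/\mathcal{P}_i\) or afterwards, and at most one red edge crossed it before, so the partition sequence restricted to these two sides is a \(1\)-partition sequence; by \Cref{lem:tww1:red_cuts} the induced bipartite graph is a partial half-graph. Finally I count surviving paths: in the first case all \(3t-2\) paths remain, while in the others at most \(t-1\) paths can be destroyed by removing \(Y_3\) or \(Y_3^1\) because that piece is homogeneously connected to the rest of \(X_3\) and, by the \(K_{t,t}\)-free assumption, can absorb at most \(t-1\) disjoint paths. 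This leaves a matching of size at least \(2t-1\) in the bipartite sub-cut, so \Cref{lem:partial_half-graph_has_matchings<=2biclique} produces a semi-induced \(K_{t,t}\).

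The main obstacle I expect is the bookkeeping around the preprocessing: establishing the minimum sizes of \(X_2\) and \(X_3\), then carefully tracking how many of the \(3t-2\) paths can be lost to each homogeneous connection or to a removed piece like \(Y_3^1\). In the rank-width version of this argument such losses contributed only small additive rank-terms, whereas here each homogeneous connection must be ruled out as an actual semi-induced biclique, which tightens the constants and requires the preprocessing to be done before invoking the partition-sequence analysis.
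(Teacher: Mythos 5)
Your proposal follows the paper's proof essentially step for step: the same preprocessing showing no two parts are fully connected (so all paths cross the \((X_2,X_3)\) cut), the same minimal index \(i\) and case analysis on which edge or part breaks, the same \(1\)-partition-sequence claim yielding a partial half-graph, and the same path-counting to obtain a matching of size \(2t-1\) before invoking \Cref{lem:partial_half-graph_has_matchings<=2biclique}. The approach and the constants are correct and match the paper's argument.
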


Combining the previous two lemmas yields the desired sharpening of the result of \cite{sparse_tww2_bounded_tw},
completely analogously to the proof of \Cref{thm:stable:tww2->bounded_rw}.
\begin{corollary}
Every graph \(G\) with twin-width at most \(2\) which does not contain \(K_{t,t}\) as a subgraph has tree-width at most \(231(t-1)+6\).
\end{corollary}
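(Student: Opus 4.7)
The plan is to prove this corollary by following exactly the same three-step template as the proof of \Cref{thm:stable:tww2->bounded_rw}, but substituting the matching-based tools developed in the $K_{t,t}$-free setting for the rank-based ones. Namely, I would argue by contradiction: suppose $G$ has twin-width at most $2$, contains no $K_{t,t}$-subgraph, and yet has tree-width strictly greater than $231(t-1)+6$. By the $\mm$-well-linked-set characterization of tree-width from \cite{well-linked_sets} (stated in \Cref{sec:preliminaries}), $G$ then contains an $\mm$-well-linked set $W$ of size $231(t-1)+6 = 231t-225$.

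Next, I would invoke \Cref{lem:sparse:existence_of_highly-connected_path} with its parameter instantiated as $t' \coloneqq 3t-2$. Under this substitution the lemma's internal constant becomes $k' = 7t'-6 = 21t-20$, and the required well-linked-set size is $11k'-5 = 231t-225$, which matches $|W|$ exactly. The lemma then produces an index $i$ and four parts $X_1,X_2,X_3,X_4 \in \mathcal{P}_i$ forming a red $4$-path in that order, without the red edge $X_1X_4$, and satisfying $\kappa_{G[X]}(X_1,X_4) \geq 2t'-1 = 6t-5$, where $X \coloneqq X_1 \cup X_2 \cup X_3 \cup X_4$. By Menger's theorem this connectivity bound supplies at least $6t-5 \geq 3t-2$ vertex-disjoint $X_1$-$X_4$-paths inside $G[X]$ (for every $t \geq 1$).

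The hypotheses of \Cref{lem:sparse:highly-connected_path_implies_biclique} (with parameter $t$) are thus met, and the lemma produces a semi-induced $K_{t,t}$ in $G$, which in particular embeds as a $K_{t,t}$-subgraph. This contradicts the $K_{t,t}$-subgraph-freeness of $G$, completing the argument. The reasoning is essentially bookkeeping once the two sparse lemmas are in place; the one step I would verify with care is the parameter substitution $t' = 3t-2$, which must simultaneously match the advertised well-linked-set size $231(t-1)+6$ and furnish enough vertex-disjoint paths to clear the $3t-2$ threshold demanded by \Cref{lem:sparse:highly-connected_path_implies_biclique}.
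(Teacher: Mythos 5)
Your proposal is correct and follows exactly the route the paper intends: extract an $\mm$-well-linked set of size $231(t-1)+6=11(7(3t-2)-6)-5$ from the assumed large tree-width, apply \Cref{lem:sparse:existence_of_highly-connected_path} with parameter $3t-2$ (noting that $K_{t,t}$-freeness implies $K_{3t-2,3t-2}$-freeness), and feed the resulting $\geq 3t-2$ vertex-disjoint $X_1$-$X_4$-paths into \Cref{lem:sparse:highly-connected_path_implies_biclique} to contradict $K_{t,t}$-freeness. The parameter bookkeeping checks out and matches the stated bound exactly.
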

\end{appendices}

\end{document}